\newtheorem{thm}{Theorem}[section]
\newtheorem{lem}[thm]{Lemma}
\newtheorem{cor}[thm]{Corollary}
\newtheorem{defi}{Definition}
\newcommand{\cbar}{\widehat{\mathbb{C}}}
\newcommand{\ra}{\rightarrow}
\newcommand{\pa}{\partial}
\newcommand{\sm}{\setminus}
\newcommand{\wh}{\widehat}
\newcommand{\ol}{\overline}
\newcommand{\mc}{\mathcal}
\newcommand{\wt}{\widetilde}
\newcommand{\G}{\Gamma}
\newcommand{\g}{\gamma}
\newcommand{\FFF}{\mathcal{F}}
\newcommand{\JJJ}{\mathcal{J}}
\newcommand{\PPP}{\mathcal{P}}
\newcommand{\QQQ}{\mathcal{Q}}
\newcommand{\AAA}{\mathcal{A}}
\newcommand{\CCC}{\mathcal{C}}
\newcommand{\UUU}{\mathcal{U}}
\newcommand{\NNN}{\mathcal{N}}
\newcommand{\CC}{\mathscr{C}}
\newcommand{\UU}{\mathcal{U}}
\newcommand{\DD}{\mathscr{D}}
\newcommand{\OO}{\mathscr{O}}
\theoremstyle{definition}
\makeatletter\@addtoreset{equation}{section}\makeatother 
\titleformat{\section}{\large}{\textbf{\thesection.}}{1em}{\textbf}
\titleformat{\subsection}{\normalsize}{\textbf{\thesubsection.}}{1em}{\textbf}
\titleformat{\subsubsection}{\normalsize}{\thesubsubsection.}{1em}{\textbf}
\titlespacing*{\subsubsection}{0pt}{*0.5}{*0.5}
\begin{document}
\date{}
\title{Decomposition of rational maps \\by stable multicurves}
\author{Guizhen Cui}
\address{Guizhen Cui, School of Mathematical Sciences, Shenzhen University, Shenzhen, 518052, P. R. China, \& Academy of Mathematics and Systems Science Chinese Academy of Sciences, Beijing, 100190, P. R. China}
\email{gzcui@szu.edu.cn}
\author{Fei Yang}
\address{Fei Yang, School of Mathematics, Nanjing University, Nanjing, 210093, P. R. China}
\email{yangfei@nju.edu.cn}
\author{Luxian Yang}
\address{Luxian Yang, School of Mathematical Sciences, Shenzhen University, Shenzhen, 518052, P. R. China}
\email{lxyang@szu.edu.cn}
	
\thanks{The first author is supported by National Key R\&D Program of China No.2021YFA1003203, and NSFC grants No.12131016 and No.12071303. The second author is supported by NSFC grant No.12222107. The third author is supported by Basic and Applied Basic Research Foundation of Guangdong Province Grants No.2023A1515010058.}
\subjclass[2000]{}

\begin{abstract}
A completely stable multicurve of a post-critically finite rational map induces a combinatorial decomposition. The projections of the small Julia sets are immersed within the original Julia set. We prove that two small Julia sets are disjoint if and only if they are separated by a coiling curve. Furthermore, we prove that a post-critically finite rational map with a coiling curve is renormalizable. 

Using a similar argument, we give a sufficient condition for a Fatou domain to qualify as a Jordan domain. By tuning polynomails in such a Fatou domain, we provide examples of post-critically finite rational maps with coiling curves. 
\end{abstract}
\maketitle

\section{Introduction}
Stable multicurves were introduced by Thurston in his work on characterizing post-critically finite rational maps \cite{douady1993proof}. For a post-critically finite branched covering, a stable multicurve is a Thurston obstruction if it leads to a matrix whose maximal eigenvalue is at least one (see \S \ref{sec:thurston} for details). Thurston proved that a post-critically finite branched covering map with hyperbolic orbifold is Thurston equivalent to a rational map if and only if it has no Thurston obstructions. 


Pilgrim throughly studied the decomposition and combination of post-critically finite branched coverings of the Riemann sphere using stable multicurves \cite{pilgrim2003combinations}, leading to a canonical decomposition theorem. Furthermore, he conjectured that this canonical decomposition is determined by the canonical Thurston obstruction, whose existence is equivalent to the existence of Thurston obstructions. Selinger subsequently provided a positive answer to this conjecture \cite{selinger2012thurston}. 

In this paper, we focus on the decomposition of post-critically finite rational maps. These maps have no Thurston obstructions, implying that the canonical Thurston obstruction is an empty set of curves. When a map $f$ possesses a stable multicurve, we can cut the sphere along the multicurve and define a dynamics on the collection of the pieces naturally. The first-return map on a periodic piece can be extended to a new post-critically finite branched covering that also has no Thurston obstructions. So a periodic piece induces a rational map. A natural question is to explore the relationships between the dynamics on these pieces. Before stating our results, we outline some basic concepts. 

Let $F$ be a branched covering of the Riemann sphere $\cbar$, with the assumption that its degree $\deg F$ is greater than $1$. A point $z\in\cbar$ is a {\bf critical point} of $F$ if $F$ is not injective in any neighborhood of $z$. We denote by $\CCC_F$ the set of critical points of $F$. The {\bf post-critical set} of $F$ is 
$$
\PPP_F=\ol{\bigcup_{n \geq 1}F^n(\CCC_F)}.
$$
The map $F$ is {\bf post-critically finite (PCF)} if $\PPP_F$ is a finite set. By a {\bf marked} branched covering $(F,\PPP)$ we mean a PCF branched covering $F$ of $\cbar$ with a finite marked set $\PPP\subset\cbar$ such that $\PPP_F\cup F(\PPP)\subset\PPP$. In the case where $\PPP=\PPP_F$, we simplify the notation by writing $(F,\PPP)=F$. 

A Jordan curve $\g\subset\cbar\sm\PPP$ is {\bf essential} if each of the two components of $\cbar\sm\g$ contains at least two points in $\PPP$. A {\bf multicurve} $\G$ is a non-empty, finite collection of essential Jordan curves that are pairwise disjoint and pairwise non-isotopic rel $\PPP$. A multicurve $\G$ is {\bf stable} if each essential curve in $F^{-1}(\G)$ is isotopic rel $\PPP$ to a curve in $\G$; or is {\bf pre-stable} if each curve in $\G$ is isotopic rel $\PPP$ to a curve in $F^{-1}(\G)$; or is {\bf completely stable} if it is both stable and pre-stable.

Let $f$ be a PCF rational map of degree $\deg f\geq 2$. Beside being viewed as a branched covering, it induces dynamical systems on $\cbar$ by iteration. The Fatou set $\FFF_f$ and the Julia set $\JJJ_f$ are defined based on whether the iterating sequence $\{f^n\}_{n\geq 1}$ forms a normal family in a neighborhood of the point or not (see e.g. \cite{beardon1991iteration, milnor2006dynamics} for their definitions and basic properties). By definition, $f^{-1}(\FFF_f)=\FFF_f$ and $f^{-1}(\JJJ_f)=\JJJ_f$, leading to a decomposition of $\cbar$ into two completely invariant sets. 

Every completely stable multicurve provides a combinatorial decomposition of $f$ in the following manner. For any completely stable multicurve $\G$ of $f$, denote by $\UU_{\G}$ the collection of the components of $\,\cbar\sm\G$. The map $f$ induces a map $f_{\G}$ on $\UU_{\G}$, and each $U\in\UU_{\G}$ is eventually periodic under $f_{\G}$ (refer to \S \ref{sec:combinatorial}). For periodic components of $f_{\G}$, we obtain the following result.

\begin{thm}\label{thm:combinatorial}
Let $f$ be a PCF rational map and $\,\G$ be a completely stable multicurve of $f$. Suppose $U\in\UU_{\G}$ is periodic with period $p\geq 1$. Then 
\begin{itemize}
\item[(1)] there exists a PCF rational map $g$ and two orientation-preserving homeomorphisms $\varphi_0,\varphi_1$ of $\,\cbar$ such that 
\begin{itemize}
\item[(a)] $\varphi_1$ is isotopic to $\varphi_0$ rel $\PPP:=(\PPP_f\cap U)\cup\PPP_U$, where $\PPP_U$ is a finite subset of $\,\cbar\sm U$ such that $\PPP_U\cap D$ is a singleton for each component $D$ of $\cbar\sm U$; 
\item[(b)] $\varphi_0\circ f^p=g\circ\varphi_1$ on $U^p$, where $U^p$ is the unique component of $f^{-p}(U)$ parallel to $U$ rel $\PPP$. 
\end{itemize}
Moreover, the map $g$ is unique up to holomorphic conjugation. 
\item[(2)] there exists a continuous map $\pi:\JJJ_g\to\JJJ_f$ such that $f^p\circ\pi=\pi\circ g$ on $\JJJ_g$.
\end{itemize}
\end{thm}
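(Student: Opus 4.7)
The plan for (1) is to extend the first-return map $f^p:U^p\to U$ to a PCF marked branched covering $(F,\PPP)$ of $\cbar$ and then apply Thurston's characterization theorem to produce the rational map $g$. I take $\PPP=(\PPP_f\cap U)\cup\PPP_U$, where $\PPP_U$ contains one representative point in each component of $\cbar\sm U$, chosen so that the forward orbit structure is compatible with the extension $F$. Parallelism of $U^p$ to $U$ rel $\PPP$ provides an ambient isotopy carrying $\pa U^p$ to $\pa U$; the endpoints of this isotopy will be the homeomorphisms $\varphi_0$ and $\varphi_1$. I set $F=f^p$ on $U^p$ and extend $F$ on each component of $\cbar\sm U^p$ as a branched covering onto its paired component of $\cbar\sm U$, matching boundary degrees and placing all extra critical values at the marked points of $\PPP_U$. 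The resulting $(F,\PPP)$ is PCF and satisfies $\varphi_0\circ f^p=F\circ\varphi_1$ on $U^p$.

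The next step is to verify that $(F,\PPP)$ has hyperbolic orbifold (automatic from that of $f$) and no Thurston obstruction. For the latter, I would argue by contradiction: given a putative obstruction $\G'$ of $(F,\PPP)$, transport its curves back into $U\subset\cbar$ (they can be chosen to avoid the collapsed complementary regions) and spread them around the $f$-orbit $\{f^j(U)\}_{j=0}^{p-1}$ to form a multicurve $\wh{\G}$ in $\cbar\sm\PPP_f$, possibly enlarged by relevant curves from $\G$. A block-matrix analysis of the $f$-Thurston matrix for $\wh\G$, combined with the observation that the $F$-action on $\G'$ mirrors the $f^p$-action on the subset of $\wh\G$ lying inside $U$, should show that $\wh\G$ carries a leading eigenvalue $\geq 1$, contradicting Thurston's theorem for the rational map $f$. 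Applying Thurston's theorem to $(F,\PPP)$ then produces a rational map $g$ equivalent to $F$, and rigidity of the Thurston class yields uniqueness up to holomorphic conjugation.

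For part (2), I would construct $\pi$ via iterated pullback. Define homeomorphisms $\varphi_n$ inductively by $\varphi_{n-1}\circ f^p=g\circ\varphi_n$ on the depth-$n$ parallel preimage of $U^p$, normalized to remain in the correct isotopy class rel $\PPP$. Off the Julia set the sequence $\{\varphi_n\}$ typically does not converge, but on nested neighborhoods shrinking to $\JJJ_g$ their images shrink to $\JJJ_f$ by the standard expansion/shrinking property of PCF rational maps on their Julia sets. Passing to the limit defines a continuous map $\pi:\JJJ_g\to\JJJ_f$, and the semiconjugacy $f^p\circ\pi=\pi\circ g$ follows by taking limits in the defining relations.

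The main obstacle I anticipate is the no-obstruction argument in the second paragraph: rigorously linking a putative Thurston obstruction of the auxiliary cover $(F,\PPP)$ back to one of $f$ requires careful bookkeeping of how essential curves lift through $f^j$ for $0\le j\le p$, and a precise spectral comparison between the $F$-transition matrix on $\G'$ and the corresponding sub-block of the $f$-transition matrix on $\wh\G$. Complete stability of $\G$ will be the key input making this comparison clean, since it guarantees that curves introduced in intermediate components $f^j(U)$ do not spawn new essential isotopy classes outside $\G\cup\wh\G$.
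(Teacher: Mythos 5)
Your overall architecture for part (1) — extend $f^p|_{U^p}$ to a marked branched covering of $\cbar$ with the extra critical values confined to the single marked points $\PPP_U$, verify no obstruction, then invoke Thurston rigidity — matches the paper's Steps 1--4. Where you diverge is in the no-obstruction verification, and the divergence is to your disadvantage. You propose to transport a putative obstruction $\G'$ of $(F,\PPP)$ back to $\cbar\sm\PPP_f$, spread its curves through the orbit $\{f^j(U)\}_{j<p}$, enlarge by curves of $\G$, and run a block-matrix spectral comparison. This is the Pilgrim-style combination bookkeeping, and as you yourself flag, it is the hardest step in your sketch: one must control how lifts under the intermediate $f^j$ mix with curves of $\G$, prove the enlarged collection is a multicurve, and show the block structure truly forces $\lambda\geq 1$. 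The paper sidesteps all of this with a one-line observation: any multicurve $\G_1$ of the extended map can be taken inside $W$ (since each complementary disk contains a single marked point); for curves inside $W$, isotopy rel the new marked set agrees with isotopy rel $\PPP_f$; and the extended map equals $F^p$ on the region that matters. Hence the $\G_1$-transition matrix under $(G,\widetilde{\PPP})$ is dominated entrywise by the $\G_1$-transition matrix under $F^p$ rel $\PPP_f$, whose leading eigenvalue is $<1$ because $F^p$ is Thurston equivalent to $f^p$. No orbit-spreading, no block analysis. If you pursue your route you will need to prove several nontrivial intermediate lemmas; the paper's route closes the argument immediately.

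For part (2) there is a genuine gap. You propose to iterate the pullback $\varphi_{n-1}\circ f^p=g\circ\varphi_n$ and pass to a limit on $\JJJ_g$, citing ``the standard expansion/shrinking property of PCF rational maps on their Julia sets.'' This does not by itself produce a continuous map $\pi:\JJJ_g\to\JJJ_f$: the sequence $\varphi_n$ (or $\varphi_n^{-1}$) need not converge pointwise without a uniform-convergence mechanism, and expansion of $f^p$ on $\JJJ_f$ alone does not control the images of neighborhoods of $\JJJ_g$ under the non-holomorphic conjugating maps. What is actually needed is the Rees--Shishikura semiconjugacy theorem (Theorem~\ref{thm:semiconjugacy} in the paper), which requires the branched covering to be genuinely holomorphic on a neighborhood of its critical cycles. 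The paper therefore builds a new branched covering $\wt F$ on all of $\cbar$, Thurston equivalent to $f^p$, that literally equals $\phi_1^{-1}\circ g\circ\phi_1$ near $\phi_1^{-1}(\JJJ_g)$; the theorem then provides a uniformly convergent sequence of conjugators whose limit $h$ is a continuous semiconjugacy, and $\pi=h\circ\phi_1^{-1}$ works. Your sketch neither constructs a globally defined covering equivalent to $f^p$, nor makes it holomorphic where needed, nor cites a convergence theorem; as written it would not compile into a proof. You should explicitly recover $\wt F$ and invoke the Rees--Shishikura theorem (or reprove its relevant part), rather than appealing to expansion alone.
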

For a connected set $U\subset\cbar\sm\PPP$, a complementary component of $U$ is {\bf simple type} if it intersects $\PPP$ at no more than one point. Let $\wh{U}$ be the union of $U$ with its simple type complementary components. Two sets $U$ and $V$ are said to be {\bf parallel} to each other rel $\PPP$ if $\,\wh{U}$ is isotopic to $\wh{V}$ rel $\PPP$. The rational map $g$ in Thoerem \ref{thm:combinatorial} is called the { \bf combinatorial renormalization} of $f$ on $U$. 

For each periodic $U_i\in\UU_{\G}$, by Theorem \ref{thm:combinatorial}, we obtain a rational map $g_i$ and a continuous map $\pi_i:\JJJ_{g_i}\to\JJJ_f$. Denote by $\JJJ_i=\pi_i(\JJJ_{g_i})$. If $U_i\in\UU_{\G}$ is eventually periodic but not periodic, let $k_i\geq 1$ be the minimal integer such that $U_j:=f_{\G}^{k_i}(U_i)$ is periodic. We associate to $U_i$ the preimage of $f^{-k_i}(\JJJ_j)$ that is parallel to $U_i$ and denote it by $\JJJ_{i}$. Thus each $U_i\in\UU_{\G}$ induces a subset $\JJJ_{i}$ of $\JJJ_f$. A natural question is how the sets $\JJJ_{i}$ relate to each other and to the original Julia set. Specifically, when is $\JJJ_1\cap\JJJ_2$ an empty set and when is the map $\pi$ injective?
\vskip 0.24cm

 The following concept is introduced precisely for solving the above question. For any $\g\in\G$ and any $n\ge 1$, denote 
$$
\CC_n(\g,\G):=\{\text{the components of }f^{-n}(\G) \text{ which are isotopic to }\g \text{ rel }\PPP_f\}
$$ and
$$
\kappa_n(\g,\G)=\#\CC_n(\g,\G).
$$
\begin{defi}\label{def:coiling}
A curve $\g\in\G$ is a {\bf coiling curve of $\,\G$} or is {\bf coiling in $\G$} if $\kappa_n(\g,\G)\to\infty$ as $n\to\infty$. 
\end{defi}


A stable multicurve $\G$ of $f$ is a {\bf Cantor multicurve} if every curve $\g\in\G$ is coiling in $\G$. For its properties, we refer to \S 2 of \cite{cui2016renormalizations}. If $\G$ is a Cantor multicurve, then for each piece $U_i\in\UU_{\G}$, the map $\pi_i:\JJJ_{g_i}\to\JJJ_f$ is injective, and $\JJJ_1\cap\JJJ_2=\emptyset$ for any distinct pieces $U_1,U_2\in\UU_{\G}$ \cite{cui2016renormalizations}. The fact that every curve in $\G$ is a coiling curve of $\G$ is crucial for establishing these results. Building on the partially similar idea in \cite{cui2016renormalizations}, we prove the following theorem which implies that coiling curves partition the Julia set into disjoint pieces. 

\begin{thm}\label{thm:disjoint}
Let $f$ be a PCF rational map and $\,\G$ be a completely stable multicurve of $f$. Then there is a finite marked set $\PPP$ containing $\PPP_f$ with $f(\PPP)\subset\PPP$ and a completely stable multicurve $\G_{\PPP}$ of $(f,\PPP)$, whose isotopy classes include those of $\G$ rel $\PPP$,  such that for any $U_1\neq U_2\in\UU_{\G_{\PPP}}$ with a common boundary $\g\in\G_{\PPP}$,  $\JJJ_1\cap\JJJ_2=\emptyset$ if and only if $\g$ is a coiling curve of $\,\G_{\PPP}$. 
\end{thm}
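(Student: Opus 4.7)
My plan is to first construct the enlarged marked set $\PPP$ and multicurve $\Gamma_\PPP$, and then prove the two directions of the equivalence separately.

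For the construction, I would start from $(\PPP_f,\Gamma)$ and enlarge $\PPP$ by adjoining a marked point in each periodic Fatou domain that becomes a simple-type complementary component of some periodic piece of $\UU_\Gamma$, choosing points on superattracting cycles so that $\PPP$ is finite and forward-invariant under $f$ and contains the auxiliary marked sets $\PPP_U$ appearing in Theorem~\ref{thm:combinatorial}. I would then take $\Gamma_\PPP$ to be the set of isotopy classes rel $\PPP$ of the essential components of $\bigcup_{n\ge 0} f^{-n}(\Gamma)$. Because the number of isotopy classes of essential curves rel a finite set is finite, this yields finitely many classes, and, after discarding any initial curves that lose pre-stability under the richer marking, it produces a completely stable multicurve $\Gamma_\PPP$ for $(f,\PPP)$ whose isotopy classes include those of $\Gamma$.

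For the coiling direction, suppose $\gamma$ is the common boundary of $U_1,U_2\in\UU_{\Gamma_\PPP}$ and $\kappa_n(\gamma,\Gamma_\PPP)\to\infty$. Fix a thin annular neighborhood $A$ of $\gamma$ disjoint from $\PPP$ and from the other curves of $\Gamma_\PPP$. For each $n$, the preimage $f^{-n}(A)$ contains at least $\kappa_n(\gamma,\Gamma_\PPP)$ disjoint annular components, each a neighborhood of a curve in $\CC_n(\gamma,\Gamma_\PPP)$. Pairwise disjoint curves in a common isotopy class rel $\PPP$ are nested, so these components form nested annular separations of $\ol{U}_1$ from $\ol{U}_2$, and their number tends to infinity. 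A standard compactness argument then rules out any common point of $\JJJ_1=\pi_1(\JJJ_{g_1}) \subset \ol{U}_1$ and $\JJJ_2=\pi_2(\JJJ_{g_2}) \subset \ol{U}_2$, for such a point would be separated from itself by all but finitely many of these nested annuli.

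For the non-coiling direction, assume $\kappa_n(\gamma,\Gamma_\PPP)$ is bounded. Encoding the preimage combinatorics by the non-negative integer matrix $M$ with $M_{\delta\delta'} = \#\{\text{components of }f^{-1}(\delta')\text{ isotopic to }\delta\text{ rel }\PPP\}$, boundedness of the $\gamma$-row sums of $M^n$ forces the preimage chain starting from $\gamma$ to be eventually deterministic, giving at every large $n$ a unique preimage curve $\gamma_n \subset f^{-n}(\Gamma_\PPP)$ in the relevant isotopy class together with unique $U_i$-parallel lifts $U_i^{(n)}$ of $U_1,U_2$ meeting along $\gamma_n$. The nested intersection $\bigcap_n \ol{U_1^{(n)}} \cap \ol{U_2^{(n)}}$ is then a non-empty compact subset of $\JJJ_f$, and tracing any point in it back through the semiconjugacies $\pi_i$ supplied by Theorem~\ref{thm:combinatorial} places it simultaneously in $\JJJ_1$ and $\JJJ_2$.

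The hardest step will be the non-coiling direction. The coiling implication is essentially a local version of the Cantor multicurve argument from \cite{cui2016renormalizations}, so its ingredients are in hand. By contrast, manufacturing an actual common point of $\JJJ_1$ and $\JJJ_2$ from bounded preimage combinatorics requires a careful boundary analysis of the continuous maps $\pi_i$, and this is precisely where the marked-set enlargement pays off: the extra marked points in $\PPP$ guarantee that every preimage of $\gamma$ encountered in the deterministic chain remains isotopically non-trivial, so the limiting boundary point is not washed out when pushed through $\pi_1$ and $\pi_2$.
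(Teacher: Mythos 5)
Your proposal has genuine gaps in all three parts.

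\textbf{Construction of $\PPP$ and $\G_\PPP$.} The paper's construction is driven by the following dichotomy, which your construction does not visibly achieve: after enlarging the marked set, every $\g\in\G_\PPP$ satisfies \emph{either} $\kappa_n(\g,\G_\PPP)=1$ for all $n\ge 1$ \emph{or} $\g$ is coiling. To get this, the paper first computes an integer $N$ (using Lemmas~\ref{lem:pre-periodic} and~\ref{lem:unique}) so that $\kappa_n(\g,\G)$ stabilizes for non-coiling $\g$ once $n\ge N$, sets $\G_\PPP$ to be the essential curves of $f^{-N}(\G)$, and places one new marked point in each annulus bounded by a pair of $\PPP_f$-isotopic curves of $f^{-N}(\G)$ — precisely so that those curves become non-isotopic rel $\PPP$, turning $f^{-N}(\G)$ into a genuine multicurve and collapsing the multiplicities $\kappa_n$ to $1$ in the non-coiling case. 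Your marked points (superattracting points in the periodic Fatou domains serving as $\PPP_U$) do not disambiguate these finite multiplicities, and ``discarding initial curves that lose pre-stability'' is not a construction. Without the dichotomy, the necessity direction already stalls.

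\textbf{Sufficiency.} Your argument quietly assumes $\JJJ_i=\pi_i(\JJJ_{g_i})\subset\ol{U_i}$ and then says nested annuli separate $\ol{U_1}$ from $\ol{U_2}$. Neither step is available for free. First, $\ol{U_1}\cap\ol{U_2}=\g\neq\emptyset$, so nested annuli inside a fixed annulus $A\supset\g$ do not separate the closures; the moduli of the annular components of $f^{-n}(A)$ shrink, and without a lower modulus bound you cannot conclude. Second, and more fundamentally, the containment $\JJJ_i\subset\ol{U_i}$ is essentially equivalent to what is being proved: $\pi_i$ is built from the Rees--Shishikura semiconjugacy $h$ (Theorem~\ref{thm:semiconjugacy}), whose fibers $h^{-1}(z)$ are full continua that could a priori straddle the annulus $A(\g)$. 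The substantive content of the paper's proof is Lemma~\ref{lem:empty} (via the topological counting Lemma~\ref{lem:top}), showing that no fiber of $h$ crosses an annulus $A(\beta)$ for $\beta$ in the coiling submulticurve $\G_\g$; only then does $\JJJ_1\cap\JJJ_2=\emptyset$ follow. Your ``standard compactness argument'' is standing in for precisely this lemma without supplying it.

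\textbf{Necessity.} Manufacturing a point of $\JJJ_1\cap\JJJ_2$ by intersecting $\ol{U_1^{(n)}}\cap\ol{U_2^{(n)}}$ and ``tracing it back through $\pi_i$'' does not work: the maps $\pi_i$ go from $\JJJ_{g_i}$ into $\JJJ_f$, they are not surjective onto $\ol{U_i}\cap\JJJ_f$, and they are not invertible, so there is no tracing back. The paper instead argues by contradiction with a modulus estimate: if $\g$ is non-coiling, the dichotomy gives $\kappa_n(\g,\G_\PPP)=1$, and (after passing to the periodic case) the deformation $F$ has $F(A(\g))=A(\g)$ with $\deg F|_{A(\g)}\ge 2$ (else one gets a Levy cycle). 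If $\JJJ_1\cap\JJJ_2=\emptyset$, the image $h(A)$ of the intermediate annulus would be an annulus of positive modulus with $f^m(h(A))=h(A)$ and covering degree $\ge 2$, which is impossible. This is the argument you would need to recover; the nested-intersection idea by itself produces a point of $\JJJ_f$, not a point witnessed by both $\pi_1$ and $\pi_2$.
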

Here we give some explaination of the marked set $\PPP$ and the multicurve $\G_{\PPP}$. A curve $\g\in\G$ is {\bf periodic} if there is an integer $p\geq 1$ such that a component $\g^p$ of $f^{-p}(\g)$ is isotopic to $\g$ rel $\PPP_f$. There is an important fact that if a periodic curve $\g\in\G$ is not coiling, then $\kappa_n(\g,\G)=1$ for all $n\geq 1$ (Lemma \ref{lem:unique}). For curves in $\G$ that are neither periodic nor coiling in $\G$, there is an integer $N$, such that $\kappa_n(\g,\G)\leq N$ for all $n\geq 1$. So by suitably adding a finite number of marked points (see \S \ref{sec:proof1.2}), we can get a marked set $\PPP\supset\PPP_f$ and a new multicurve $\G_{\PPP}$ of $(f,\PPP)$ such that
\begin{itemize}
\item[(1)] either $\kappa_n(\g,\G_{\PPP})=1$ for all $n\geq 1$ or $\g$ is a coiling curve in $\G_{\PPP}$. 
\item[(2)] the isotopy classes of $\G_{\PPP}$ rel $\PPP$ contains the isotopy classes of $\G$ rel $\PPP$,
\item[(3)] the isotopy classes of coiling curves of $\G_{\PPP}$ rel $\PPP_f$ equal the isotopy classes of coiling curves of $\G$ rel $\PPP_f$. 
\end{itemize}
So the partition of $\JJJ_f$ derived from $\G_{\PPP}$ implies the partition of $\JJJ_f$ obtained from $\G$. 

\vskip 0.24cm
Renormalization characterizes dynamical sub-systems through first-return maps on subsets. For polynomials, renormalizations can be defined by polynomial-like maps, which were introduced by Douady and Hubbard to study the self-similarity of the Mandelbrot set \cite{douady1985dynamics}. 
The renormalization of quadratic polynomials has been extensively studied in works such as \cite{hubbard1993local, kahn2006priori, kahn2008priori, kahn2009priori, lyubich1997dynamics, mcmullen1994complex, milnor2000local}. Similarly, for rational maps, renormalizations can be defined using rational-like maps in the same way (refer to \cite{buff2003virtually, cui2016renormalizations} or see \S \ref{sec:rational-like}). Typically, the renormalizations of polynomials are detected via Yoccoz puzzles, which are constructed from external rays and equipotential curves. However, effective methods for detecting the renormalizations of rational maps are still lacking in general. 

As stated in Theorem \ref{thm:combinatorial}, for any completely stable multicurve, the first-return maps on periodic components always yield combinatorial renormalizations of PCF rational maps. More work is required to remove the term ``combinatorial". For instance, in the case of a rational map which is the mating of polynomials, the equator forms a completely stable multicurve. However the first-return map on each complementary component of the equator does not induce a (rational-like) renormalization. On the other hand, if a rational map has Cantor multicurves, it is always renormalizable \cite[Theorem 1.3]{cui2016renormalizations}. Theorem \ref{thm:disjoint} implies that adjacent small Julia sets are disjoint if the corresponding pieces of $\cbar\sm\G$ are separated by coiling curves. This leads to a question: if a small Julia set is disjoint with all its adjacent small Julia sets, can it be the Julia set of a rational-like map? We prove that it is indeed true in our case. 

\begin{thm}\label{thm:renorm}
Suppose that $U\in\UU_{\G}$ is a periodic component with period $p\geq 1$. If every curve in $\pa U$ is coiling in $\,\G$, then there exists a finitely connected domain $V$, isotopic to $U$ rel $\PPP_f$, and a component $V^p$ of $f^{-p}(V)$ such that $f^p: V^p\to V$ is a renormalization. Moreover, $f^p:V^p\to V$ is hybrid equivalent to the combinatorial renormalization of $f$ on $U$. 
\end{thm}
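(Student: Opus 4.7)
The plan is to thicken $U$ by annular collars along each boundary curve, producing a finitely connected domain $V$ and a preimage component $V^p\Subset V$ such that $f^p:V^p\to V$ is a rational-like map, and then to identify its straightening with the combinatorial renormalization $g$ via the uniqueness in Theorem \ref{thm:combinatorial}.

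I would first enlarge $\PPP_f$ to $\PPP$ and $\G$ to $\G_{\PPP}$ as in Theorem \ref{thm:disjoint}. Since every $\g\in\partial U$ is coiling in $\G$, no new marked points need be inserted inside $U$: one has $U\in\UU_{\G_{\PPP}}$, and every $\g\in\partial U$ remains coiling in $\G_{\PPP}$. Theorem \ref{thm:disjoint} then yields $\JJJ_U\cap\JJJ_{V}=\emptyset$ for every piece $V\in\UU_{\G_{\PPP}}$ adjacent to $U$, and in particular $\JJJ_U\cap\g=\emptyset$ for every $\g\in\partial U$. This leaves room in $\cbar\sm(\PPP_f\cup\JJJ_U)$ for an open annular neighborhood across each boundary curve, in which the collar will be installed.

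The technical heart is the following annular construction: for each $\g\in\partial U$ of exact period $q\mid p$, I would produce an open annular neighborhood $A_\g\subset\cbar\sm\PPP_f$ of $\g$ such that the component $\widetilde A_\g$ of $f^{-p}(A_\g)$ isotopic to $\g$ rel $\PPP_f$ and adjacent to $U^p$ is compactly contained in $A_\g$. Starting from a thin initial annulus $A_0$ around $\g$ in $\cbar\sm(\PPP_f\cup\JJJ_U)$, let $A_n$ be the component of $f^{-np}(A_0)$ isotopic to $\g$ rel $\PPP_f$ and on the $U^{np}$-side. The coiling hypothesis $\kappa_n(\g,\G)\to\infty$ generates a Cantor-set-type structure in the pullbacks, and combined with $\JJJ_U\cap\g=\emptyset$ it forces the existence of an index $n_0$ with $A_{n_0+1}\Subset A_{n_0}$, in the spirit of the rational-like construction for Cantor multicurves in \cite{cui2016renormalizations}. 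Taking $A_\g:=A_{n_0}$ and letting $A_\g^-$ denote the component of $A_\g\sm\g$ opposite to $U$, I define
$$V:=U\cup\bigcup_{\g\in\partial U}(\g\cup A_\g^-).$$
Then $V$ is a finitely connected open domain isotopic to $U$ rel $\PPP_f$, and $A_\g\subset V$ for every $\g\in\partial U$. Let $V^p$ be the component of $f^{-p}(V)$ parallel to $U$. Since $V^p$ is obtained from $U^p$ by attaching the annuli $\widetilde A_\g$ and each $\widetilde A_\g\Subset A_\g\subset V$, we conclude $V^p\Subset V$. Hence $f^p:V^p\to V$ is a proper holomorphic map between finitely connected domains, i.e.\ a rational-like (renormalization) map, of degree $\deg g$.

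Finally, I would invoke the straightening theorem for rational-like maps (extending Douady--Hubbard; see \cite{cui2016renormalizations, buff2003virtually}) to obtain a PCF rational map $h$ hybrid equivalent to $f^p:V^p\to V$. The post-critical portrait of $h$ matches that of the combinatorial renormalization $g$ of Theorem \ref{thm:combinatorial} through the homeomorphisms $\varphi_0,\varphi_1$ there; the uniqueness of $g$ in Theorem \ref{thm:combinatorial} then forces $h\cong g$ up to holomorphic conjugation, and composing the hybrid equivalence with this conjugation yields the ``moreover'' clause. The main obstacle is the compact-containment step: converting the combinatorial statement $\kappa_n(\g,\G)\to\infty$ into a genuine geometric shrinkage of the pullback annuli on the $U$-side is the only place where the coiling hypothesis plays a substantive role, and it requires adapting the Cantor-multicurve machinery of \cite{cui2016renormalizations} to the present setting where only the curves on $\partial U$ --- not all of $\G$ --- are assumed coiling.
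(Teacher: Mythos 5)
Your proposal gets the overall frame right (produce a rational-like map $f^p:V^p\to V$ with $V^p\Subset V$ and match it with the combinatorial renormalization via straightening and the uniqueness in Theorem~\ref{thm:combinatorial}), but the step you call the ``technical heart'' is exactly where the argument breaks down, and it is the one step the paper handles by an entirely different mechanism.

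Your assertion that the coiling hypothesis $\kappa_n(\g,\G)\to\infty$, combined with disjointness of the small Julia sets across $\g$, ``forces'' a compactly nested pullback annulus $A_{n_0+1}\Subset A_{n_0}$ is unsupported, and the paper's introduction explicitly warns against this route: in general, a completely stable multicurve \emph{does not} induce an exact annular system. The Cantor-multicurve machinery you cite relies on \emph{every} curve being coiling, so that the minimal annulus $A^*(\g)$ containing all essential preimages strictly nests under pullback. When $\g\in\partial U$ is coiling only because preimages of \emph{other} curves of $\G$ accumulate on one side of $\g$ (as in Corollary~\ref{cor:example}, where $\kappa_n(\beta_0,\G)=2n+1$ while $F^{-n}(\beta_0)$ has a \emph{unique} essential component isotopic to $\beta_0$ for each $n$), the curve $\g$ has a unique essential pullback $\g^p$ under $f^p$, and the corresponding annular pullback can continue to share boundary with the original annulus. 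The counting hypothesis gives no modulus control or geometric nesting by itself, and you have not provided the argument needed to extract it.

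The paper's proof avoids pulling back annuli around $\partial U$ altogether. It works with the continuum $K_U=\bigcap_{n\geq 1}\overline{W^{np}}$ in the standard-form picture and its image $E:=h(K_U)$ under the Rees--Shishikura semiconjugacy, and it uses coiling only to establish a purely topological statement: applying Lemma~\ref{lem:key} with $\AAA_U$ (the annuli of the sub-multicurve $\G_U$ generated by $\partial U$, all of whose curves are coiling) in place of $\AAA_0$, one gets that distinct components of $\cbar\sm F^{-np}(\AAA_U)$ have disjoint $h$-images, hence every component of $f^{-p}(E)$ is either $E$ or disjoint from $E$. The geometric compact containment is then supplied by Lemma~\ref{lem:renorm}, whose annular collars around the complementary pieces $D\in\DD$ of $\widetilde E$ are built from the \emph{expanding orbifold metric} of the PCF map $f$ near $\JJJ_f$ --- not from coiling. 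So in the paper, coiling yields disjointness of the preimage continua, and orbifold expansion yields the nested domain; your proposal conflates these two roles and is missing the expansion ingredient. A secondary issue: deducing $V^p\Subset V$ only from $\widetilde A_\g\Subset A_\g$ along each $\g\in\partial U$ is too quick, since one must also control the part of $\overline{V^p}$ lying over the interior of $U^p$ (its Julia-set portion and the Fatou components attached to $K_U$), which is precisely what the $f_\#$-dynamics on $\DD$ and the collar construction in the proof of Lemma~\ref{lem:renorm} are designed to handle.
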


If $\,\G$ contains a Cantor multicurve $\G'$, then for each periodic piece $U\in\UU_{\G'}$, each curve in $\pa U$ is coiling in $\,\G'$. In fact, in \cite{cui2016renormalizations}, the authors demonstrated that a Cantor multicurve induces an exact annular system, leading to two domains where one is compactly contained in the other. In general, the multicurve does not induce exact annular systems. The proof of Theorem \ref{thm:renorm} relies on the expanding property of PCF rational maps on their Julia sets. We find the renormalization domain by locating a continuum whose inverse images either coincide with itself or do not intersect it. 

For a completely stable multicurve that does not contain any Cantor multicurves, it is possible that no periodic component $U\in\UU_{\G}$ satisfying the conditions in Theorem \ref{thm:renorm}. Nevertheless, we find that as long as $\G$ contains coiling curves, there is a submulticurve $\G'\subset\G$, which is certainly not a Cantor multicurve, such that there is a periodic component in $\UU_{\G'}$ satisfying the condition in Theorem \ref{thm:renorm}. In other words, we get the following theorem. 

\begin{thm}{\label{thm:submulticurve}}
A PCF rational map with a coiling curve is renormalizable. Moreover each boundary component of the renormalization domains is isotopic to a coiling curve relative to the post-critical set. 
\end{thm}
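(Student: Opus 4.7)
The plan is to reduce Theorem~\ref{thm:submulticurve} to Theorem~\ref{thm:renorm} by producing a completely stable sub-multicurve $\G'\subset\G$ together with a periodic component $U\in\UU_{\G'}$ whose boundary curves are all coiling in $\G'$. I first apply Theorem~\ref{thm:disjoint} to pass to a marked set $\PPP\supset\PPP_f$ and a completely stable multicurve $\G_\PPP$ of $(f,\PPP)$ in which every curve is either coiling or satisfies $\kappa_n\equiv 1$, with coiling curves corresponding rel $\PPP_f$ to those of $\G$. By hypothesis $\G_\PPP$ contains a coiling curve.

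Next I consider the directed \emph{transition graph} on $\G_\PPP$, whose edges $\gamma\to\gamma'$ record essential preimage isotopy classes (each edge corresponds to a component of $f^{-1}(\gamma)$ isotopic rel $\PPP$ to $\gamma'$). Two observations drive the construction. First, the set $\G_c$ of coiling curves is \emph{forward-closed}: if $\gamma\in\G_c$ and there is an edge $\gamma\to\gamma'$, then extending walks of length $n$ ending at $\gamma$ gives walks of length $n+1$ ending at $\gamma'$, whence $\kappa_{n+1}(\gamma',\G_\PPP)\geq\kappa_n(\gamma,\G_\PPP)\to\infty$, so $\gamma'\in\G_c$. Second, every $\gamma\in\G_c$ has a coiling predecessor, since $\kappa_n(\gamma,\G_\PPP)$ is a finite sum of the quantities $\kappa_{n-1}(\gamma'',\G_\PPP)$ over the predecessors $\gamma''$ of $\gamma$ (weighted by edge multiplicities), and its unboundedness in $n$ forces at least one such $\gamma''$ to be coiling. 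Together these facts imply that $\G_c$ is itself a completely stable sub-multicurve.

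If $\UU_{\G_c}$ has a periodic component with every boundary curve coiling in $\G_c$, I set $\G'=\G_c$ and apply Theorem~\ref{thm:renorm}. In general this need not hold, because coiling in $\G_\PPP$ may depend on walks passing through non-coiling curves outside $\G_c$ (for instance when the transition matrix has a Jordan block at eigenvalue $1$, yielding only polynomial growth of $\kappa_n$). In that case I would enlarge $\G_c$ by adjoining a selected collection of non-coiling curves from $\G_\PPP$, chosen to act as catalysts keeping boundary curves of some periodic component coiling within the enlarged $\G'$ while preserving complete stability. Because $\G'$ may then still contain non-coiling curves, it is typically not a Cantor multicurve, matching the remark preceding the theorem.

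Finally, Theorem~\ref{thm:renorm} applied to $(\G',U)$ yields a finitely connected domain $V$ and a component $V^p$ of $f^{-p}(V)$ with $f^p:V^p\to V$ a renormalization. Each boundary component of $V$ is isotopic rel $\PPP$ to a curve in $\partial U\subset\G'$ coiling in $\G'$, hence coiling in $\G_\PPP$ and, by Theorem~\ref{thm:disjoint}(3), isotopic rel $\PPP_f$ to a coiling curve of $\G$. The chief obstacle is the catalyst enlargement: one must show that no matter how the periodic orbits of $f_{\G_c}$ are arranged, one can always assemble an enlargement $\G'\supseteq\G_c$ supporting some periodic piece with all-coiling boundary. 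I expect this to require a Perron--Frobenius analysis of the transition matrix combined with the dynamics of $f_{\G_\PPP}$ on pieces, possibly after replacing $f$ by a suitable iterate, and this is where the novelty of the theorem beyond Theorem~\ref{thm:renorm} lies.
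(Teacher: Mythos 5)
Your strategic frame is right (reduce to Theorem~\ref{thm:renorm} by exhibiting a completely stable sub-multicurve $\G'$ with a periodic piece whose boundary curves are all coiling in $\G'$), and your preliminary observations are sound: the set $\G_c$ of coiling curves of $\G_\PPP$ is forward-closed (this is exactly the remark that $\Lambda_\g\subset\G_c$ for every coiling $\g$) and every coiling curve has a coiling predecessor, so $\G_c$ is a completely stable sub-multicurve. But the heart of the argument --- the ``catalyst enlargement'' that is supposed to produce a $\G'$ in which some periodic piece has all-coiling-\emph{in-$\G'$} boundary --- is announced but never constructed; you explicitly flag it as the unresolved obstacle and guess that a Perron--Frobenius analysis of the transition matrix is needed. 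As written this is a genuine gap, not a proof, and the speculative enlargement is also not guaranteed to exist in the form you posit (nothing forces the sought-after $\G'$ to contain $\G_c$).

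The paper's construction closes this gap by a geometric, not spectral, argument and does not pass through $\G_\PPP$ at all. After trivially disposing of the case where $\G$ contains a Cantor sub-multicurve (Theorem~\ref{thm:renorm} then applies directly), it works under the standing assumption that $\G$ has no Cantor multicurve. Lemma~\ref{lem:uniquecycle} then gives uniqueness of the cycle through any periodic curve, and one picks a periodic coiling curve $\g$ (which exists by Lemma~\ref{lem:aperiodic-coiling}) and deforms $f$ so that $F^p$ fixes $\g$ orientation-preservingly. The key input is the one-sidedness Lemma~\ref{lem:oneside}: for every other $\alpha\in\G$, all the curves in $\bigcup_n F^{-n}(\alpha)$ isotopic to $\g$ lie on a single side of $\g$. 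This permits defining $\G^*$ (curves whose $\g$-preimages fall on the ``left'') and then $\G'=\{\beta\in\G^*:\,\beta\ \text{disjoint from}\ \UUU_\g\}$, where $\UUU_\g$ is the grand orbit of the ``right-hand'' piece $U_\g$. Lemma~\ref{lem:one-side} then shows $\G'$ is completely stable, $U_\g$ is $f_{\G'}$-fixed, and every curve of $\Lambda_\g$ (hence every curve of $\partial U_\g$) is coiling in $\G'$. Note that this $\G'$ typically contains non-coiling curves, and it is neither your $\G_c$ nor necessarily an enlargement of it: the mechanism that makes it work is the geometric one-sidedness and cycle-uniqueness, not an analysis of Jordan blocks in the transition matrix.
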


Let $f$ be a PCF rational map. A Jordan curve $\beta\subset\cbar\sm\PPP_f$ is {\bf peripheral} around a point $a\in\PPP_f$ if one component of $\,\cbar\sm\beta$ intersects $\PPP_f$ at the point $a$. By incorporating peripheral curves into multicurves, we introduce the following concept. 
\begin{defi}\label{def:coilingdomain}
A fixed Fatou domain $D$ of a PCF rational map $f$ is a {\bf coiled Fatou domain} if there is an essential curve $\alpha\subset\cbar\sm\PPP_f$ such that
\begin{itemize}
\item[(1)] $f^{-1}(\alpha)$ contains a curve $\alpha^1$ isotopic to $\alpha$ rel $\PPP_f$.
\item[(2)] $f^{-1}(\alpha)$ contains curves isotopic rel $\PPP_f$ to $\beta$ which is peripheral around the fixed point $a\in D$.
\end{itemize}
Here we use the term ``coiled" since the curve $\beta$ is regarded as a coiling curve although it is non-essential. We say $D$ is {\bf coiled by a polynomial} if the combinatorial renormalization of $f$ on $U$ is a polynomial, where $U$ is the component of $\,\cbar\sm\alpha$ disjoint from $a$. 
\end{defi}
\begin{figure}[htbp]
\centering
\includegraphics[width=9cm]{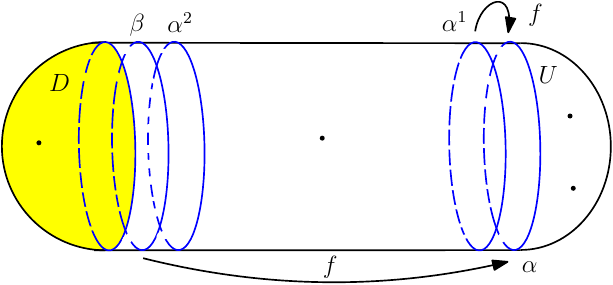}
\caption{The yellow disc $D$ is a Fatou domain. The curve $\alpha^2$ is a component of $f^{-1}(\alpha)$ isotopic to $\beta$ rel $\PPP_f$. }
\label{fig:Jgcoloring.pdf}
\end{figure}

\begin{thm}\label{thm:coiledJordan}
A coiled Fatou domain of PCF rational maps is always a Jordan domain. Moreover, its closure is disjoint from the closure of other Fatou domains. 
\end{thm}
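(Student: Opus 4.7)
The plan is to follow the strategy of Theorem~\ref{thm:renorm} and adapt it to the Fatou-domain setting: conditions~(1) and~(2) of Definition~\ref{def:coilingdomain} will be combined to produce a polynomial-like restriction of $f$ on a neighborhood of $\ol D$, and the Douady--Hubbard straightening theorem together with the classical fact that every Fatou component of a PCF polynomial is a Jordan domain will deliver both the Jordan boundary of $D$ and the disjointness of $\ol D$ from other Fatou domain closures.

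First I would construct a polynomial-like pair. Let $W$ denote the component of $\cbar\sm\alpha$ containing $a$, and let $V$ be the component of $f^{-1}(W)$ bounded by the component $\alpha^2$ of $f^{-1}(\alpha)$ that is isotopic to $\beta$ rel $\PPP_f$, as supplied by condition~(2). An orientation argument, using that $a$ is fixed, that $f$ is orientation-preserving, and that $f(V)=W\ni a$, places $a$ inside $V$ and forces $V$ to be the peripheral side of $\alpha^2$, so $V\cap\PPP_f=\{a\}$. Choosing $\alpha$ carefully inside its isotopy class rel $\PPP_f$ and, if necessary, passing to a deep pullback along the essential class $[\alpha^1]=[\alpha]$ supplied by condition~(1), one arranges the compact containment $\ol V\Subset W$ together with $\ol D\subset V$. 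The map $f:V\to W$ is then a polynomial-like map whose critical points inside $V$ are precisely the critical points of $f$ lying in $\ol D$, all of which are preperiodic under $f$ and land either at $a$ or at a periodic point on $\pa D$.

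The second step is to apply straightening. Since every critical orbit of $f|_V$ is eventually periodic and stays in $V$ forever, the polynomial-like pair is hybrid equivalent to a PCF polynomial $P$ having $\phi(a)$ as a super-attracting fixed point. The classical theorem that every Fatou component of a PCF polynomial is a Jordan domain guarantees that the immediate super-attracting basin $D_P$ of $\phi(a)$ under $P$ is bounded by a Jordan curve, and transferring this through the quasi-conformal hybrid equivalence back to $f$ yields the Jordan property for $\pa D$. The disjoint-closure statement follows by the same setup: condition~(1) and the essential pullback structure, applied exactly as in the proof of Theorem~\ref{thm:renorm}, prevent any satellite preimage component of $D$ from being trapped inside $V$, so every other Fatou component of $f$ lies outside $V$ and therefore has closure disjoint from $\ol D\subset V$.

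The main technical obstacle, and the place where the argument most closely parallels Theorem~\ref{thm:renorm}, is the simultaneous achievement of three conditions on $V$: the compact containment $\ol V\Subset W$, the capture $\ol D\subset V$, and the PCF-ness of the straightening of $f:V\to W$. The peripheral pullbacks of condition~(2) shrink tightly around $a$, which helps with compact containment but threatens to exclude parts of $\ol D$; the essential pullbacks of condition~(1) preserve the coarse scale of $\alpha$ but can allow spurious critical orbits through $V$. The balance between the two coilings is precisely what makes the coiled-Fatou-domain hypothesis rigid, and the resolution uses both conditions simultaneously, together with the expanding property of $f$ on $\JJJ_f$, to pin down a single polynomial-like pair satisfying all three requirements.
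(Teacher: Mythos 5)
Your high-level plan matches the paper's strategy — build a polynomial-like restriction around the fixed point $a$ using the pullback structure, straighten it, and read off the Jordan property and the separation. But the crux of the argument, which you explicitly flag as ``the main technical obstacle,'' is left unresolved, and it is precisely the content the paper supplies. You say that ``choosing $\alpha$ carefully inside its isotopy class... and, if necessary, passing to a deep pullback along the essential class $[\alpha^1]=[\alpha]$... one arranges the compact containment $\ol V\Subset W$ together with $\ol D\subset V$,'' and later that ``the resolution uses both conditions simultaneously, together with the expanding property of $f$ on $\JJJ_f$.'' That is a description of what must be proved, not a proof. A pullback of $\alpha$ isotopic to $\alpha$ need not be compactly contained on one side, and nothing in what you have written rules out $\pa D$ and the relevant pullback curves accumulating on a common Julia point. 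The paper resolves this by invoking Theorem~\ref{thm:prenorm} (the pseudo-multicurve version of Theorem~\ref{thm:renorm}), which routes through Lemma~\ref{lem:renorm}: one first identifies a continuum $E=h(K_U)$ (the $h$-image of the intersection of nested pullback pieces in the standard form) whose $f^{-p}$-preimage components are either equal to $E$ or disjoint from it — disjointness being exactly what the coiling hypothesis buys via Lemma~\ref{lem:key} — and only then builds the renormalization neighborhood $V$ around $E$ using expansion on $\JJJ_f$. That intermediate object and the disjointness-of-preimages dichotomy are the missing ideas.

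There are two further issues. First, once you grant $V\cap\PPP_f=\{a\}$ (which you assert), the hybrid model is forced to be $z\mapsto z^k$ with $k=\deg_a f$, since the polynomial-like map then has post-critical set $\{a\}$; your description of critical orbits ``landing either at $a$ or at a periodic point on $\pa D$'' is inconsistent with that and unnecessarily invokes the general Jordan-Fatou theorem for PCF polynomials, where the paper reads the Jordan property directly from the quasiconformal image of $\ol{\mathbb D}$. Second, the disjoint-closure claim does not follow from ``every other Fatou component lies outside $V$.'' Another Fatou component could have closure meeting $\pa V$, and without control on where $\ol D$ sits relative to $\pa V$ (i.e.\ without first establishing $\ol D\Subset V$, which is the unresolved containment above), you cannot conclude $\ol D\cap\ol\Omega=\emptyset$. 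The paper instead reduces to periodic Fatou domains and produces an honest separating annulus of positive modulus by combining the semi-conjugacy $h$ of Theorem~\ref{thm:semiconjugacy} with Lemma~\ref{lem:key}; that annulus, not mere set-theoretic exclusion from $V$, is what does the separating.
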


To construct a rational map with a coiling curve (which is required by Theorem \ref{thm:submulticurve}), we first construct rational maps with a coiled Fatou domain coiled by a polynomial which is injective on its Hubbard tree (refer to \S \ref{sec:coiled} for definitions) by tuning and disc-annulus surgery.  We then prove that such a coiled Fatou domain is tunable with any PCF polynomials (Theorem \ref{thm:tunable}). It is straightforward to verify that the resulting rational maps possess a coiling curve. In \S \ref{sec:specific}, we provide two specific examples of PCF rational maps with a coiled Fatou domain coiled by the polynomial $z\mapsto z^2-1$, which is injective on its Hubbard tree. 



\section{Combinatorial decomposition}
\subsection{Thurston equivalence}\label{sec:thurston}
Before we prove Theorem \ref{thm:combinatorial}, we first review some known results. Thurston provided a complete topological characterization of PCF rational maps \cite{douady1993proof}. He defined Thurston equivalence between PCF branched coverings and established a necessary and sufficient condition for a PCF branched covering map to be Thurston equivalent to a rational map using multicurves. In this section, we will recall related concepts and results. 


A marked branched covering $(F,\PPP)$ is {\bf Thurston equivalent} to a marked rational map $(f,\QQQ)$ if there exist orientation preserving homeomorphisms $(\phi_0,\phi_1)$ of $\cbar$ such that $\phi_0$ is isotopic to $\phi_1$ rel $\PPP$ and $\phi_0\circ F=f\circ\phi_1$.

Let $\G$ be a multicurve of $(F,\PPP)$. Its transition matrix $M_{\G}=(a_{\g\beta})$ is defined as
$$
a_{\g\beta}=\sum_{\delta}\frac{1}{\deg(F:\,\delta\to\beta)},
$$
where the summation is taken over all components of $F^{-1}(\beta)$ that are isotopic to $\g$ rel $\PPP$. Denote by $\lambda(M_{\G})$ the leading eigenvalue of $M_{\G}$. The multicurve $\G$ is a {\bf Thurston obstruction} of $(F,\PPP)$ if $\lambda(M_{\G})\ge 1$.

\begin{thm}[{\bf Thurston}]\label{thm:thurston}
Let $(F,\PPP)$ be a marked branched covering of $\,\cbar$ with hyperbolic orbifold. Then $(F,\PPP)$ is Thurston equivalent to a marked rational map if and only if $(F,\PPP)$ has no Thurston obstructions. Moreover, the rational map is unique up to holomorphic conjugation. 
\end{thm}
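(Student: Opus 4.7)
The strategy I would follow is Thurston's original Teichm\"uller-theoretic argument (written up in \cite{douady1993proof}). The central object is the \emph{Thurston pullback map}
$$
\sigma_F:\mathcal T(\cbar,\PPP)\to\mathcal T(\cbar,\PPP),
$$
where $\mathcal T(\cbar,\PPP)$ is the Teichm\"uller space of complex structures on $\cbar$ marked by $\PPP$. Given a point $[\tau]$, represent it by a homeomorphism $\phi_0:(\cbar,\PPP)\to(\cbar_\tau,\phi_0(\PPP))$ to a complex sphere; pull the complex structure back via $F$ to get a new complex structure on $\cbar$ in which $F$ becomes holomorphic, and straighten by a homeomorphism $\phi_1$. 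The class $\sigma_F([\tau]):=[\phi_1]$ is well defined because $F(\PPP)\cup\PPP_F\subset\PPP$. The first lemma to establish is the tautological equivalence $F$ is Thurston equivalent to a rational map $\iff \sigma_F$ has a fixed point in $\mathcal T(\cbar,\PPP)$: a fixed point produces the commuting pair $(\phi_0,\phi_1)$, and conversely, a rational representative yields an invariant point.

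The next ingredient is that $\sigma_F$ is weakly contracting for the Teichm\"uller metric, with strict contraction as soon as the orbifold of $(F,\PPP)$ is hyperbolic; this is a consequence of the Schwarz--Pick lemma applied to the covering $F^*$ on Beltrami differentials of norm $\le 1$, combined with Royden's identification of the Teichm\"uller metric with the Kobayashi metric. Strict contraction on the part of $\mathcal T$ relevant to any prospective fixed point immediately yields uniqueness of the fixed point, hence uniqueness of the rational map up to M\"obius conjugation.

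For the \textbf{only if} direction, suppose $(F,\PPP)$ is Thurston equivalent to a rational map $(f,\QQQ)$, and suppose for contradiction that $\G$ is a Thurston obstruction. Using extremal length, I would replace each $\g\in\G$ by the shortest geodesic in its isotopy class rel $\QQQ$ with respect to a fixed hyperbolic metric; the transition matrix $M_\G$ acts on the vector of reciprocals of moduli of thin annular neighborhoods. The Gr\"otzsch inequality applied to $f^{-1}$ gives a strict inequality that is incompatible with $\lambda(M_\G)\ge 1$, contradicting realizability.

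The \textbf{hard} direction is the converse. I would fix a basepoint $[\tau_0]$ and study the orbit $\{[\tau_n]=\sigma_F^n([\tau_0])\}$. If this orbit is relatively compact in $\mathcal T$, the weak contraction plus the hyperbolic orbifold hypothesis force convergence to a fixed point, and we are done. The core content is to show that if the orbit \emph{escapes} to infinity, then one can extract a Thurston obstruction. The main obstacle, and the most technical step, is this extraction: one must analyze how $[\tau_n]$ leaves every compact set, which by the Mumford--Bers compactness criterion forces a collection of simple closed curves $\G$ to be pinched, i.e.\ their extremal lengths tend to zero. One then shows, using the commutation of $\sigma_F$ with the $F$-pullback of annular neighborhoods and a length/modulus estimate, that the pinching rates along $\G$ satisfy, asymptotically, a linear inequality governed by $M_\G$; a Perron--Frobenius argument then forces $\lambda(M_\G)\ge 1$, producing the obstruction. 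Assembling these pinched curves into a genuine multicurve (pairwise disjoint and pairwise non-isotopic rel $\PPP$) requires the usual surgery and passage to an $F$-stable sub-collection, which is standard once the pinching data are in hand. This divergence-to-obstruction step is where all the analytic work concentrates; everything else is formal, and the uniqueness statement is an immediate byproduct of strict contraction under the hyperbolic orbifold assumption.
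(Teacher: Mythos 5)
The paper does not prove Theorem~\ref{thm:thurston}; it is cited as a known result, with the proof deferred to \cite{douady1993proof} and \cite{buff2014teichmuller} (``Refer to ... for the definition of hyperbolic orbifold and the proof of the theorem''). So there is no in-paper proof to compare against. Your sketch is the standard Douady--Hubbard Teichm\"uller-theoretic argument, and as an outline it is essentially correct: define the pullback map $\sigma_F$, observe that fixed points correspond exactly to rational realizations, establish non-expansion from Schwarz--Pick/Royden, derive uniqueness from pointwise strict contraction under the hyperbolic-orbifold hypothesis, and in the hard direction analyze the iterated orbit using Mumford compactness to extract a multicurve whose transition matrix has leading eigenvalue $\geq 1$ in the divergent case.

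One place where you should be more careful is the claim that $\sigma_F$ is strictly contracting ``as soon as the orbifold is hyperbolic.'' What is actually true is that $\lVert d\sigma_F\rVert\le 1$ everywhere, with equality possible at individual points; the Douady--Hubbard argument shows that for hyperbolic orbifold, some fixed iterate $\sigma_F^{\circ n}$ (one can take $n=2$) has coderivative norm strictly less than $1$ at every point, via the analysis of the transfer operator on integrable quadratic differentials: equality would force every quadratic differential to be a pullback, which the hyperbolic-orbifold hypothesis rules out after two steps. This pointwise strict contraction of an iterate is what gives uniqueness of the fixed point, and together with the non-escape assumption gives convergence. The rest of your sketch --- in particular isolating the divergence-to-obstruction extraction as the technical core --- is the right emphasis and matches the structure of the proof in \cite{douady1993proof}.
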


Refer to \cite{buff2014teichmuller, douady1993proof} for the definition of hyperbolic orbifold and the proof of the theorem. The following lemma is useful for determing whether a stable multicurve is a Thurston obstruction (refer to \cite[Theorem B.6]{mcmullen1994complex}). 

\begin{lem}\label{lem:irreducible}
For any stable multicurve $\G$ with $\lambda(M_{\G})>0$, there is an irreducible multicurve $\G_0\subset\G$ such that $\lambda(M_{\G_0})=\lambda(M_{\G})$.
\end{lem}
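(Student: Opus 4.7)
The plan is to prove Lemma \ref{lem:irreducible} by a standard Perron-Frobenius style analysis of the non-negative matrix $M_{\G}$ via the strongly connected components of its associated digraph. First I would attach to $\G$ a directed graph $G_{\G}$ whose vertices are the curves in $\G$, placing an edge from $\g$ to $\beta$ precisely when $a_{\g\beta}>0$, i.e.\ when $F^{-1}(\beta)$ contains at least one component isotopic to $\g$ rel $\PPP$. The strongly connected components of $G_{\G}$ partition $\G$ into sub-collections $\G_1,\dots,\G_k$; ordering them compatibly with a topological order on the condensation and permuting the basis of $M_{\G}$ accordingly puts $M_{\G}$ into block upper-triangular form with diagonal blocks $M_{\G_i}$ indexed by the strongly connected components.

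Since the spectrum of a block upper-triangular matrix is the disjoint union of the spectra of its diagonal blocks, one obtains
\[
\lambda(M_{\G})=\max_{1\le i\le k}\lambda(M_{\G_i}).
\]
Each diagonal block is either a Perron-Frobenius irreducible non-negative matrix (when the corresponding component contains a cycle) or a $1\times 1$ zero block (when it is a singleton with no self-loop, in which case $\lambda(M_{\G_i})=0$). Because $\lambda(M_{\G})>0$ by hypothesis, the maximum above is realized by some $\G_i$ whose diagonal block is genuinely irreducible; take $\G_0:=\G_i$.

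It remains to check that $\G_0$ qualifies as a multicurve and that its transition matrix agrees with the submatrix $M_{\G_i}$ of $M_{\G}$. Since $\G_0\subset\G$, its curves are pairwise disjoint, pairwise non-isotopic, and essential rel $\PPP$, so $\G_0$ is itself a multicurve. For any $\g,\beta\in\G_0$, the intrinsic entry of the transition matrix of $\G_0$ counts components of $F^{-1}(\beta)$ isotopic to $\g$ rel $\PPP$, which is exactly the entry $a_{\g\beta}$ of $M_{\G}$; so the submatrix coincides with the intrinsic transition matrix, and $\lambda(M_{\G_0})=\lambda(M_{\G})$ follows.

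I do not anticipate a substantive obstacle. The only points deserving care are (i) aligning the graph-theoretic notion of strong connectedness on $\G_0$ with Perron-Frobenius irreducibility of $M_{\G_0}$, which is standard, and (ii) handling degenerate singleton components with no self-loop, which is dispatched by noting that their contribution to the spectrum is $0$ and therefore cannot attain the positive leading eigenvalue of $M_{\G}$.
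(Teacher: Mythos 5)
Your proof is correct. The paper does not supply its own argument for this lemma; it simply cites \cite[Theorem~B.6]{mcmullen1994complex}, and the reasoning there is exactly the standard Frobenius-normal-form decomposition you give: pass to the condensation digraph of $M_{\G}$, put the matrix in block upper-triangular form indexed by strongly connected components, observe that the spectral radius equals the maximum of the spectral radii of the diagonal blocks, and pick a block realizing the maximum, which must be genuinely irreducible since $\lambda(M_{\G})>0$ rules out the degenerate $1\times 1$ zero blocks. Your two ``points deserving care'' are indeed the only delicate ones, and you dispatch them properly: the paper's notion of an irreducible multicurve (existence of a directed path from $\g$ to $\beta$ through $\G_0$ for every ordered pair, where $\g\to\beta$ means $F^{-1}(\beta)$ has a component isotopic to $\g$) is precisely strong connectedness of the digraph with the self-loop caveat for singletons, and the transition matrix of a submulticurve is the corresponding principal submatrix because the entry $a_{\g\beta}$ depends only on $\g$, $\beta$, $F$, and $\PPP$, not on which ambient multicurve they sit in. One stylistic remark: the hypothesis that $\G$ is stable is not actually used in this linear-algebraic argument (it matters elsewhere in the paper when relating eigenvalues to Thurston obstructions), so your proof in fact establishes the statement for arbitrary multicurves.
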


A multicurve $\G$ of $(F,\PPP)$ is {\bf irreducible} if for each pair $(\g,\beta)\in\G\times\G$, there exists a sequence of curves
$$
\{\g=\alpha_0,\, \alpha_1,\cdots,\alpha_n=\beta\}
$$
in $\G$ such that for $1\le k\le n$, $F^{-1}(\alpha_k)$ has a component isotopic to $\alpha_{k-1}$ rel $\PPP$. Note that an irreducible multicurve is pre-stable. If $\g=\beta$, then $\g$ is periodic and the sequence of curves $\alpha_0=\g,\alpha_1,\cdots,\alpha_{n-1}$ is called {\bf a cycle containing $\g$}. It is necessary to remark that, in general, the cycle containing a periodic curve is not unique. 

A cycle $\{\g=\alpha_0,\, \alpha_1,\cdots,\alpha_{n-1}\}$ is a {\bf Levy cycle} if it is a multicurve and for $i=0,1,\cdots,n-1$, $F^{-1}(\alpha_{i+1})$ contains a component $\alpha'_{i}$ that is isotopic to $\alpha_i$ rel $\PPP$, and $F:\alpha'_{i}\to\alpha_{i+1}$ is a homeomorphism. By definition, if $F$ has a Levy cycle, then $F$ has a Thurston obstruction. 

\vskip 0.24cm

The following theorem demonstrates the existence of a semi-conjugacy from $F$ to $f$ that satisfies certain special properties provided that $F$ is holomorphic in a neighborhood of the critical cycles (refer to Appendix A in \cite{cui2016renormalizations}). 

\begin{thm}[{\bf Rees-Shishikura}]\label{thm:semiconjugacy}
Suppose that $F$ is a PCF branched covering of $\,\cbar$ which is Thurston equivalent to a rational map $f$ through the pair $(\psi_0, \psi_1)$. If $F$ is holomorphic in a neighborhood $D$ of the critical cycles of $F$, then there exists a sequence of homeomorphisms $\{\phi_n\}_{n\geq 0}$ of $\,\cbar$ isotopic to $\psi_0$ rel $\PPP_{F}$, such that
\begin{itemize}
\item[(1)] $\phi_0$ is holomorphic in $D$ and $\phi_n|_{D}=\phi_0|_{D}$;
\item[(2)] $\phi_n\circ F=f\circ\phi_{n+1}$ on $\cbar$;
\item[(3)] $\phi_n$ converges uniformly to a continuous onto map $h$ and $h\circ F=f\circ h$ on $\cbar$;
\item[(4)] for all $z\in\cbar$, $h^{-1}(z)$ is a full continuum, and moreover, $h^{-1}(z)$ is a singleton if $z\in\FFF_f$.
\item[(5)] for points $x,y\in\cbar$ with $f(x)=y$, $h^{-1}(x)$ is a component of $F^{-1}(h^{-1}(y))$. Furthermore, $\deg F|_{h^{-1}(x)}=\deg_xf$. 
\end{itemize}
\end{thm}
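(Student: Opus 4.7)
\medskip

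\noindent\textbf{Proof proposal.} The strategy is the standard Thurston-type pullback iteration in the Teichmüller space of $(\cbar,\PPP_F)$, combined with a hyperbolic-geometry analysis of the limit to extract properties (4) and (5). First I would normalize the pair $(\psi_0,\psi_1)$: since $F$ is holomorphic in a neighborhood of its critical cycles, I can shrink $D$ so that $F(D)\subset D$ and each component of $D$ is a round disc in suitable local coordinates, then modify $\psi_0$ within its isotopy class rel $\PPP_F$ so that $\phi_0:=\psi_0$ is holomorphic on $D$. Define $\phi_1$ as the unique lift with $\phi_0\circ F=f\circ\phi_1$ that is isotopic to $\psi_1$ rel $\PPP_F$ (this lift exists because $f$ and $F$ have the same branching data after $\psi_0$-identification, and a homotopy lifting argument provides uniqueness within the isotopy class). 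Iterating this lift produces the sequence $\phi_n$ with $\phi_n\circ F=f\circ\phi_{n+1}$, and each $\phi_n$ is isotopic to $\psi_0$ rel $\PPP_F$ by construction. Because $F|_D$ is holomorphic and $F(D)\subset D$, an induction shows $\phi_{n+1}|_D=f^{-1}\circ\phi_n\circ F|_D$ remains holomorphic and agrees with $\phi_0|_D$, proving (1) and (2).

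Next I would address convergence. The sequence $\{\phi_n\}$ descends to a sequence of points $\tau_n$ in $\mathrm{Teich}(\cbar,\PPP_F)$ obtained by pulling back the marked complex structure through $F$. The Thurston equivalence of $F$ with the rational map $f$ means that the pullback operator $\sigma_F$ has a fixed point $[f]$, and by the standard argument (Douady-Hubbard \cite{douady1993proof}), in the absence of a Thurston obstruction the iteration $\sigma_F^n(\tau_0)\to[f]$. This gives convergence of $\phi_n$ uniformly on $\cbar$ to a continuous map $h$, after choosing representatives normalized at three marked points; passing to the limit in $\phi_n\circ F=f\circ\phi_{n+1}$ gives $h\circ F=f\circ h$, proving (3). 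To control the uniform convergence (not merely convergence in Teichmüller distance), I would use the expanding property of $f$ on $\JJJ_f$ in the hyperbolic metric on $\cbar\sm\PPP_f$: the hyperbolic diameters of the sets $\phi_n(F^{-n}(V))$ for small neighborhoods $V$ of points of $\JJJ_f$ decay, while on preimages of Fatou components $\phi_n$ is eventually holomorphic, yielding equicontinuity.

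For property (4), the fiber $h^{-1}(z)$ is the Hausdorff limit of $\phi_n^{-1}(z)$, hence a continuum; fullness follows because each $\phi_n^{-1}$ takes full sets to full sets. If $z\in\FFF_f$, then $z$ lies in some periodic or preperiodic Fatou component $\Omega$; pulling back by $f$ eventually forces $\phi_n^{-1}(\Omega)$ into the preimages of the holomorphic neighborhood $D$, on which $\phi_n$ is biholomorphic and stationary, so $h^{-1}(z)$ is a point. For points $z\in\JJJ_f$, one instead uses the contraction in the hyperbolic metric on $\cbar\sm\PPP_f$ under pullback by $f$, together with the fact that the diameter of $\phi_n^{-1}(B(z,\epsilon))$ is bounded by a pullback of a fixed hyperbolic disc, giving a nested sequence of continua with vanishing diameter. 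Property (5) is then a direct consequence of the intertwining $h\circ F=f\circ h$ together with the fact that $\phi_n$ is a homeomorphism: the fiber $h^{-1}(x)$ maps under $F$ onto $h^{-1}(y)$, and local degree considerations at each point of $h^{-1}(x)$ coincide with those of $F$ by the holomorphic nature of $\phi_n$ near such fibers.

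The main obstacle is the convergence argument in paragraph two: one must upgrade the Teichmüller-distance convergence to uniform convergence of quasiconformal representatives while retaining the compatibility $\phi_n|_D=\phi_0|_D$ throughout the iteration. This is where the holomorphicity assumption near the critical cycles is essential, since it prevents the Beltrami coefficients from degenerating on the recurrent part of the dynamics, and it is precisely where the appendix to \cite{cui2016renormalizations} carries out the delicate estimates.
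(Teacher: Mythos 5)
The paper does not give its own proof of this theorem. It is stated as a known result (attributed to Rees and Shishikura) and the reader is referred to Appendix A of Cui--Tan \cite{cui2016renormalizations} for the argument. So there is no internal proof to compare against; what follows is an assessment of your outline on its own terms.

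Your overall strategy is the standard one: construct $\phi_n$ by pullback, show uniform convergence using the expansion of $f$ on $\JJJ_f$ in the orbifold/hyperbolic metric together with stationarity on preimages of the holomorphic neighborhood $D$, and then read off properties of the limit. That part is sound, and you correctly identify that the holomorphicity of $F$ near the critical cycles is what makes $\phi_n$ stabilize on the Fatou part and prevents degeneration.

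However, there is a concrete gap in your argument for property (4). You write that ``the fiber $h^{-1}(z)$ is the Hausdorff limit of $\phi_n^{-1}(z)$, hence a continuum; fullness follows because each $\phi_n^{-1}$ takes full sets to full sets.'' This cannot be right as stated: each $\phi_n$ is a \emph{homeomorphism}, so $\phi_n^{-1}(z)$ is a single point, and a Hausdorff limit of singletons is (at most) a singleton. That would wrongly show that every fiber of $h$ is a point, which contradicts what you need to prove for $z\in\JJJ_f$. The correct mechanism is different: one should invoke the classical fact that a uniform limit of homeomorphisms of $S^2$ is a monotone surjection (so that all fibers are compact and connected), and then establish that the fibers are cellular (full) by exhibiting each $h^{-1}(z)$ as a nested intersection $\bigcap_{\epsilon>0}\ol{\bigcup_{n\geq N(\epsilon)}\phi_n^{-1}\bigl(B(z,\epsilon)\bigr)}$, using that the sets $\phi_n^{-1}(\ol{B(z,\epsilon)})$ are closed discs whose boundary curves eventually nest. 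Similarly, the surjectivity of $h$ and the degree identity in property (5) are not immediate from the intertwining relation alone; they rely on this same monotone/cellular structure of the fibers together with a local degree count via $\deg F|_{h^{-1}(x)}=\deg_x f$, which needs the statement (proved inductively along the pullback) that $h^{-1}(x)$ is exactly one component of $F^{-1}(h^{-1}(y))$ and $F$ restricted to a small neighborhood of that component has the expected degree.

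So: right skeleton, but the key step producing nontrivial fibers is handled incorrectly, and without that step properties (4) and (5) do not follow.
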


\subsection{Proof of Theorem \ref{thm:combinatorial}}\label{sec:combinatorial}
Let $(f,\PPP)$ be a marked rational map. A domain or a continuum $E\subset\cbar$ is {\bf simple type} if there is a simply connected domain $D\subset\cbar$ such that $E\subset D$ and $\#(D\cap\PPP)\le 1$; or {\bf annular type} if $E$ is not simple type and there is an annulus $A\subset\cbar\sm\PPP$ such that $E\subset A$; or {\bf complex type} otherwise. For any complex type domain $U$, we denote by $\wh{U}$ the union of $U$ and all the simple type components of $\cbar\sm U$. Two sets $U$ and $V$ are called {\bf parallel} to each other rel $\PPP_f$ if $\wh{U}$ is isotopic to $\wh{V}$ rel $\PPP_f$. 

Now we take $\PPP=\PPP_f$ and $\G$ a completely stable multicurve of $(f,\PPP_f)$. Then each component $U$ of $\cbar\sm\G$ is complex type since the sum of the number of connected components of $\pa U$ and the number of points in $\PPP_f\cap U$ is greater than $2$. Denote by $\UU:=\UU_{\G}$ the collection of components of $\cbar\sm\G$, and by $\UU^n$ the collection of all the complex type components of $\cbar\sm f^{-n}(\G)$. 

Define a map $f_{\G}$ on $\UU$ by $f_{\G}(U)=V$ if $f(U^1)=V$, where $U^1\in\UU^1$ is the element parallel to $U$ rel $\PPP_f$. Since $\G$ is completely stable, for each component $U\in\UU$,  there is a unique component $U^1\in\UU^1$ parallel to $U$ rel $\PPP_f$ and $f(U^1)$ is a component of $\,\UU$. Hence $f_{\G}$ is well-defined on $\UU$. Since $\UU$ is a finite collection, every $U\in\UU$ is eventually $f_{\G}$-periodic. In other words, If $U\in\UU$ is periodic with period $p\ge 1$, then there is a unique complex type component $U^p\in\UU^p$ parallel to $U$ rel $\PPP_f$ such that $f^p(U^p)=U$. Theorem \ref{thm:combinatorial} is proved by the following steps. 

\vskip 0.24cm
\noindent {\bf Step 1: Deform $f$ to a standard form $F$. }

A multi-annulus is a finite collection of annuli in $\cbar\sm\PPP_f$ with finite modulus such that they are pairwise disjoint and pairwise non-isotopic rel $\PPP_f$. Choose a multi-annulus $\AAA\subset\cbar\sm\PPP_f$ isotopic to $\G$ rel $\PPP_f$ such that its boundary $\pa \AAA$ is a disjoint union of Jordan curves in $\cbar\sm\PPP_f$. Let $\AAA^1$ be the union of all the essential components (which are isotopic to curves in $\G$ rel $\PPP_f$) of $f^{-1}(\AAA)$. Because $\G$ is pre-stable, so for each $\g\in\G$, there is at least one component of $\AAA^1$ isotopic to $\g$ rel $\PPP_f$. 

For each $\g\in\G$, denote by $A^*({\g})$ the smallest annulus containing all the components of $\AAA^1$ which are isotopic to $\g$ rel $\PPP_f$. Then its boundary are two Jordan curves in $\cbar\sm\PPP_f$ isotopic to $\g$ rel $\PPP_f$. Set $\AAA^*(\G)=\cup_{\g\in\G}A^*({\g})$. Then there exists a neighborhood $\NNN$ of $\PPP_f$ and a homeomorphism $\theta$ of $\cbar$ such that $\theta$ is isotopic to the identity rel $\NNN$ and $\theta(\AAA)=\AAA^*(\G)$.  Set $F=f\circ\theta$. Note that $\PPP_F=\PPP_f$. Then each essential component of $F^{-1}(\AAA)$ is contained in $\AAA$ and $\pa\AAA\subset\pa F^{-1}(\AAA)$. 
Such $F$ is called a {\bf standard form} of $f$ with respect to $\G$. 

\vskip 0.24cm 
\noindent {\bf Step 2: Define a new branched covering for each periodic component $U$. }

Suppose that $U\in\UU$ is a periodic component of $f_{\G}$ with period $p\geq 1$. Then there is a unique component $W$ of $\cbar\sm\AAA$ isotopic to $U$ rel $\PPP_f$ and $F^{-p}(W)$ has a unique component $W^p\subset W$ such that $\pa W\subset\pa W^p$. Let $\theta_1$ be an isotopy from $U$ to $W$ and $\theta_2$ be an isotopy from $U^p$ to $W^p$ such that $\theta_1\circ f^p=F^p\circ\theta_2$. For each component $\g$ of $\partial W^p$, denote by $D(\g)$ the component of $\cbar\sm\g$ disjoint from $W^p$. For each curve $\g\in\pa W$, mark one point $a_{\g}\in D(\g)\cap (\cbar\sm U)$ and denote by $\PPP_U$ the set of these marked points. 

For each component $\alpha$ of $\partial W^p$, $\g:=F^p(\alpha)$ is a component of $\partial W$. There is a branched covering $H_{\alpha}:\,D(\alpha)\to D(\g)$ such that
\begin{itemize}
\item $H_{\alpha}$ can be extended continuously to the boundary and $H_{\alpha}=F^p$ on $\alpha$,
\item $H_{\alpha}$ has exactly one critical value at $a_{\g}$ if $\deg(F^p:\,\alpha\to\g)>1$,
\item if $\alpha$ is a curve of $\partial W$, then $H_{\alpha}(a_{\alpha})=a_{\g}$ and $H_{\alpha}$ is holomorphic in a neighborhood of $a_{\alpha}$.
\end{itemize}
Define a branched covering $G:\,\cbar\to\cbar$ by:
$$
G=
\begin{cases}
F^p & \text{on } \ol{W^p}, \\
H_{\alpha} & \text{on } D(\alpha) \text{ for each component }\alpha \text{ of } \pa W^p. 
\end{cases}
$$
Then $G$ is a branched covering. Denote by $\wt\PPP=\PPP_U\cup(\PPP_f\cap W)$. Then it is a finite set containing $\PPP_G$ satisfying $G(\wt\PPP)\subset\wt\PPP$. Hence $(G,\wt\PPP)$ is a marked branched covering.

\vskip 0.24cm 
\noindent {\bf Step 3: Get the new rational map by Thurston theorem. }

For any stable multicurve $\G_1$ of $(G,\wt\PPP)$, one may assume that each curve in $\G_1$ is contained in $W$ since each component of $\cbar\sm W$ contains at most one point of $\wt\PPP$. Let $M_1$ and $M_2$ be the transition matrices of $\G_1$ under $F^p$ and $(G,\wt\PPP)$, respectively. Then $M_1\ge M_2$ and hence $\lambda(M_1)\ge\lambda(M_2)$. Because $F^p$ is Thurston equivalent to $f^p$ through $\theta_1$ and $\theta_2$, so $F^p$ has no Thurston obstructions and then $\lambda(M_1)<1$. Hence $(G,\wt\PPP)$ has no Thurston obstructions. 

By Theorem \ref{thm:thurston}, the marked branched covering $(G,\wt\PPP)$ is Thurston equivalent to a marked rational map $(g,\PPP)$, i.e., there exist orientation preserving homeomorphisms $(\phi_0,\phi_1)$ of $\cbar$ such that $\phi_0$ is isotopic to $\phi_1$ rel $\wt\PPP$ and $\phi_0\circ G=g\circ\phi_1$ on $\cbar$. Since $G$ is holomorphic in a neighborhood of $\wt{\PPP}$, one may choose $\phi_0$ such that both $\phi_0$ and $\phi_1$ are holomorphic in a neighborhood of $\wt{\PPP}$.

\vskip 0.24cm
\noindent {\bf Step 4: Relate $g$ to $f$}

Since $G(\PPP_U)\subset\PPP_U$, each point in $\PPP_U$ is eventually periodic. Moreover, each cycle in $\PPP_U$ contains critical points of $G$, for otherwise it will induce a Levy cycle of $F^p$ and it contradict that $F^p$ has no Thurston obstructions. Denote $\PPP_0=\phi_0(\PPP_U)$. Then $g(\PPP_0)\subset\PPP_0$ and each cycle in $\PPP_0$ is super-attracting. Since each component of $\cbar\sm\phi_0(\overline{W})$ contains exactly one point of $\PPP_0$ and disjoint from $\PPP\sm\PPP_0$, $\phi_0$ can be chosen such that $\cbar\sm\phi_0(\overline{W})\subset\FFF_{g}$, and each component of $\cbar\sm\phi_0(\overline{W})$ is a round disk in B\"{o}ttcher coordinates of the super-attracting domains of $g$. Thus $\phi_1(W^p)=g^{-1}(\phi_0(W))$ is compactly contained in $\phi_0(W)$.

Note that $G=F^p=\theta_1\circ f^p\circ\theta_2^{-1}$ on $W^p$ and $\theta_2^{-1}(W^p)=U^p$. Thus
$$
\phi_0\circ G=\phi_0\circ\theta_1\circ f^p\circ\theta_2^{-1}=g\circ\phi_1\,\text{ on }W^p.
$$
Set $\varphi_0=\phi_0\circ\theta_1$ and $\varphi_1=\phi_1\circ\theta_2$. Then $\varphi_1(U^p)=\phi_1(\theta_2(U^p))=\phi_1(W^p)$ is compactly contained in $\varphi_0(U)=\phi_0(W)$, and $\varphi_0\circ f^p=g\circ\varphi_1\,\text{ on }U^p$. 
$$
\diagram
W^p\drto_{\phi_1} & U^p\dto^{\varphi_1}\lto_{\theta_2}\rto^{f^p} & U\rto^{\theta_1} \dto^{\varphi_0} & W\dlto^{\phi_0}\\
& \phi_1(W^p)=\varphi_1(U^p)\rto^{g} & \varphi_0(U)=\phi_0(W)
\enddiagram
$$

Because each component of $\cbar\sm W$ contains one marked point, the Thurston equivalence class of $g$ is uniquely determined. So the rational map $g$ is unique up to holomorphic conjugacy by Theorem \ref{thm:thurston}. Now the proof of statements (1) is complete.

\vskip 0.24cm
\noindent{\bf Step 5: Put $\JJJ_g$ into $\JJJ_f$. }

For the proof of statement (2), we need to recover a PCF branched covering from the rational map $g$ such that it is Thurston equivalent to $f^p$. Denote $V=\phi_0(W)$ and $V^p=\phi_1(W^p)$. Then $V^p$ is compactly contained in $V$. We may assume that for any component $\alpha$ of $\pa W^p\sm\pa W$, $\phi_0(\alpha)$ is compactly contained in $D(\phi_1(\alpha))$. It is clear that $\JJJ_g$ is compactly contained in $V^p$. 

For each component $\alpha$ of $\pa W^p$, $\phi_0^{-1}\circ\phi_1(\alpha)$ is compactly contained in $W^p$ and is isotopic to $\alpha$ rel $\PPP_f$. Then $\phi_1\circ\phi_0^{-1}\circ\phi_1(\alpha)$ is compactly contained in $V^p$ and is isotopic to $\phi_0(\alpha)$ rel $\PPP$. 

Denote by $\wt{W^p}$ the subset of $W^p$ bounded by $\phi_0^{-1}\circ\phi_1(\alpha)$, where $\alpha\in\pa W^p$. Then $\wt{W^p}\cap \PPP_f=W^p\cap \PPP_f$ and $\JJJ_g$ is compactly contained in $\phi_1(\wt{W^p})$. Since $\phi_1$ is isotopic to $\phi_0$ rel $\wt\PPP$, there is a homeomorphism $\psi_0:\, W\to V$ (see Figure \ref{fig: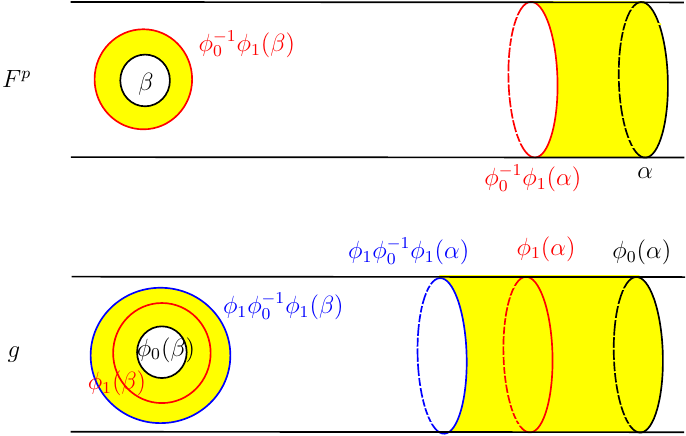}) such that
\begin{itemize}
\item $\psi_0=\phi_1$ on $\wt{W^p}$, 
\item $\psi_0$ is isotopic to $\phi_0|_{W}$ rel $\pa W\cup(W\cap\PPP_f)$. 
\end{itemize}

\begin{figure}[htbp]
\centering
\includegraphics[width=12cm]{Jg.pdf}
\caption{The components $\beta$ of $\pa W^p\sm\pa W$ and $\alpha$ of $\pa W(\subset\pa W^p)$ are marked. The homeomorphism $\psi_0$ maps the yellow annuli for $F^p$ to the yellow annuli for $g$.}
\label{fig:Jg.pdf}
\end{figure}

Define
\begin{equation*}
\wt{F}=
\begin{cases}
\psi_0^{-1}\circ g\circ \phi_1 & \text{on }W^p\\
F^p & \text{on }\cbar\sm W^p.
\end{cases}
\end{equation*}
Recall that $\phi_0^{-1}\circ g\circ\phi_1=G=F^p$ on $\pa W^p$. So $\wt{F}=F^p$ on $\partial W^p$ and hence is a branched covering of $\cbar$. It is easy to check that $\PPP_{\wt{F}}=\PPP_{F}=\PPP_f$. Define
\begin{equation*}
\theta_3=
\begin{cases}
\psi_0^{-1}\circ\phi_0 & \text{on } W^p \\
\text{id} & \text{on }\cbar\sm W^p.
\end{cases}
\end{equation*}
Then $\theta_3$ is a homeomorphism isotopic to the identity relative to a neighborhood of $\PPP_f$ and $\wt{F}=\theta_3\circ F^p$ on $\cbar$. Consequently, $\wt{F}=\theta_3\circ\theta_1\circ f^p\circ\theta_2^{-1}$.

All of the three homeomorphisms $\theta_1,\theta_2$ and $\theta_3$ are isotopic to the identity relative to a neighborhood of $\PPP_f$. Thus $\wt{F}$ is Thurston equivalent to $f^p$ and satisfies the condition of Theorem \ref{thm:semiconjugacy}. Hence there is a semi-conjugacy $h$ from $\wt{F}$ to $f^p$.

Since $\psi_0=\phi_1$ on $\wt{W^p}$, we have $\wt{F}=\phi_1^{-1}\circ g\circ\phi_1$ in a neighborhood of $\phi_1^{-1}(\JJJ_g)$. For any point $z\in h^{-1}(\FFF_f)$, its forward orbit $\{\wt{F}^n(z)\}$ converges to a super-attracting cycle in $\PPP_f$. On the other hand, for any point $z\in\phi_1^{-1}(\JJJ_g)$, its forward orbit $\{\wt{F}^n(z)\}$ is always contained in $\phi_1^{-1}(\JJJ_g)$, which is disjoint from super-attracting cycles in $\PPP_f$. It follows that $\phi_1^{-1}(\JJJ_g)\subset h^{-1}(\JJJ_f)$. Set $\pi=h\circ\phi_1^{-1}$. Then $\pi(\JJJ_g)\subset\JJJ_f$ and $f^p\circ\pi=\pi\circ g$ on $\JJJ_g$. Now the proof of statement (2) is complete. \hfill$\square$

\section{Coiling curves separate small Julia sets}
In this section, we will prove Theorem \ref{thm:disjoint}. Before the proof, we give some dynamical properties on curves. 

\subsection{Dynamics on curves}\label{sec:dynamicsoncurves}
Let $(F,\PPP)$ be a marked branched covering and $\G$ be a completely stable multicurve of $(F,\PPP)$. A curve $\g\in\G$ is {\bf periodic} if there exists an integer $p\ge 1$ such that $F^{-p}(\g)$ contains a curve isotopic to $\g$ rel $\PPP$. For a periodic curve $\g\in\G$, if the collection $\OO_{\g}:=\{\g_0=\g,\g_1,\cdots,\g_{p-1}\}$ satisfies that 
\begin{itemize}
\item $f^{-1}(\g_{i})$ has a component isotopic to $\g_{i-1}$ rel $\PPP$ (set $\g_p=\g_0=\g$);
\item $\g_i$ is not isotopic to $\g_j$ rel $\PPP$ for any $i\neq j$, 
\end{itemize}
then $\OO_{\g}$ is a called a {\bf cycle} containing $\g$. A curve $\g\in\G$ is {\bf pre-periodic} if $\g$ is isotopic to some component of $F^{-k}(\alpha)$ for some periodic curve $\alpha$ and some integer $k\geq 1$. Each periodic curve $\g\in\G$ {\bf generates a completely stable multicurve $\mathbf{\Lambda_{\g}}\subset\G$} as the following: $\beta\in\Lambda_{\g}$ if there is an integer $k\ge 1$ such that $F^{-k}(\g)$ contains a curve isotopic to $\beta$ rel $\PPP$. 

\begin{lem}\label{lem:pre-periodic}
Each curve in a pre-stable multicurve is pre-periodic. 
\end{lem}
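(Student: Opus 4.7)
The plan is to build the pre-periodicity of each $\g\in\G$ from a \emph{predecessor} self-map on $\G$, together with a standard isotopy-lifting fact for branched covers. By pre-stability, for every $\g\in\G$ there exists $\beta\in\G$ such that some component of $F^{-1}(\beta)$ is isotopic to $\g$ rel $\PPP$. Making one such choice for every $\g$ defines a function $\sigma:\G\to\G$. Because $\G$ is finite, the forward orbit $\g,\sigma(\g),\sigma^{2}(\g),\ldots$ is eventually periodic: there exist $m\geq 0$ and $p\geq 1$ with $\alpha:=\sigma^{m}(\g)$ satisfying $\sigma^{p}(\alpha)=\alpha$.

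Next, I would promote the one-step relation ``$F^{-1}(\sigma(\cdot))$ has a component isotopic to $(\cdot)$'' into an $n$-step relation. The tool is the following standard fact, which I would state as a small lemma: if $\delta_1,\delta_2\subset\cbar\sm\PPP$ are Jordan curves isotopic rel $\PPP$, and if $(F,\PPP)$ is a marked branched covering (so $F(\PPP)\subset\PPP$, whence $\PPP\subset F^{-1}(\PPP)$), then the isotopy lifts through $F$ to a bijection between the components of $F^{-1}(\delta_1)$ and of $F^{-1}(\delta_2)$ with corresponding components isotopic rel $F^{-1}(\PPP)$, and therefore rel $\PPP$. Applied to the chain $\sigma^{i-1}(\g),\sigma^{i}(\g)$ inductively, this yields a component of $F^{-i}(\sigma^{i}(\g))$ isotopic to $\g$ rel $\PPP$ for every $i\geq 0$.

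Specializing to $i=p$ and starting from $\alpha$ in place of $\g$, the relation $\sigma^{p}(\alpha)=\alpha$ gives a component of $F^{-p}(\alpha)$ isotopic to $\alpha$ rel $\PPP$, so $\alpha$ is periodic in the sense of the definition in \S\ref{sec:dynamicsoncurves}. Specializing to $i=m$ gives a component of $F^{-m}(\alpha)$ isotopic to $\g$ rel $\PPP$. If $m\geq 1$, this is exactly the statement that $\g$ is pre-periodic with witness $(\alpha,m)$. If $m=0$, then $\g=\alpha$ is periodic and one can take $\alpha$ itself and $k=p\geq 1$, because the periodicity of $\g$ furnishes a component of $F^{-p}(\g)$ isotopic to $\g$ rel $\PPP$. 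Either way, $\g$ is pre-periodic.

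The main obstacle is a careful formulation of the isotopy-lifting statement above: one must ensure that the lifted isotopy is rel $\PPP$, which relies on $F(\PPP)\subset\PPP$ (built into the definition of a marked branched covering) so that $\PPP\subset F^{-1}(\PPP)$ and hence isotopies rel $F^{-1}(\PPP)$ restrict to isotopies rel $\PPP$. Once that lemma is in hand, the rest of the argument is a pigeonhole on the finite set $\G$ together with straightforward induction, and uses nothing beyond the definitions of pre-stability, periodicity, and pre-periodicity already recorded.
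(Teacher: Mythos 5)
Your proof is correct and follows essentially the same route as the paper: use pre-stability to build a backward chain of curves in $\G$, pigeonhole on the finite set $\G$ to get a repeated curve, and conclude that the repeated curve is periodic and hence the original one is pre-periodic. The only real difference is cosmetic — you package the chain as iterates of a predecessor map $\sigma$ and state the isotopy-lifting step explicitly, which the paper leaves implicit.
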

\begin{proof}
Let $\G$ be a pre-stable multicurve. Then for each curve $\g\in\G$, there is a curve $\g_1\in\G$ such that $\g$ is isotopic to some component of $F^{-1}(\g_1)$. Inductively, there is a sequence $\{\g_i\}$ in $\G$ with $\g_0=\g$ such that $F^{-1}(\g_{i+1})$ contains a curve isotopic to $\g_i$ rel $\PPP$ for $i\ge 0$. Because $\G$ contains finitely many curves, there are integers $0\le i_1\le i_2\le\#\G$ such that $\g_{i_1}$ is isotopic to $\g_{i_2}$ rel $\PPP$, i.e., $\g_{i_1}$ is periodic. Hence $\g$ is pre-periodic. 
\end{proof}

Recall that for $\g\in\G$, $\kappa_n(\g,\G)$ is the number of curves in $F^{-n}(\G)$ isotopic to $\g$ rel $\PPP$. The curve $\g$ is called coiling in $\G$ if $\kappa_n(\g,\G)$ tends to infinity as $n$ tends to infinity. Note that by definition $\kappa_{n+1}(\alpha,\G)\geq\kappa_n(\g,\G)$ for each essential curve $\alpha$ in $F^{-1}(\g)$. So if $\g$ is a coiling curve of $\G$, then each curve of $\Lambda_{\g}$ is a coiling curve of $\,\G$. 

\begin{lem}\label{lem:periodcoiling}
Let $\OO_{\alpha}$ and $\OO_{\g}$ be two cycles in $\,\G$ containing $\alpha$ and ${\g}$ respectively such that $\g\notin\OO_{\alpha}$ and some component of $F^{-m}(\alpha)$ is isotopic to a curve in $\OO_{\g}$ rel $\PPP$ for some integer $m\geq 1$. 
Then $\g$ is a coiling curve of $\,\G$. 
\end{lem}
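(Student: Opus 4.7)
Set $p=|\OO_\alpha|$ and $q=|\OO_\gamma|$, label $\OO_\alpha=\{\alpha_0=\alpha,\dots,\alpha_{p-1}\}$ and $\OO_\gamma=\{\gamma_0=\gamma,\dots,\gamma_{q-1}\}$ so the cycle relations from the definition hold, and fix $j$ so the hypothesis supplies a component $\delta\subset F^{-m}(\alpha)$ isotopic to $\gamma_j$ rel $\PPP$. The strategy is to construct, for every triple $(i,\ell,k)\in\{0,\dots,p-1\}\times\mathbb{Z}_{\geq 0}^2$, a distinct component $\eta_{i,\ell,k}\subset F^{-n}(\alpha_i)\subset F^{-n}(\G)$ isotopic to $\gamma$ rel $\PPP$, where $n=i+\ell p+m+j+kq$.

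The construction proceeds in three pullback stages: pull back $\alpha_i$ along the $\OO_\alpha$-cycle for $i+\ell p$ preimages, landing on a curve isotopic to $\alpha$; apply the hypothesis (valid by isotopy lifting, since $\PPP\subset F^{-1}(\PPP)$) to take an $m$-fold preimage isotopic to $\gamma_j$; and pull back along the $\OO_\gamma$-cycle for $j+kq$ preimages to obtain $\eta_{i,\ell,k}$. Distinctness of the triples translates into distinctness of the components. To see this, trace the forward orbit of isotopy classes $[F^r(\eta_{i,\ell,k})]$ for $r=0,\dots,n$: the terminal class $[\alpha_i]$ determines $i$, and the count of indices $r$ at which $[F^r(\eta_{i,\ell,k})]=[\gamma]$ equals $k+1$ plus a constant $c_m$ depending only on the $m$-chain (this uses $\gamma\notin\OO_\alpha$, which forbids $[\gamma]$ from appearing during the $\OO_\alpha$-cycling portion of the orbit). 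Hence $k$ is recovered from the $\gamma$-visit count, and $\ell$ then follows from $n=i+\ell p+m+j+kq$.

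For $n$ sufficiently large, $n-i-m-j$ as $i\in\{0,\dots,p-1\}$ traverses $p$ consecutive integers and hence covers every residue modulo $\gcd(p,q)$; so some $i$ makes the Diophantine equation $\ell p+kq=n-i-m-j$ solvable over $\mathbb{Z}_{\geq 0}$ with $\Omega(n/\mathrm{lcm}(p,q))$ solutions. Therefore $\kappa_n(\gamma,\G)\geq\Omega(n)\to\infty$, confirming that $\gamma$ is coiling in $\G$.

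\textbf{Main obstacle.} The principal subtlety is the distinctness claim. The two cycles $\OO_\alpha$ and $\OO_\gamma$ may share vertices in $\G$, and the intermediate isotopy classes of the $m$-chain may even coincide with $\OO_\alpha$-vertices; so walks built for different $(\ell,k)$ could overlap substantially, and a naive vertex-by-vertex distinctness argument would not suffice. The resolution is the visit-count bookkeeping for $[\gamma]$: because $\gamma\notin\OO_\alpha$, every visit to $[\gamma]$ occurs outside the $\OO_\alpha$-cycling phase, leaving $k$ cleanly determined by the orbit. Executing this bookkeeping carefully—verifying that $c_m$ really is independent of $(\ell,k)$—is where I expect the proof to concentrate its effort; after that, the Diophantine count is routine.
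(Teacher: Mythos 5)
Your proof is correct but follows a genuinely different route from the paper's. The paper pulls back $\alpha^m$ (an $m$-fold preimage of $\alpha$ isotopic to $\gamma$) repeatedly along the $\OO_\gamma$-cycle to obtain curves $\alpha^{m+kp}$ at levels $m+kp$, and proves these are pairwise \emph{disjoint in $\cbar$} via a contradiction argument (after replacing $\alpha$ by a deeper preimage $F^{-lq}(\alpha)$ so that $F^p(\alpha^m)$ is a Jordan curve, and using $\gamma\notin\OO_\alpha$ to say that $F^p(\alpha^m)$ cannot be isotopic to $\gamma$); it then reads off $\kappa_{m+kp}(\gamma,\G)\geq k$. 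Your argument instead splices three walks — $\OO_\alpha$-cycling, the fixed $m$-chain, $\OO_\gamma$-cycling — so that all constructed curves $\eta_{i,\ell,k}$ lie in $F^{-n}(\G)$ at one common level $n$, and proves distinctness not geometrically but combinatorially, by recovering the triple from the forward orbit of isotopy classes (terminal class gives $i$, the $[\gamma]$-visit count gives $k$, then $\ell$ from $n$); $\gamma\notin\OO_\alpha$ enters precisely to keep the $\OO_\alpha$-phase visit count at zero. A Diophantine count then gives $\kappa_n=\Omega(n)$. What each buys: the paper's version is shorter, but its final inference — that disjoint curves $\alpha^{m+jp}$ sitting at \emph{different} preimage depths $m+jp$ yield $k$ distinct members of $\CC_{m+kp}(\gamma,\G)$ — requires an unwritten step of descending each $\alpha^{m+jp}$ to the common level $m+kp$ and checking the lifts remain distinct, which does not follow from disjointness across levels alone. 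Your construction produces curves at one level by design and so sidesteps this; it also dispenses with the WLOG reductions (minimality of $m$, the target being $\gamma$ itself) and the $F^{-lq}$-replacement. The one detail you flag — that $c_m$ is independent of $(\ell,k)$ — is genuinely routine: the isotopy-class sequence of the $m$-chain is preserved under lifting rel $\PPP$, since $\PPP\subset F^{-1}(\PPP)$.
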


\begin{proof}
Note that if a curve in a cycle is coiling in $\G$, then every curve in the cycle is coiling in $\G$. So we only need to prove that a curve in $\OO_{\g}$ is coiling in $\G$. Without loss of generality, we may assume that $\OO_{\g}$ is the cycle such that the number $m$ is the smallest integer satisfying the condition in the lemma. 

\begin{figure}[htbp]
\centering
\includegraphics[width=8cm]{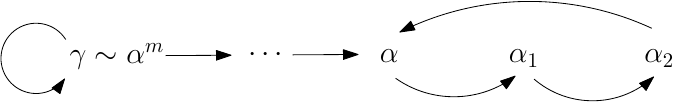}
\caption{A cycle $\{\alpha,\alpha_1,\alpha_2\}$ not containing $\g$.}
\label{fig:relation}
\end{figure}
We may also assume that the curve in $\OO_{\g}$ which is isotopic to a component of $F^{-m}(\alpha)$ rel $\PPP$ is $\g$ (see Figure \ref{fig:relation}). Since $\g$ is periodic, there exists an integer $p\geq 1$ such that some component $\g^p$ of $F^{-p}(\g)$ is isotopic to $\g$ rel $\PPP$. Then a component $\alpha^{m+p}$ of $F^{-p}(\alpha^m)$ is isotopic to $\g$ rel $\PPP$. We prove by contradiction that 
$$
\alpha^{m+p}\cap\alpha^m=\emptyset.
$$ 
Let $q\geq 1$ be the period of $\OO_{\alpha}$. By replacing $\alpha$ by some component of $F^{-lq}(\alpha)$ isotopic to $\alpha$ rel $\PPP$ if necessary for some $l\geq 1$, we assume $F^p(\alpha^m)$ is a Jordan curve. If $\alpha^{m+p}\cap\alpha^m\neq\emptyset$, then $\alpha^m\cap F^p(\alpha^m)=F^p(\alpha^{m+p})\cap F^p(\alpha^m)\supset F^p(\alpha^{m+p}\cap\alpha^m)\neq\emptyset$. On the other hand, $F^p(\alpha^m)$ is either not periodic or lies in the cycle not containing $\g$. This contradicts $\alpha^m\cap F^p(\alpha^m)\neq\emptyset$. 

Denote by $\alpha^{m+kp}$ a component of $F^{-kp}(\alpha^m)$ isotopic to $\g$ rel $\PPP$. Then by the same argument $
\alpha^{m+k_1p}\cap\alpha^{m+k_2p}=\emptyset \text{ if } k_1\neq k_2$. Hence $\kappa_{m+kp}(\g,\G)\geq k\to\infty$ as $k\to\infty$. In other words, $\g$ is a coiling curve of $\,\G$. 
\end{proof}

\begin{lem}\label{lem:unique}
Let $\g\in\G$ be a periodic curve and $\Lambda_{\g}\subset \G$ is the completely stable multicurve generated by $\g$. If $\g$ is not coiling in $\G$, then $\kappa_n(\g,\G)=\kappa_n(\g,\Lambda_{\g})=1$ for all $n\geq 1$. 
\end{lem}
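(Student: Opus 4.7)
The plan is to prove both $\kappa_n(\g,\G)=1$ and $\kappa_n(\g,\Lambda_\g)=1$ by first reducing to $\Lambda_\g$ and then establishing uniqueness through an isotopy-lifting growth argument.

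For the reduction $\kappa_n(\g,\G)=\kappa_n(\g,\Lambda_\g)$, I show every component $\alpha$ of $F^{-n}(\G)$ isotopic to $\g$ already lies in $F^{-n}(\Lambda_\g)$. Setting $\beta:=F^n(\alpha)\in\G$, Lemma \ref{lem:pre-periodic} gives $\beta$ pre-periodic, so $\beta$ is isotopic rel $\PPP$ to a component of $F^{-k_0}(\alpha')$ for some periodic $\alpha'\in\G$ and $k_0\ge 0$. Isotopy lifting through $F^n$ (valid because $F\colon\cbar\setminus F^{-1}(\PPP)\to\cbar\setminus\PPP$ is a covering and $F(\PPP)\subset\PPP$, so that an isotopy rel $\PPP$ lifts to iterated preimages with a component-wise, isotopy-class-preserving bijection) produces a component of $F^{-(n+k_0)}(\alpha')$ isotopic to $\g$. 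If $\g\notin\OO_{\alpha'}$, Lemma \ref{lem:periodcoiling} applied to $\alpha'$ and $\g$ would force $\g$ to be coiling, contradicting the hypothesis. Hence $\g\in\OO_{\alpha'}$, so $\alpha'$ is itself isotopic to some component of $F^{-k'}(\g)$ via the cycle structure, and chaining isotopies places $\beta\in\Lambda_\g$. The lower bound $\kappa_n(\g,\Lambda_\g)\ge 1$ for every $n$ then follows from pre-stability of $\Lambda_\g$ and $\g\in\Lambda_\g$ by induction on $n$.

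For the upper bound $\kappa_n(\g,\Lambda_\g)\le 1$ I argue by contradiction, using isotopy lifting to force unbounded growth of iso-$\g$ counts. First, $\mu_p:=\#\{\text{components of }F^{-p}(\g)\text{ isotopic to }\g\}=1$: isotopy lifting on each such component gives $\mu_{kp}\ge\mu_p^k$, so $\mu_p\ge 2$ would make $\g$ coiling. Now suppose $\kappa_n(\g,\Lambda_\g)\ge 2$ and pick two distinct components $\alpha_1,\alpha_2$ of $F^{-n}(\Lambda_\g)$ isotopic to $\g$; since $\mu_n\le 1$, at most one of $\beta_i:=F^n(\alpha_i)$ equals $\g$, so after relabeling $\beta_2\ne\g$. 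The cycle analysis of paragraph one, combined with backward isotopy lifting through the pre-periodic portion of $\beta_2$'s orbit if needed, reduces to an extra component isotopic to $\g$ in $F^{-m}(\g_j)$ for some $\g_j\in\OO_\g$ with $j\ne 0$ and $m\ge 1$. Setting $a_k$ to be the number of components of $F^{-kp}(\g_j)$ isotopic to $\g$, the decomposition $F^{-(k+1)p}(\g_j)=F^{-p}(F^{-kp}(\g_j))$ together with isotopy lifting applied class by class — each iso-$\g$ component at level $kp$ contributes $\mu_p=1$ iso-$\g$ component through $F^{-p}$, and the natural iso-$\g_j$ cycle component contributes $a_1$ iso-$\g$ components — yields the recurrence $a_{k+1}\ge a_k+a_1$. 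Using minimality of $n$ (or an extra isotopy-lifting reduction that propagates the extra at level $m$ down to level $p$), one secures $a_1\ge 1$, so $a_k\ge ka_1\to\infty$ and $\kappa_{kp}(\g,\G)\to\infty$, contradicting $\g$ non-coiling.

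The main obstacle will be pinning down $a_1\ge 1$ from the mere existence of an extra iso-$\g$ component at some level $m\ge 1$, since the recurrence gives monotonicity $a_{k+1}\ge a_k$ for free but needs the positive base case $a_1\ge 1$ to produce linear growth. The pre-periodic case (when $\beta_2$ is pre-periodic but not periodic) also requires reduction to the periodic case, carried out by backward isotopy lifting through the pre-periodic part of $\beta_2$'s orbit into its attracting cycle in $\OO_\g$.
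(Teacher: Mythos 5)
Your first part (the reduction $\kappa_n(\g,\G)=\kappa_n(\g,\Lambda_{\g})$) follows the paper's route: Lemma~\ref{lem:pre-periodic} reduces to periodic $\alpha'$, and Lemma~\ref{lem:periodcoiling} rules out $\g\notin\OO_{\alpha'}$; that is fine. The second part, however, has a genuine gap that you yourself flag, and I don't think it can be repaired within your recurrence framework. You need the base case $a_1\geq 1$, i.e.\ an iso-$\g$ component of $F^{-p}(\g_j)$, but having an iso-$\g$ component of $F^{-m}(\g_j)$ for \emph{some} $m\geq 1$ gives you nothing at level $p$: the cycle $\OO_\g$ produces an iso-$\g$ component of $F^{-j}(\g_j)$ (not $F^{-p}(\g_j)$; at level $p$ the cycle component is isotopic to $\g_j$ itself), and there is no reason the level $m$ of your extra component lies in the right residue class mod $p$. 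Worse, the recurrence $a_{k+1}\geq a_k+a_1$ needs, at each step, an iso-$\g_j$ component of $F^{-kp}(\g_j)$ to pull $a_1$ through, and simultaneously needs the extra component to replicate along the same arithmetic progression of levels; these two requirements are misaligned because $m$ and $0$ are in general different residues mod $p$. ``Minimality of $n$'' does not resolve this.

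The paper's proof avoids the recurrence entirely. After observing, as you do, that $\kappa_n(\g,\{\g\})\leq 1$ for all $n$ (else $k\geq 2$ iso-$\g$ preimages at some level $n_0$ compound to $k^m$ at level $mn_0$), it fixes a single integer $N>0$ such that the isotopy classes of $\Lambda_{\g}$ coincide with those of the essential curves in $F^{-N}(\g)$ --- such $N$ exists because $\Lambda_{\g}$ is finite and, $\g$ being periodic, any isotopy class appearing in $F^{-k}(\g)$ reappears in $F^{-(k+p)}(\g)$. Then for any iso-$\g$ component $\alpha$ of $F^{-n}(\beta)$ with $\beta\in\Lambda_{\g}$, lift the isotopy $\beta\simeq\beta'\subset F^{-N}(\g)$ through $F^n$ to get $\alpha'\subset F^{-n}(\beta')\subset F^{-(n+N)}(\g)$ isotopic to $\g$. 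The assignment $\alpha\mapsto\alpha'$ is injective (non-isotopic $\beta$'s give non-isotopic $\beta'$'s, hence disjoint preimages; within one $\beta$, isotopy lifting is a bijection on components), so $\kappa_n(\g,\Lambda_{\g})\leq\kappa_{n+N}(\g,\{\g\})\leq 1$, while $\kappa_n(\g,\Lambda_{\g})\geq 1$ follows from pre-stability. That one-shot injection is the key idea missing from your approach.
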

\begin{proof}
We first claim that $\kappa_n(\g,\G)=\kappa_n(\g,\Lambda_{\g})$. In other words, for any curve $\alpha\in\G\sm\Lambda_{\g}$ and any integer $n\geq 0$, any component of $F^{-n}(\alpha)$ is not isotopic to $\g$ rel $\PPP$. Let $\alpha\in\G\sm\Lambda_{\g}$ be a curve which is not periodic. Then there is a periodic curve $\beta\in\G$ such that some components of $F^{-m}(\beta)$ is isotopic to $\alpha$ rel $\PPP$ by Lemma \ref{lem:pre-periodic}. If some component of $F^{-k}(\alpha)$ is isotopic to $\g$ rel $\PPP$ for some $k\geq 1$, then some component of $F^{-m-k}(\beta)$ will be isotopic to $\g$ rel $\PPP$. Therefore if the claim is true for all periodic curve in $\,\G\sm\Lambda_{\g}$, then it is true for all curves in $\,\G\sm\Lambda_{\g}$. Assume that the claim is not true for a periodic curve $\alpha\in\G\sm\Lambda_{\g}$. Then by Lemma \ref{lem:periodcoiling}, $\g$ will be a coiling curve of $\G$. This is a contradiction. So $\kappa_n(\g,\G)=\kappa_n(\g,\Lambda_{\g})$. 

Now we prove that $\kappa_n(\g,\Lambda_{\g})=1$. Since $\g$ is not coiling in $\G$, for any $n\geq 1$, $F^{-n}(\g)$ has at most one curve isotopic to $\g$ rel $\PPP$. In fact, if there is an integer $n_0\geq 1$ such that $F^{-n_0}(\g)$ has $k\geq 2$ curves isotopic to $\g$ rel $\PPP$, then $\kappa_{m\cdot n_0}(\g,\G)\geq k^m\to\infty$ as $m\to\infty$. Note that $\kappa_{n+1}(\g,\G)\geq\kappa_n(\g,\G)$ since $\G$ is pre-stable. Hence $\g$ is coiling in $\G$. This is a contradiction. By the definition of $\Lambda_{\g}$, there is an integer $N>0$, such that the isotopy classes of $\Lambda_{\g}$ equals the isotopy classes of essential curves in $F^{-N}(\g)$. So $1\leq\kappa_n(\g,\Lambda_{\g})\leq\kappa_{n+N}(\g,\{\g\})=1$ for all $n\geq 1$.  
\end{proof}

\begin{lem}\label{lem:aperiodic-coiling}
Suppose $\G$ is completely stable. If $\g$ is a coiling curve of $\,\G$ but it is not periodic, then there exists a periodic coiling curve $\alpha$ and an integer $k\geq 1$ such that $\g$ is isotopic to some component of $F^{-k}(\alpha)$ rel $\PPP$. 
\end{lem}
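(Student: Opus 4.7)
My plan is a proof by contradiction combining the three preceding lemmas. Since $\G$ is completely stable and in particular pre-stable, Lemma \ref{lem:pre-periodic} already furnishes some periodic $\alpha_{0}\in\G$ and some integer $k\ge 1$ with $\g$ isotopic to a component of $F^{-k}(\alpha_{0})$ rel $\PPP$, so that $\g\in\Lambda_{\alpha_{0}}$; the existence of a periodic ancestor is thus automatic, and the substance of the lemma is to show that some such ancestor is in fact coiling.

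Assume, for contradiction, that every periodic $\alpha\in\G$ with $\g\in\Lambda_{\alpha}$ fails to be coiling in $\G$. Then Lemma \ref{lem:unique} yields $\kappa_{n}(\alpha,\G)=1$ for all $n\ge 1$ and all such $\alpha$, so the preimage-isotopy counts within any cycle $\OO_{\alpha}$ of a non-coiling periodic ancestor behave like a cyclic permutation. In addition, the contrapositive of Lemma \ref{lem:periodcoiling} forbids descent between distinct non-coiling cycles: if $\alpha_{1},\alpha_{2}$ are periodic with $\alpha_{2}\in\Lambda_{\alpha_{1}}\setminus\OO_{\alpha_{1}}$, then $\alpha_{2}$ would be coiling, contrary to hypothesis. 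Hence the cycles of periodic ancestors of $\g$ form an antichain with respect to the preimage-descent relation on $\G$.

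Using this rigidity I would bound $\kappa_{n}(\g,\G)$ directly. Consider the directed graph with vertices $\G$ and edges $\beta\to\delta$ of multiplicity equal to the number of components of $F^{-1}(\beta)$ isotopic to $\delta$ rel $\PPP$. Every essential component of $F^{-n}(\G)$ isotopic to $\g$ yields, by iterated essential preimages, a length-$n$ path in this graph terminating at $\g$. For $n$ large, any such path begins inside one of the non-coiling periodic ancestor cycles $\OO$ and, by the antichain property, visits only that single cycle. The portions outside $\OO$ traverse non-periodic isotopy classes, each visited at most once, so each outside segment has length at most $\#\G$ and admits only boundedly many realizations. The portion inside $\OO$ is a within-cycle excursion of some length $s$, but since the within-cycle matrix is a cyclic permutation there is at most one such excursion of each length joining any two fixed vertices of $\OO$. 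Summing over the finitely many candidate cycles $\OO$, entry--exit pairs, and outside segment lengths produces $\kappa_{n}(\g,\G)\le C(\G)$ uniformly in $n$, contradicting that $\g$ is coiling. The lemma then follows.

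The main obstacle is making the chain-tracking step watertight. Specifically, an essential component of $F^{-n}(\G)$ isotopic to $\g$ may have an $F$-orbit passing through \emph{non-essential} intermediate components of $F^{-j}(\G)$ (curves bounding a disc with exactly one post-critical point whose further preimages may nevertheless be essential and isotopic to curves of $\G$). Such non-essential detours are not directly captured by the essential-preimage graph above, so the bound on $\kappa_{n}(\g,\G)$ must either be extended to cover them by a parallel structural argument, or shown not to contribute in the present completely stable setting.
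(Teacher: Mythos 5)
Your proposal runs along the same lines as the paper's proof: argue by contradiction, assume every periodic $\alpha$ with $\g\in\Lambda_{\alpha}$ is non-coiling, and deduce a uniform bound on $\kappa_n(\g,\G)$. The paper does this in one stroke via the identity $\sum_i l_i\,\kappa_{n-k_i}(\alpha_i,\G)=\kappa_n(\g,\G)$ (with $l_i=\kappa_{k_i}(\g,\{\alpha_i\})$) and reads off boundedness from the non-coiling hypothesis, while you unpack the bookkeeping through a transition-graph and path-counting argument using Lemmas \ref{lem:unique} and \ref{lem:periodcoiling}. Same idea; you just carry out more of the combinatorics explicitly.

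The obstacle you flag at the end is genuine and not a mere detail: an essential component $\delta$ of $F^{-n}(\G)$ isotopic to $\g$ could have a non-essential (peripheral) forward image $F^{j}(\delta)$ for some $0<j<n$, in which case $\delta$ is not produced by any length-$n$ path in the essential transition graph on $\G$, and the path count undershoots $\kappa_n(\g,\G)$. Note, however, that the paper's displayed identity is silent on exactly this point: it tacitly treats every component of $F^{-n}(\G)$ isotopic to $\g$ as factoring, at a fixed level $k_i$, through a curve isotopic to some $\alpha_i$, which is the same no-detour hypothesis. So your concern is an honest gap in your write-up, but it is one shared with the paper's own argument rather than introduced by your method. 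A helpful reduction: by stability, if $F(\delta)$ is essential and isotopic to a curve in $\G$, then so is $\delta$; hence, going backward from $\beta\in\G$, the only way to leave the essential $\G$-chain is through a peripheral curve, and what remains is to show that peripheral components of $F^{-m}(\G)$ do not spawn essential preimages isotopic to $\g$ (or contribute only boundedly) in the completely stable setting. Making that step explicit would tighten both your argument and the published one.
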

\begin{proof}
Let $\{\alpha_i\}_{i=1}^m$ be the collection of all the periodic curves in $\G$ such that some components $F^{-k_i}(\alpha_i)$ are isotopic to $\g$ rel $\PPP$ for some $k_i\geq 1$. By Lemma \ref{lem:pre-periodic}, $\{\alpha_i\}_{i=1}^m\neq\emptyset$. Suppose that each $\alpha_i$ is not coiling, i.e. $\kappa_n(\alpha_i,\G)$ is uniformly bounded for all $n\geq 1$. Then 
$$
\sum_{i=1}^m l_i\cdot\kappa_{n-k_i}(\alpha_i,\G)=\kappa_{n}(\g,\G)
$$
is uniformly bounded for all $n\geq 1$, where $l_i=\kappa_{k_i}(\g,\{\alpha_i\})\geq 1$. This contradicts that $\g$ is a coiling curve of $\G$. 
\end{proof}

\subsection{Proof of Theorem \ref{thm:disjoint}}\label{sec:proof1.2}
Let $f$ be a PCF rational map and $\G$ be a completely stable multicurve. Now suppose that $\g\in\G$ is not coiling in $\G$. Then $\g$ is either periodic with $\kappa_n(\g,\G)=1$ for all $n\geq 1$ by Lemma \ref{lem:unique} or $\g$ is not periodic. If $\g$ is not periodic, then by Lemma \ref{lem:pre-periodic} there is a periodic curve $\alpha$ such that some component of $f^{-m}(\alpha)$ is isotopic to $\g$ rel $\PPP_f$ for some integer $m\geq 1$. Let $\Delta_{\g}$ be the collection of all such $\alpha$. For $\alpha\in\Delta_{\g}$, set 
$$
N_{\g\alpha}:=\min\{m\geq 1: \text{ some component of }f^{-m}(\alpha)\text{ is isotopic to }\g \text{ rel }\PPP_f\}. 
$$
We denote 
$$
N:=\max\{N_{\g\alpha}: \g\in\G \text{ is not coiling in }\G\text{ and } \alpha\in\Delta_{\g}\}. 
$$
Set $\G_N$ the collection of essential curves in $f^{-N}(\G)$. Add a marked point in each annulus bounded by two isotopic curves rel $\PPP_f$ in $\G_N$ and denote by $\PPP_*$ the set of these marked points. We may choose these marked points suitably such that $f(\PPP_*)\subset\PPP_*\cup\PPP_f$. Denote by $\PPP=\PPP_f\cup\PPP_*. $ Then $\G_N$ is a completely stable multicurve of $(f,\PPP)$. 

Because $\g$ is not coiling in $\G$, every $\alpha\in\Delta_{\g}$ is not coiling. So by Lemma \ref{lem:unique}, $\kappa_n(\alpha,\G)=1$ for all $n\geq 1$. Thus for all $\g\in\G$ which is not coiling in $\G$, $\kappa_n(\g,\G)=\kappa_N(\g,\G)$ for all $n\geq N$. Therefore for $\g\in\G_N$, either $\kappa_n(\g,\G_N)=1$ for all $n\geq 1$ or $\kappa_n(\g,\G_N)\to\infty$ as $n\to\infty$. In the following, we will prove that $\PPP$ and $\G_{\PPP}:=\G_N$ satisfy Theorem \ref{thm:disjoint}. 

\subsubsection{Sufficiency}\label{sec:sufficiency}
In this section, we will prove that if $\g$ is a coiling curve in $\G_{\PPP}$, then $\JJJ_1\cap\JJJ_2=\emptyset$. The strategy of the proof follows that in \cite{cui2016renormalizations}: Deform (some iteration of) the marked rational map $(f,\PPP)$ to a branched covering $(F,\PPP)$ in its Thurston equivalence class. Applying Theorem \ref{thm:semiconjugacy}, we obtain a semi-conjugacy from $F$ to (some iteration of) $f$. Denote by $A(\g)$ the annulus isotopic to the curve $\g$ rel $\PPP$. The key point is to prove that if $\g$ is coiling in $\,\G$,  then any two points in different components of $\cbar\sm A(\g)$ will not map to one point under the semi-conjugacy. This implies that $\JJJ_{1}\cap\JJJ_{2}=\emptyset$. 

\vskip 0.24cm
\begin{proof}[Proof of the sufficiency of Theorem \ref{thm:disjoint}]
Denote by $\UU:=\UU_{\G_{\PPP}}$ the collection of components of $\cbar\sm\G_{\PPP}$. Then each component of $\UU$ is pre-periodic under $f_{\G_{\PPP}}$. Let $m$ be the least common multiple of the periods of periodic components of $\UU$.  Then each periodic component of $\cbar\sm\G_{\PPP}$ is fixed under $f^m_{\G_{\PPP}}$. 

Denote by $\AAA_{\PPP}$ the multi-annulus isotopic to $\G_{\PPP}$ rel $\PPP$. As in \S \ref{sec:combinatorial}, denote by $F_0$ the standard form of $f$ corresponding to the multicurve $\G_{\PPP}$. Then for each fixed $U_i$ of $f^m_{\G_{\PPP}}$, we can associate it a $W_i,W_i^1$ and define a branched covering $\psi_{0,i}^{-1}\circ g_i\circ\phi_{1,i}:W_i^1\to W_i$ (refer to Step 5 of \S \ref{sec:combinatorial}). We define a marked branched covering $F$ of $\cbar$ as follows: 
\begin{itemize}
\item On each component $W_i^1$, we define $F$ as $\psi_{0,i}^{-1}\circ g_i\circ \phi_{1,i}$. 
\item On the complement of the union of these $W_i^1$'s, we define $F$ as $F_0^m$. 
\end{itemize}
Then $F$ is Thurston equivalent to $f^m$ rel $\PPP$. Moreover, essential components of $F^{-1}(\AAA_{\PPP})$ are contained in $\AAA_{\PPP}$ and $\pa\AAA_{\PPP}\subset\pa F^{-1}(\AAA_{\PPP})$. By Theorem \ref{thm:semiconjugacy}, there is a semi-conjugacy $h:\cbar\to\cbar$ from $F$ to $f^m$. What's more, $h$ is holomorphic in a neighborhood of attracting periodic points of $F$.  

If $\g$ is a coiling curve, then there exists a completely stable multicurve $\G_{\g}\subset\G_{\PPP}$ such that each curve in $\G_{\g}$ is a coiling curve of $\G_{\PPP}$. In fact, when $\g$ is periodic, take $\Lambda_{\g}$ as $\G_{\g}$. When $\g$ is not periodic, by Lemma \ref{lem:aperiodic-coiling}, there exists periodic coiling curve $\alpha\in\G_{\PPP}$ and an integer $k\geq 1$ such that $\g$ is isotopic to some component of $f^{-k}(\alpha)$ rel $\PPP$. So take $\Lambda_{\alpha}$ as $\G_{\g}$. Denote 
$$
\AAA_0=\bigcup_{\beta\in\G_{\g}} A({\beta}), 
$$
where $A(\beta)$ is the annulus in $\AAA_{\PPP}$ isotopic to $\beta$ rel $\PPP$. 

\begin{lem}\label{lem:key}
For any integer $n\geq 0$ and any two distinct components $E_1$ and $E_2$ of $\cbar\sm F^{-n}(\AAA_0)$, $h(E_1)\cap h(E_2)=\emptyset$.
\end{lem}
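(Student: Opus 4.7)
The plan is induction on $n$ combined with a pullback argument powered by the coiling hypothesis and the semi-conjugacy $h\circ F=f^m\circ h$ from Theorem~\ref{thm:semiconjugacy}.

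I would begin with a structural observation: since $F$ is in standard form and $\G_\g$ is completely stable, every essential component of $F^{-n}(\AAA_0)$ is an annulus isotopic rel $\PPP$ to some $\beta\in\G_\g$ and contained in the corresponding $A(\beta)\subset\AAA_0$. Coiling of every $\beta\in\G_\g$ then guarantees that $A(\beta)$ contains arbitrarily many mutually disjoint essential sub-annuli from $F^{-n}(\AAA_0)$ as $n\to\infty$.

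For the inductive step, let $E_1,E_2$ be distinct components of $\cbar\setminus F^{-(n+1)}(\AAA_0)$, and let $E_1'',E_2''$ be the components of $\cbar\setminus F^{-n}(\AAA_0)$ containing $F(E_1)$ and $F(E_2)$ respectively. If $E_1''\neq E_2''$, the induction hypothesis gives $h(E_1'')\cap h(E_2'')=\emptyset$, and since $f^m(h(E_i))=h(F(E_i))\subset h(E_i'')$ we obtain $h(E_1)\cap h(E_2)=\emptyset$ at once. If $E_1''=E_2''$, then any separating annulus of $F^{-(n+1)}(\AAA_0)$ between $E_1$ and $E_2$ lies inside this common image and is an essential preimage of some annulus in $F^{-n}(\AAA_0)$; this reduces to a pullback argument of the same shape as the base case, localized inside $E_1''$.

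For the base case $n=0$, suppose toward contradiction that $z_0\in h(E_1)\cap h(E_2)$ for distinct components $E_1,E_2$ of $\cbar\setminus\AAA_0$. Theorem~\ref{thm:semiconjugacy}(4) forces $z_0\in\JJJ_{f^m}$, and $K_0:=h^{-1}(z_0)$ is a continuum connecting $E_1$ to $E_2$, hence crossing some coiling annulus $A(\beta)\subset\AAA_0$. Iteratively picking lifts $z_k\in f^{-mk}(z_0)$ and invoking Theorem~\ref{thm:semiconjugacy}(5), I would produce continua $K_k=h^{-1}(z_k)$, each a component of $F^{-k}(K_0)$, that cross essential sub-annuli of $F^{-k}(\AAA_0)$ nested inside $A(\beta)$; coiling of $\beta$ supplies arbitrarily many such nested crossings. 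The contradiction then arises from combining the uniform convergence $\phi_j\to h$ of Theorem~\ref{thm:semiconjugacy} (with each $\phi_j$ injective), additivity of moduli over disjoint essential sub-annuli of $A(\beta)$, and expansion of $f^m$ on $\JJJ_{f^m}$: these ingredients force the $K_k$ to collapse to one side of the nesting, contradicting their boundary-spanning property.

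The main obstacle is this last collapse step, i.e., converting the purely combinatorial coiling input $\kappa_n(\beta,\G_\PPP)\to\infty$ into a genuine topological obstruction against continua persistently connecting both sides of $A(\beta)$. The strategy mirrors and localizes the Cantor-multicurve analysis of \cite{cui2016renormalizations} to the coiling sub-multicurve $\G_\g\subset\G_\PPP$; the delicacy is that non-coiling curves of $\G_\PPP$ do not contribute to the nesting inside $A(\beta)$, so the pullback argument must be confined to the coiling portion without losing the growing refinement.
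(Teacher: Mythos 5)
Your overall shape is right — push the problem forward under $F$ to reduce to the case of a fiber crossing a single component of $\AAA_0$ — and that matches the paper's reduction (the paper does it in one step via $F^{n}$ rather than by induction, but that is cosmetic). The difficulty, however, is concentrated in what you call the ``collapse step,'' and you have correctly flagged it as the main obstacle: your proposal does not actually close it, and the ingredients you list for it (additivity of moduli of annuli, uniform convergence $\phi_j\to h$, expansion of $f^m$) are not the right ones and would not, by themselves, produce the contradiction. The moduli of the sub-annuli of $F^{-k}(\AAA_{\PPP})$ nested in $A(\beta)$ are not bounded below, so additivity gives nothing; and expansion of $f^m$ acts on points, not on fibers of $h$, so it does not directly prevent $K_k$ from straddling $A(\beta)$.

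What the paper actually uses is purely combinatorial-topological and is encapsulated in Lemma~\ref{lem:top}: for a continuum $E$ inside a Jordan domain $D$ and a fixed finite union of Jordan curves $\G$, there is a uniform bound $M$ such that any two points of $E$ can be joined by an arc $\delta\subset D$ with $\#(\delta\cap\G)\le M$. The paper then shows the set $T$ of points whose $h$-fiber crosses $\AAA_0$ is closed and forward invariant, passes to $T_\infty=\bigcap_n f^n(T)$, and extracts a backward orbit in $T_\infty$ that eventually avoids critical points of $f$. This last point is essential and is missing from your proposal: you take arbitrary lifts $z_k\in f^{-mk}(z_0)$, but if these keep passing through critical points, the map $F^{k}$ near $K_k$ is not a homeomorphism and the pullback of a controlled arc would lose its intersection bound. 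With the backward orbit chosen to avoid critical points, $F^n$ restricts to a homeomorphism on a Jordan domain $D_n$ containing $L_n=h^{-1}(z_n)$, and pulling back the $M$-intersection arc $\delta$ through this homeomorphism gives an arc $\delta_n\subset D_n$ joining two points of $L_n$ separated by $\ge M+1$ essential components of $F^{-n}(\G_\PPP)$ (supplied by coiling), a contradiction. You should replace your modulus/expansion heuristic by this intersection-count mechanism and add the critical-point-avoidance step to make the induction-free reduction work.
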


If Lemma \ref{lem:key} is true, then $\JJJ_1\cap\JJJ_2=\emptyset$ since $h^{-1}(\JJJ_1)$ and $h^{-1}(\JJJ_2)$ are separated by $A(\g)$ which is a component of $\AAA_0$. The proof of the sufficiency of Theorem \ref{thm:disjoint} is complete. 
\end{proof}

\vskip 0.24cm
The proof of Lemma \ref{lem:key} based on the following lemma \cite{cui2016renormalizations}. We include their proof here for self-containedness.
	
\begin{lem}\label{lem:top}
Let $\G\subset\cbar$ be a finite disjoint union of Jordan curves. Suppose that $D\subset\cbar$ is a Jordan domain and $E\subset D$ is a continuum. Then there is an integer $M<\infty$ such that for any two distinct points $z_1,z_2\in E$, there exists a Jordan arc $\delta\subset D$ connecting $z_1$ with $z_2$ such that $\#(\delta\cap\G)\le M$.
\end{lem}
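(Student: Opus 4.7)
The plan is to cover $E$ by a finite collection of small open disks that intersect $\G$ in an especially simple way, and then concatenate local arcs using the connectedness of $E$.

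For each $z\in E$, I would choose an open disk $B_z$ with $\overline{B_z}\subset D$ (possible since the continuum $E$ is compactly contained in the open set $D$) such that either $B_z\cap\G=\emptyset$, or $z$ lies on a unique curve $\g_z\in\G$ (as the curves of $\G$ are pairwise disjoint), $\overline{B_z}$ is disjoint from $\G\setminus\{\g_z\}$, and $B_z\cap\g_z$ is a single open arc chording $B_z$ with endpoints on $\pa B_z$. The last condition can be achieved for $B_z$ sufficiently small by a compactness argument: parametrizing $\g_z$ as a homeomorphism from $S^1$ onto its image, the preimage $\g_z^{-1}(B_z)$ shrinks to the unique point $\g_z^{-1}(z)$ as $B_z$ shrinks, and hence is a single arc in $S^1$ for all sufficiently small $B_z$. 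By compactness of $E$ we then extract a finite subcover $B_1,\ldots,B_N$.

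The key local observation is: any two points $a,b\in B_i$ can be joined by a Jordan arc in $\overline{B_i}$ that meets $\G$ in at most two points (counting endpoints of the arc that happen to lie on $\G$). If $B_i\cap\G=\emptyset$ this is trivial; otherwise the chord $B_i\cap\g_i$ divides $B_i$ into two open half-disks, and by routing the arc through a chosen half-disk and crossing the chord at exactly one interior point one achieves the bound. For the global step, consider the graph on $\{1,\ldots,N\}$ with an edge $i\sim j$ whenever $B_i\cap B_j\cap E\neq\emptyset$. This graph is connected, for otherwise a partition of its vertex set would induce a disconnection of $E$ via the finite relatively open cover $\{B_i\cap E\}$.

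Given distinct $z_1,z_2\in E$, pick indices with $z_1\in B_{i_0}$ and $z_2\in B_{i_r}$, a graph path $i_0,i_1,\ldots,i_r$ of length $r\leq N-1$, and transition points $p_k\in B_{i_k}\cap B_{i_{k+1}}\cap E$ (setting $p_{-1}=z_1$ and $p_r=z_2$). Applying the local observation inside each $\overline{B_{i_k}}$ produces Jordan arcs $\delta_k$ from $p_{k-1}$ to $p_k$, and their concatenation is a curve in $D$ joining $z_1$ to $z_2$ that meets $\G$ in at most $2N$ points. Extracting a Jordan sub-arc by cutting out loops at self-intersections (a standard topological operation that can only reduce the crossing count with $\G$) yields the desired $\delta$, and we may take $M=2N$. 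The principal technical obstacle is verifying the single-chord property in the first step, since a Jordan curve can in principle return close to any given point in complicated ways; the compactness argument outlined above is exactly what rules this out, after which the cover-and-concatenate construction is routine.
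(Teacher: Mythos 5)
The step that fails is the local single-chord claim. You assert that for a point $z\in E$ lying on a curve $\g_z\in\G$, a sufficiently small disk $B_z$ meets $\g_z$ in a single arc, arguing that $\g_z^{-1}(B_z)$ shrinks to $\g_z^{-1}(z)$ and ``hence is a single arc in $S^1$.'' The last inference does not follow: knowing that $\g_z^{-1}(B_z)$ is eventually contained in a short arc of $S^1$ does not make it connected. In fact a Jordan curve can oscillate toward a point on it in such a way that \emph{every} small disk around that point meets the curve in infinitely many arcs. Concretely, set $\rho(t)=t\bigl(2+\sin(1/t)\bigr)$ for $t\in(0,\delta]$ and $\rho(0)=0$, and let $\g(t)=z+\rho(t)e^{it}$. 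This is a Jordan arc through $z$ (close it up away from $z$ to obtain a Jordan curve). Since $t\le\rho(t)\le 3t$, for every small $r>0$ one has $\rho(t)<r$ for $t<r/3$ and $\rho(t)\ge r$ for $t\ge r$, while on $(r/3,r)$ the factor $\sin(1/t)$ completes roughly $1/(\pi r)$ full oscillations; one can therefore find $t_1<t_2$ in $(r/3,r)$ with $\rho(t_1)=3t_1\ge r$ and $\rho(t_2)=t_2<r$, so $\g^{-1}\bigl(B(z,r)\bigr)$ is disconnected for \emph{all} small $r$. Local connectivity of the Jordan curve (which you are implicitly invoking) gives you a connected relative neighborhood of $z$ \emph{inside} $B_z\cap\g_z$, not that $B_z\cap\g_z$ itself is connected. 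Since the number of chord components of $B_z\cap\g_z$ can grow without bound as $B_z$ shrinks, the ``cover by finitely many nice disks'' step has nothing to stand on, and the concatenation bound $M=2N$ evaporates.

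The paper's proof avoids localizing at individual points. It looks instead at the components of $\G\cap D$ (each is either a whole curve contained in $D$ or a crosscut of $D$ with endpoints on $\partial D$) and shows that only finitely many of them meet $E$: under a parametrization $\tau:S^1\times\{1,\dots,n\}\to\G$ these components pull back to pairwise disjoint open sets, and a pairwise-disjoint open cover of the compact set $\tau^{-1}(\G\cap E)$ must be finite. Any component of $\G\cap D$ that, together with $\partial D$, separates $z_1$ from $z_2$ must meet the connected set $E$, so at most $M:=\#\mathscr{L}_0$ components separate $z_1$ from $z_2$, and one builds $\delta$ crossing each of these once while avoiding all other components. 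This argument is insensitive to fine oscillations of $\G$ near points of $E$, which is exactly what defeats the disk-cover strategy. If you want to save a cover-and-concatenate scheme, you would need to replace ``$B_z\cap\g_z$ is a single chord'' by a statement about which chords actually separate points of $E$ inside $B_z$ --- which amounts to rediscovering the paper's finiteness argument.
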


\begin{proof}
Denote
\begin{eqnarray*}
\mathscr{L} &=& \{\sigma: \sigma\text{ is a component of }\G\cap D\}\text{ and} \\
\mathscr{L}_0 &=& \{\sigma\in\mathscr{L}:\,\sigma\cap E\neq\emptyset\}.
\end{eqnarray*}
Let $\tau: S^1\times\{1,2,\cdots,n\}\to\G$ be a homeomorphism. Then $\tau^{-1}(\G\cap E)$ is a compact subset, which is covered by the open sets $\{\tau^{-1}(\sigma)\}$ ($\sigma\in\mathscr{L}$) in $S^1\times\{1,2,\cdots,n\}$. Thus there are finitely many such open sets cover $\tau^{-1}(\G\cap E)$. So $M:=\#\mathscr{L}_0<\infty$.

For any two distinct points $z_1,z_2\in E$, denote by $\mathscr{L}(z_1,z_2)$ the collection of arcs $\sigma\in\mathscr{L}$ such that either $\sigma\cap\{z_1,z_2\}\neq\emptyset$ or $\sigma\cup\partial D$ separates $z_1$ from $z_2$. Then for each $\sigma\in\mathscr{L}(z_1,z_2)$, $\sigma\cap E\neq\emptyset$ since $E$ is connected. Thus $\mathscr{L}(z_1,z_2)\subset\mathscr{L}_0$ and hence $\#\mathscr{L}(z_1,z_2)\le\#\mathscr{L}_0=M<\infty$.

By definition of $\mathscr{L}(z_1,z_2)$, there exists a Jordan arc $\delta\subset D$ connecting $z_1$ with $z_2$ such that $\delta$ intersects each $\sigma\in\mathscr{L}(z_1,z_2)$ at a single point and is disjoint from any arcs in $\mathscr{L}\sm\mathscr{L}(z_1,z_2)$. So $\#(\delta\cap\G)\le\#\mathscr{L}(z_1,z_2)\le M$.
\end{proof}

We say that a continuum $E$ crosses an annulus $A$ if $E$ intersects both of the two boundary components of $A$. Note that for any $z\in\cbar$, $h^{-1}(z)$ is a full continuum by Theorem \ref{thm:semiconjugacy}(4). 

\begin{lem}\label{lem:empty}
The set $$T=\{z\in\cbar: h^{-1}(z)\text{ crosses a component of }\AAA_0\}$$ is an empty set.
\end{lem}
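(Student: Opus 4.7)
I would argue by contradiction: assume $T \neq \emptyset$ and pick $z_0 \in T$, so that $E_0 := h^{-1}(z_0)$ is a full continuum crossing some component $A(\beta_0)$ of $\AAA_0$ with $\beta_0 \in \G_{\g}$ coiling in $\G_{\PPP}$. Using Lemma \ref{lem:aperiodic-coiling} together with the preimage structure in Theorem \ref{thm:semiconjugacy}(5) (fibers of $h$ over $f^m$-preimages of a point are components of $F^{-1}$-preimages of the original fiber), I would first reduce to the case where $\beta_0$ is periodic coiling under $F$ with some period $p \geq 1$. Lemma \ref{lem:unique} (contrapositive) then gives $\kappa_p(\beta_0,\{\beta_0\})\geq 2$, and by iteration $k_n := \kappa_{np}(\beta_0,\{\beta_0\}) \geq 2^n \to \infty$.

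Label the corresponding $F^{-np}$-preimage curves as $\beta_1^n,\ldots,\beta_{k_n}^n$. The standard-form property, $\pa\AAA_{\PPP}\subset\pa F^{-1}(\AAA_{\PPP})$ together with essential components of $F^{-1}(\AAA_{\PPP})$ lying in $\AAA_{\PPP}$, forces every $\beta_i^n$ into $A(\beta_0)$ as a Jordan curve parallel to the core, separating the two boundary components $\g_1,\g_2$ of $A(\beta_0)$. Since $E_0$ is connected and meets both $\g_1$ and $\g_2$, every $\beta_i^n$ must meet $E_0$.

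For the contradiction I would combine Lemma \ref{lem:top} with this unbounded proliferation. Fix a Jordan domain $D$ containing $E_0$, pick $z_1\in E_0\cap\g_1$ and $z_2\in E_0\cap\g_2$, and use Lemma \ref{lem:top} with $\G=\g_1\cup\g_2$ to get a uniform constant $M_0<\infty$ (independent of $n$) and a Jordan arc $\delta_0\subset D$ from $z_1$ to $z_2$ with $\#(\delta_0\cap\G)\leq M_0$. I would then lift $\delta_0$ under $F^{np}$: each of the $k_n$ nested sub-annuli $A_i^{np}\subset A(\beta_0)$ bounded by pairs of preimage curves supports a lift of (a suitable arc portion of) $\delta_0$ connecting preimages of $z_1$ and $z_2$, and by Theorem \ref{thm:semiconjugacy}(5) each produces a distinct preimage $w_i\in F^{-np}(z_0)$ whose fiber $h^{-1}(w_i)$ again crosses $A(\beta_0)$. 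Thus $T$ would contain at least $k_n$ such points for every $n$, yielding unboundedly many pairwise disjoint full continua all crossing the single annulus $A(\beta_0)$. A final application of Lemma \ref{lem:top} to a larger fixed Jordan domain enclosing all these fibers, together with their disjointness, would then contradict the uniform bound $M_0$.

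The hardest part will be making the pullback step precise: one must guarantee that the lifted arcs genuinely lie in distinct $h$-fibers (so that the $k_n$ preimages $w_i$ are truly distinct) and that each lifted fiber actually crosses $A(\beta_0)$ rather than merely touching its boundary. The coiling hypothesis is indispensable here, since without it $k_n$ would remain bounded by Lemma \ref{lem:unique} and the proliferation of crossing fibers needed for the contradiction would collapse.
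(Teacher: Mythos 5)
Your proposal captures the right ingredients at a high level (the coiling hypothesis produces unboundedly many preimage curves in $A(\beta_0)$, and Lemma \ref{lem:top} is the tool that converts that into a contradiction), but the mechanism you describe does not actually close, and it misses the paper's central device. The paper's contradiction is \emph{not} obtained by proliferating fibers forward; it is obtained by pulling back a \emph{single arc} through a homeomorphism. The steps you are missing are: (i) one first shows $T$ is closed and $f(T)\subset T$, forms $T_\infty=\bigcap_{n\geq1}f^n(T)$, and then selects a \emph{backward orbit} $\{z_n\}$ inside $T_\infty$ that eventually avoids $\CCC_f$; (ii) since the $z_n$ are noncritical, there is a Jordan domain $D_0\supset L_0:=h^{-1}(z_0)$ and, for every $n$, a component $D_n\supset L_n:=h^{-1}(z_n)$ of $F^{-n}(D_0)$ with $F^n:D_n\to D_0$ a \emph{homeomorphism}; (iii) Lemma \ref{lem:top} applied to $L_0\subset D_0$ and the fixed finite curve system $\G_{\PPP}$ gives a bound $M$, and since $L_n$ crosses a component of $\AAA_0$ that by coiling contains at least $M+1$ nested preimage annuli of $F^{-n}(\AAA_{\PPP})$, two points $\zeta_n,\zeta'_n\in L_n$ are separated by $\geq M+1$ curves of $F^{-n}(\G_{\PPP})$; yet the lift $\delta_n$ of the bounded arc $\delta$ through the homeomorphism $F^n:D_n\to D_0$ has at most $M$ intersections with $F^{-n}(\G_{\PPP})$, a contradiction.

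Concrete problems with your route: (a) you apply Lemma \ref{lem:top} with $\G=\g_1\cup\g_2$, which only bounds intersections with the two boundary curves of $A(\beta_0)$ and says nothing about the many interior preimage curves $\beta_i^n$, so there is no tension to exploit; (b) you assert that lifting $\delta_0$ under $F^{np}$ produces preimages $w_i$ with $h^{-1}(w_i)$ again crossing $A(\beta_0)$, i.e.\ $w_i\in T$, but $T$ is only forward-invariant, not backward-invariant, so this is unjustified (this is precisely why the paper passes to $T_\infty$ first); (c) even granting unboundedly many disjoint full continua crossing one fixed annulus, that is not a contradiction by itself (an annulus admits infinitely many disjoint crossing arcs); (d) the reduction to $\beta_0$ periodic is not needed and not what the paper does — it works with the completely stable multicurve $\G_{\g}$ all of whose curves are coiling, using only that $\kappa_n(\beta,\G_{\PPP})\to\infty$. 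The essential idea you are missing is step (ii): the noncritical backward orbit, which lets you compare intersection numbers through a homeomorphic lift of a single arc.
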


\begin{proof}
By Theorem \ref{thm:semiconjugacy}(4), $T\subset\JJJ_f$. Moreover, $T$ is a closed set. In fact, let $\{z_n\}_{n\ge 1}$ be a sequence of points in $T$ and converges to a point $z_{\infty}\in\cbar$. Passing to a subsequence, we may assume that for all $n\ge 1$, there exist two points $\zeta_n,\zeta'_n\in h^{-1}(z_n)$, such that they are contained in the two distinct boundary components of the same annulus $A\in\AAA_0$. By taking subsequence furthermore, we may further assume that $\{\zeta_n\}_{n\ge 1}$ and $\{\zeta'_n\}_{n\ge 1}$ converge to the points $\zeta_{\infty}$ and $\zeta'_{\infty}$ respectively. The continuity of $h$ implies that $h(\zeta_{\infty})=z_{\infty}=h(\zeta'_{\infty})$. Thus $z_{\infty}\in T$ and \,$T$ is a closed set. 

For each point $z\in T$, $h^{-1}(z)$ crosses some component $A$ of $\AAA_0$. Since $\G_{\g}$ is a completely stable multicurve, $F^{-1}(\AAA_0)$ has a component $A^1$ contained in $A$ essentially. By Theorem \ref{thm:semiconjugacy}(5), $F(h^{-1}(z))=h^{-1}(f(z))$. So $h^{-1}(f(z))$ crosses $F(A^1)$ which is a component of $\AAA_0$. So $f(z)\in T$. In other words, $f(T)\subset T$. 

Assume that $T$ is non-empty. Set $T_{\infty}=\bigcap_{n\geq 1}f^{n}(T)$. Then $T_{\infty}$ is a non-empty closed set and $f(T_{\infty})=T_{\infty}$. Hence for any point $z_0 \in T_{\infty}$, there is a sequence of points $\{z_n\}_{n\ge 0}$ contained in $T_{\infty}$ such that $f(z_{n+1})=z_n$ for any $n\geq 0$ (i.e. $T_{\infty}$ contains a backward orbit). Either $z_n$ is periodic for all $n\geq 0$ or there is an integer $n_0$ such that $z_{n}$ is not periodic for any $n \geq n_0$. In the former case, all the points $z_n$ are not critical points of $f$ since $z_n\in\JJJ_f$. In the latter case, there is an integer $n_1\geq 0$ such that $z_n$ are non-critical points of $f$ for all $n\geq n_1$ since $f$ has only finitely many critical points. So in both cases, we obtained a sequence of points $\{z_n\}_{n\geq 0}$ in $T_{\infty}\sm \CCC_f$ such that $f(z_{n+1})=z_n$, where $\CCC_f$ is the set of critical points of $f$. 

Set $L_n = h^{-1}(z_n)$. Then by Theorem \ref{thm:semiconjugacy}(5), $L_n$ is a component of $F^{-1}(L_{n-1})$ and there is a Jordan domain $D_0$ containing $L_0$ such that $F^{n}: D_n\to D_0$ is a homeomorphism for any $n\ge 1$, where $D_n$ is the component of $F^{-n}(D_0)$ containing $L_n$. By Lemma \ref{lem:top}, there exists an integer $M\ge 0$ such that for any two distinct points $\zeta_0,\zeta'_0\in L_0$, there is a Jordan arc $\delta\subset D_0$ connecting $\zeta_0$ with $\zeta'_0$ and $\#(\delta\cap\G_{\PPP})\le M$.

Note that for any curve $\beta\in\G_{\g}$, $\kappa_n(\beta,\G_{\PPP})\to\infty$. So there exists an integer $n>0$ such that for any component $A$ of $\AAA_0$, there are at least $M+1$ components of $F^{-n}(\AAA_{\PPP})$ contained essentially in $A$. Since $L_n$ crosses a component of $\AAA_0$, there exist two distinct points $\zeta_n,\zeta'_n\in L_n$ such that $F^{-n}(\G_{\PPP})$ has at least $M+1$ components separating $\zeta_n$ from $\zeta'_n$.

Now the two points $F^{n}(\zeta_n)$ and $F^{n}(\zeta'_n)$ are contained in $L_0$. We have known that there is a Jordan arc $\delta\subset D_0$ such that $\delta$ connects $F^{n}(\zeta_n)$ with $F^{n}(\zeta'_n)$ and $\#(\delta\cap\G_{\PPP})\le M$. Let $\delta_n$ be the component of $F^{-n}(\delta)$ contained in $D_n$. Then $\delta_n$ connects $\zeta_n$ with $\zeta'_n$. Since $F^{n}:\delta_n\to\delta$ is a homeomorphism, we have $\#(\delta_n\cap F^{-n}(\G_{\PPP}))\le M$. This contradicts the fact that $F^{-n}(\G_{\PPP})$ has at least $M+1$ essential components separating $\zeta_n$ from $\zeta'_n$. Hence $T$ is empty.
\end{proof}

\noindent {\bf Remark:} The statement of Lemma \ref{lem:empty} is the same as Claim 1 in \cite[\S 4]{cui2016renormalizations}. However, the proof here is slightly different from there. In \cite{cui2016renormalizations}, $\AAA_0=\AAA_{\PPP}$ since each curve in a Cantor multicurve is a coiling curve. So to prove the set $T$ is empty, they can use $\AAA_{\PPP}$ to get the contradiction. In this paper, $\G_{\PPP}$ may not contain Cantor multicurves and in this case we are not able to find a submulticurve $\G'$ of $\G$ such that each curve in $\G'$ is coiling in $\G'$. So we need two different multicurves $\G_{\g}$ and $\G_{\PPP}$ to deduce the contradiction.  The two multicurves correspond two collections of annuli $\AAA_0$ and $\AAA_{\PPP}$ respectively. The set $T$ contains the points whose fiber crosses annuli in $\AAA_0$ and the annulus in $\AAA_{\PPP}$ gives the increasing number of annuli which are contained in $\AAA_0$. 
\vskip 0.24cm

Now we are ready to prove Lemma \ref{lem:key}. 
\begin{proof}[Proof of Lemma \ref{lem:key}]
Suppose $E_1$ and $E_2$ are two different components of $\cbar\sm F^{-n}(\AAA_0)$ and $h(E_1)\cap h(E_2)\neq\emptyset$. Take a point $z\in h(E_1)\cap h(E_2)$. Since $h^{-1}(z)$ is a continuum and both $E_1$ and $E_2$ have intersection with $h^{-1}(z)$, $h^{-1}(z)$ crosses a component $A$ of $F^{-n}(\AAA_0)$. By Theorem \ref{thm:semiconjugacy}(5), $h^{-1}(f^n(z))=F^n(h^{-1}(z))$ crosses $F^n(A)$ which is a component of $\AAA_0$. This contradicts Lemma \ref{lem:empty}.
\end{proof}

\subsubsection{Necessity}
\begin{proof}[Proof of the necessity of Theorem \ref{thm:disjoint}]
We need to prove that if $\JJJ_1\cap\JJJ_2=\emptyset$, then $\g$ is a coiling curve. Recall that $\AAA_{\PPP}$ is the multi-annulus isotopic to $\G_{\PPP}$ rel $\PPP$ and $A(\g)$ is the annulus in $\AAA_{\PPP}$ isotopic to $\g$ rel $\PPP$. As we have constructed in \S \ref{sec:sufficiency}, $W_1$ and $W_2$ are the two components of $\cbar\sm\AAA_{\PPP}$ attached at $A(\g)$ and $F$ is the marked branched covering. Then $F$ is Thurston equivalent to $f^m$ rel $\PPP$ and hence has no Thurston obstructions, where $m\geq 1$ is the integer such that each periodic component of $\cbar\sm\G_{\PPP}$ is fixed under $f^m_{\G_{\PPP}}$. Moreover essential components of $F^{-1}(\AAA_{\PPP})$ is contained in $\AAA_{\PPP}$ and $\pa\AAA_{\PPP}\subset\pa F^{-1}(\AAA_{\PPP})$. 

Assume that $\g$ is not coiling in $\G$. Then $\kappa_n(\g,\G_{\PPP})=1$ for all $n\geq 1$ by the construction of $\G_{\PPP}$. Thus there is only one essential component of $F^{-1}(\AAA_{\PPP})$ isotopic to $\g$ rel $\PPP_f$. Since $A(\g)\cap\PPP_f=\emptyset$ and $F(\pa\AAA_{\PPP})\subset\pa\AAA_{\PPP}$, so $A(\g)$ has no intersection with nonessential components of $F^{-1}(\AAA_{\PPP})$. 

Suppose $\g$ is periodic. Then its period is one by the choice of $m$. Then by the above argument $F(A(\g))=A(\g)$. Moreover the degree $\deg F|_{A(\g)}$ is at least $2$. For otherwise, the cycle of $\{\g\}$ is a Levy cycle of $F$, which contradicts that $F$ has no Thurston obstructions. 


Recall that $h$ is the semiconjugacy from $F$ to $f^m$ and $h^{-1}(\JJJ_i)$ is compactly contained in $W_i$, where $i=1,2$. Denote by $A$ the annulus bounded by $h^{-1}(\JJJ_1)$ and $h^{-1}(\JJJ_2)$. By the construction of $F$, we know that $F(A)=A$ and $\deg F|_{A}=\deg F|_{A(\g)}\geq 2$. 

If $\JJJ_1\cap\JJJ_2=\emptyset$, then $h(A)$ is an annulus bounded by $\JJJ_1$ and $\JJJ_2$ and has positive modulus. Moreover, $f^m(h(A))=h(A)$ and $\deg f^m|_{h(A)}=\deg F|_{A}\geq 2$. These is a contradiction since $h(A)$ has positive modulus. Hence $\JJJ_{1}\cap\JJJ_{2}\neq\emptyset$. 

Now suppose that $\g$ is not periodic. Since $\kappa_n(\g,\G_{\PPP})=1$ for all $n\geq 1$, there is a unique periodic curve $\alpha$, which is not coiling, such that $A(\g)$ is a component of $F^{-n}(A(\alpha))$ for some $n\geq 1$. Assume that $h(A(\g))$ is an annulus with positive modulus. Then $h(A(\alpha))=f^{nm}(h(A(\g)))$ is an annulus with positive modulus. This contradicts the conclusion in the periodic case. So in the case that $\g$ is not periodic, $\JJJ_{1}\cap\JJJ_{2}\neq\emptyset$ is also true. This ends the proof of the necessity of Theorem \ref{thm:disjoint}. 
\end{proof}


\section{Renormalization}
\subsection{Rational-like maps}\label{sec:rational-like}
At first, we recall some concepts and results about the renormalizations. A proper map $h:U\ra V$ of degree $d\geq 2$ is a polynomial-like map if $\,U,V\subset\mathbb{C}$ are two simply connected domains and $U$ is compactly contained in $V$. The filled Julia set of $h$ is denoted by 
$$
K:=\bigcap\limits_{n=1}^{\infty}h^{-n}(V). 
$$
By the straightening theorem (see \cite[Theorem 1]{douady1985dynamics} or \cite[Theorem 5.7]{mcmullen1994complex}), there is a polynomial $g$ of degree $d$ and a quasiconformal map $\phi$ of $\mathbb{C}$ such that 
\begin{itemize}
\item[(1)] $\JJJ_g=\pa\phi(K)$, 
\item[(2)] $g\circ\phi=\phi\circ h$ in a neighborhood of $K$, and
\item[(3)] the complex dilatation $\mu_{\phi}$ of $\phi$ satisfies $\mu_{\phi}(z)=0$ for a.e. $z\in K$. 
\end{itemize}
When $K$ is connected, the polynomial $g$ is unique up to affine conjugation. If $h=f^p|_U$ for some rational map $f$ and $\deg h<\deg f^p$, $g$ is called a renormalization of $f$. The boundary $\pa K$ is called a small Julia set. 

For rational maps with disconnected Julia sets, McMullen defined stable conjugate relation on their periodic Julia components \cite{mcmullen1988automorphisms}. He proved that on a periodic Julia component of a rational map $f$ with period $p\geq 1$, $f^p$ is stably conjugate to a rational map $g$ whose Julia set is connected. The map $g$ can be viewed as a ``renormalization" of $f$. In this paper, we adopt a stronger version which is defined by rational-like maps \cite{buff2003virtually, cui2016renormalizations}. 

A  \textbf{rational-like} map is a holomorphic proper map $h:U\ra V$ of degree $d\geq 2$, satisfying that 
\begin{itemize}
\item[(1)] $U,V$ are two finitely connected domains of $\cbar$ and $U$ is compactly contained in $V$, 
\item[(2)] the orbit of every critical point of $h$ stays in $U$, and
\item[(3)] each component of $\cbar\sm U$ contains at most one component of $\cbar\sm V$. 
\end{itemize}
The {\bf filled Julia set} of $h$ is defined as $$K_h:=\bigcap\limits_{n\geq 1}h^{-n}(V).$$
By \cite[Proposition 5.1]{cui2016renormalizations}, $K_h$ is connected. 
The following straightening theorem (see \cite[Theorem 4]{buff2003virtually} or \cite[Theorem 5.2]{cui2016renormalizations}) holds for rational-like maps. 

\begin{thm}[\bf Straightening]\label{thm:straightening}
Let $h:U\ra V$ be a rational-like map and $K_h$ be its filled Julia set. Then there is a rational map $g$ with $\deg g=\deg h$ and a quasiconformal map $\phi$ of $\cbar$ such that:
\begin{itemize}
\item[(1)] $\JJJ_g=\pa\phi(K_h)$, 
\item[(2)] $g\circ\phi=\phi\circ h$ in a neighborhood of $K_h$, and 
\item[(3)] the complex dilatation $\mu_{\phi}$ of $\phi$ satisfies $\mu_{\phi}(z)=0$ for a.e. $z\in K_h$. 
\end{itemize}
Moreover, $g$ is unique up to holomorphic conjugation. 
\end{thm}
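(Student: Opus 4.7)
The plan is to adapt the classical Douady-Hubbard straightening argument for polynomial-like maps to the multiply connected rational-like setting, proceeding in four stages: a quasiregular extension of $h$ to a global branched covering of $\cbar$, construction of an invariant Beltrami form, application of the measurable Riemann mapping theorem, and uniqueness via quasiconformal deformation.

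First I would extend $h:U\to V$ to a quasiregular degree-$d$ branched covering $\wt h:\cbar\to\cbar$ coinciding with $h$ on $U$. Assuming $\pa U$ and $\pa V$ are each a finite disjoint union of Jordan curves, for every complementary component $E$ of $U$ the boundary restriction $h|_{\pa E}$ is a degree-$k_E$ covering onto the boundary of some complementary component $D'$ of $V$. Using condition (3) of the definition of rational-like map --- which pairs each complementary component of $V$ with the unique complementary component of $U$ containing it --- one extends $\wt h$ on each $E$ to a branched covering $E\to\ol{D'}$ of degree $k_E$ matching $h$ on $\pa E$, by fixing a holomorphic model $\zeta\mapsto\zeta^{k_E}$ on an inner disk (after Riemann uniformization) and interpolating quasiconformally in a surrounding collar. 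A boundary-degree count over each component of $\pa V$ then yields $\deg\wt h=d$.

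Next I would spread the Beltrami coefficient $\mu_0$ of $\wt h$ along backward orbits to obtain a $\wt h$-invariant Beltrami form. Set $\mu=0$ on $K_h$; for $z\notin K_h$, let $n(z)\ge 0$ be minimal with $\wt h^{n(z)}(z)\notin U$ and put $\mu(z):=(\wt h^{n(z)})^*\mu_0(\wt h^{n(z)}(z))$. Since $\wt h=h$ is holomorphic on $U$, each pullback step preserves the pointwise dilatation, so $\|\mu\|_\infty\le\|\mu_0\|_\infty<1$ and $\wt h^*\mu=\mu$ almost everywhere by construction. The Ahlfors-Bers theorem then provides a quasiconformal $\phi$ of $\cbar$ with $\mu_\phi=\mu$, and $g:=\phi\circ\wt h\circ\phi^{-1}$ has vanishing Beltrami coefficient; being a degree-$d$ holomorphic branched self-cover of $\cbar$, it is a rational map of degree $d$. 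Properties (2) and (3) are immediate from $\wt h=h$ near $K_h$ and $\mu|_{K_h}=0$. For (1), the set $\phi(K_h)$ is totally $g$-invariant, $g$ is holomorphically conjugate to $h$ on a neighborhood of $\phi(K_h)$, and every point of $\cbar\sm\phi(K_h)$ eventually enters under $g$-iteration the superattracting basins supplied by the extension; standard normal-family arguments then give $\JJJ_g=\pa\phi(K_h)$.

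For uniqueness, given two triples $(\phi_1,g_1)$ and $(\phi_2,g_2)$ satisfying (1)-(3), set $\psi:=\phi_1\circ\phi_2^{-1}$, so $\psi\circ g_2=g_1\circ\psi$ near $\JJJ_{g_2}$. Since both $\phi_i$ are conformal on $K_h$, the map $\psi$ is conformal on $K_{g_2}:=\phi_2(K_h)$, so $\mu_\psi$ is supported on $\cbar\sm K_{g_2}$ and is $g_2$-invariant. Because $K_h$ is connected (cited Proposition~5.1), $\cbar\sm K_{g_2}$ lies entirely in the Fatou set of $g_2$ and consists of the superattracting basins produced by the extension; these basins admit no invariant line fields (in Böttcher coordinates the dynamics is $\zeta\mapsto\zeta^k$), forcing $\mu_\psi=0$. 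Hence $\psi$ is Möbius and $g_1,g_2$ are holomorphically conjugate. The main obstacle will be Stage 1: constructing the quasiregular extension with exactly the right global degree requires careful combinatorial bookkeeping of how complementary components of $U$ are matched with those of $V$, and condition (3) is used essentially at this step; once the extension is in place, the remaining stages follow the template of the polynomial-like straightening theorem with only cosmetic modifications.
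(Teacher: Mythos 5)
The paper does not prove Theorem \ref{thm:straightening} itself; it cites \cite[Theorem 4]{buff2003virtually} and \cite[Theorem 5.2]{cui2016renormalizations}, so there is no in-text proof to compare against. Your four-stage outline (quasiregular extension, invariant Beltrami form, measurable Riemann mapping theorem, quasiconformal rigidity) is the correct template, and the extension and degree count are essentially right, but two of your stages contain genuine gaps.

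In Stage 2 you set $\mu=\mu_0$, the Beltrami coefficient of $\wt h$, on all of $\cbar\setminus U$ and assert $\wt h^*\mu=\mu$ ``by construction.'' Checking the composition formula for Beltrami coefficients at a point $z\in\cbar\setminus V$, the invariance $\mu_{\phi\circ\wt h}(z)=\mu(z)$ with $\mu(z)=\mu_0(z)$ forces $\mu\bigl(\wt h(z)\bigr)=0$, i.e.\ $\wt h$ must already be holomorphic at $\wt h(z)$. Since $\wt h(\cbar\setminus U)=\cbar\setminus V$, your formula only yields an invariant form if the extension is holomorphic on \emph{all} of $\cbar\setminus V$ — but your construction (``holomorphic model on an inner disk, interpolating quasiconformally in a surrounding collar'') does not guarantee that the quasiconformal collar misses $\cbar\setminus V$. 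This is fixable — shrink the collar so that each complementary component of $V$ lies in the inner holomorphic disk of the enclosing complementary component of $U$ — but it is an additional requirement that must be imposed; as written, the invariance claim fails.

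The uniqueness argument in Stage 4 has a more serious error. You argue that $\mu_\psi$ is a $g_2$-invariant Beltrami form supported on the superattracting basins and that ``these basins admit no invariant line fields,'' hence $\mu_\psi=0$. First, the absence of invariant \emph{line fields} (Beltrami differentials with $|\mu|=1$ a.e.) is not the relevant fact: a superattracting basin carries a large family of $g$-invariant Beltrami coefficients with $\|\mu\|_\infty<1$, obtained by pulling back arbitrary Beltrami data from the quotient annulus via the Böttcher coordinate, so invariance alone does not force $\mu_\psi=0$. Second, the cycles introduced by the extension need not be superattracting at all: if every boundary degree $k_{E_i}$ along a periodic cycle of complementary components of $V$ equals $1$, the first-return map is a degree-one contraction and the cycle is merely attracting, where invariant Beltrami forms are again plentiful. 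Third, you only have $\psi\circ g_2=g_1\circ\psi$ near $K_{g_2}$, so $g_2$-invariance of $\mu_\psi$ off $K_{g_2}$ is not yet established. The correct route is to first lift $\psi$ through the dynamics to a global qc conjugacy, then normalize it to respect Böttcher (or Koenigs) coordinates on a fundamental domain near each attracting cycle so that it is conformal there, and finally pull back along the grand orbits to conclude $\mu_\psi=0$ outside a null set.
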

We say $h$ is {\bf hybrid equivalent} to $g$. Suppose that $f$ is a rational map and there are domains $U, V$ and an integer $p\geq 1$ such that $f^p:U\ra V$ is a rational-like map. Then by Theorem \ref{thm:straightening}, there is a rational map $g$ such that $f^p:U\to V$ is hybrid equivalent to $g$. The unique rational map $g$ is called a \textbf{renormalization} of $f$ if $\deg g < \deg f^p$. The set $\JJJ_g$ is called a {\bf small Julia set}. 
\vskip 0.24cm

{\bf Remark:} The definition of rational-like maps in \cite{buff2003virtually} only requires the condition (1). The straightening theorem is still true in this case except that the rational map $g$ is not unique. In this paper, we use rational-like maps to define renormalizations of rational maps, which usually have connected filled Julia sets. So it is convenient to require the other two conditions. 

\subsection{Proof of Theorem \ref{thm:renorm}}
Before we prove Theorem \ref{thm:renorm}, we first give a sufficient condition for the existence of renormalizations of rational maps. 

\begin{lem}\label{lem:renorm}
Suppose that $f$ is a PCF rational map with nonempty Fatou set and $E$ is a non-degenerate continuum with $\pa E\subset \JJJ_f$ such that $f^p(E)=E$ for some integer $p\geq 1$. If each component of $f^{-p}(E)$ is either $E$ or disjoint from $E$, then there is a neighborhood $\,V$ of $E$ such that 
\begin{itemize}
\item[(1)] $f^p: V^p\to V$ is a rational-like map, where $V^p$ is the component of $f^{-p}(V)$ containing $E$;
\item[(2)] $E=\bigcap\limits_{n\geq1}f^{-np}(V)$. 
\end{itemize}
\end{lem}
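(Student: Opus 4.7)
My plan is to construct $V$ as a small equivariant tubular neighborhood of $E$, using the expansion of $f^p$ on $\JJJ_f$ provided by the orbifold metric. The hypothesis $\FFF_f\neq\emptyset$ excludes Latt\`es examples, so the orbifold of $f$ is hyperbolic and $f^p$ is uniformly expanding on $\JJJ_f$ in the orbifold metric $\rho$ with some factor $\lambda>1$. The key combinatorial input is the hypothesis that $E$ is an isolated component of $f^{-p}(E)$: it implies that the closed set $f^{-p}(E)\sm E$ is a positive $\rho$-distance from $E$, which will keep pullback components of a small tube around $E$ trapped near $E$.

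Concretely, I first fix a small $\epsilon>0$ and let $V$ be a finitely connected open neighborhood of $E$ whose outer boundary lies in the $\epsilon$-tube $T_\epsilon:=\{z:\rho(z,\pa E)<\epsilon\}$. Finite connectivity is arranged by capping off all but finitely many small components of $\cbar\sm E$ with closed topological disks drawn inside $\cbar\sm f^{-p}(E)$; this uses that only finitely many complementary components of $E$ have spherical diameter exceeding any prescribed positive number. Let $V^p$ be the component of $f^{-p}(V)$ containing $E$. The isolation of $E$ ensures $V^p$ is disjoint from $f^{-p}(E)\sm E$, so every component of $\pa V^p$ sits in $f^{-p}(\pa V)$ and $f^p$-maps into $T_\epsilon$; orbifold expansion then forces $\pa V^p\subset T_{\epsilon/\lambda}$, giving $V^p$ compactly contained in $V$, which is part of (1). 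Iterating the same estimate, the component of $f^{-np}(V)$ containing $E$ lies in $T_{\epsilon/\lambda^n}$, and combined with the disjointness of $V$ from $f^{-p}(E)\sm E$ this forces $\bigcap_{n\ge 1}f^{-np}(V)=E$, yielding (2). Properness of $f^p\colon V^p\to V$ is automatic, and the critical-orbit condition follows from $\PPP_f\cap V\subset E$ (for $V$ small) together with $f^p(E)=E$.

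The step I expect to require the most care is the rational-like requirement that each component of $\cbar\sm V^p$ contain at most one component of $\cbar\sm V$, i.e., that pulling back does not split any complementary continuum of $V$. I plan to enforce this by choosing each capping disk of $V$ to lie deep inside a single component of $\cbar\sm f^{-p}(E)$; then for $\epsilon$ small, each complementary continuum of $V$ is enclosed by a unique complementary continuum of $V^p$, with the correspondence governed by the local degree of $f^p$ along the matching boundary curve of $E$. Two secondary technical points are the degeneration of $\rho$ at points of $\pa E\cap\PPP_f$, which I handle by replacing the orbifold estimate with the local expansion of $f^p$ at the corresponding (necessarily repelling) periodic cycles in the spherical metric; and the consistency of the ``capping off'' with the pullback, which again hinges on the same isolation $\overline V\cap(f^{-p}(E)\sm E)=\emptyset$ used throughout.
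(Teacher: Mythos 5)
Your high-level strategy matches the paper's: both arguments rest on the same two pillars, the orbifold-metric expansion of $f^p$ near $\JJJ_f$ and the isolation hypothesis that each component of $f^{-p}(E)$ is either $E$ or a positive distance from it. The concrete construction of $V$ differs (you use a metric $\epsilon$-tube plus capping disks; the paper fills in the small complementary components to form $\wt E$, organizes the remaining complementary pieces $\DD$ via the induced map $f_\#$, and attaches nested annuli $A_D$ to the eventually-periodic pieces), but both routes plausibly yield part (1), and your observation that condition (3) of the rational-like definition comes for free because each component of $\cbar\sm V^p$ sits inside a single complementary component of $E$ is correct.

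However, your treatment of part (2) has a genuine gap. You write that ``the component of $f^{-np}(V)$ containing $E$ lies in $T_{\epsilon/\lambda^n}$''; taken literally this is false since that component contains $E$, and the correct version of your estimate only shows $\pa V^{np}\subset T_{\epsilon/\lambda^n}$. That does not force $\bigcap_n V^{np}=E$: a complementary component $D$ of $E$ that cannot be capped off (because it meets $\PPP_f$, or because it is one of the finitely many ``large'' ones) could in principle contribute a persistent ``core'' $D\sm T_{\epsilon/\lambda^n}$ to every $V^{np}$, so the intersection could strictly contain $E$. The paper rules this out with a separate argument which your proposal does not replicate: writing $L=\bigcap_n f^{-np}(V)$, it first shows $L\sm E\subset\JJJ_f$ by observing that any Fatou point in $L\sm E$ would drag along its entire Fatou component, whose forward orbit must eventually hit $\PPP_f$, contradicting $V\cap\PPP_f\subset E$; it then shows a nonempty Julia piece $Y\subset L\sm E$ is impossible by finding the least $n$ with $f^{np}(Y)\cap E\neq\emptyset$ and using that $f^{-p}(E)\sm E$ is at positive distance from $E$ together with $f^p(E)=E$ to derive a contradiction. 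Without an argument along these lines, (2) is not established; the expansion estimate controls only the boundary of $V^{np}$, not what happens inside the complementary components of $E$.
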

\begin{proof}
At first we claim that $\deg f^p|_E \geq 2$. Otherwise $f^p: E \ra E$ would be a bijection. Following the same argument in the proof of \cite[Lemma 18.8]{milnor2006dynamics}, the expansion of $f^p$ in terms of the orbifold metric on $\pa E\subset\mc{J}_f$ implies that $\pa E$ is a finite set, which contradicts the fact that $E$ is a non-degenerate continuum. To simplify the notations, we may assume that $p=1$ in the following. 

Let $\wt{E}$ be the union of $E$ and the components of $\,\cbar \sm E$ which are disjoint with $\PPP_f$ and $\DD$ be the collection of components of $\,\cbar\sm\wt{E}$. Then $\DD$ is a non-empty collection since the Fatou set of $f$ is nonempty. We define a self-map $f_{\#}$ on $\DD$ as follows. If $D\in\DD$ is disjoint from $f^{-1}(E)$,  then $f(D)\in\DD$ and we set $f_{\#}(D):=f(D)$. Otherwise, let $D'$ be the component of $D\sm f^{-1}(E)$ such that $\partial D'\supset\partial D$. In this case $f(D')$ is an element of $\DD$, and we define $f_{\#}(D):=f(D')$.

Since $\DD$ is a finite collection, each of its element is eventually periodic under $f_{\#}$. Assume that $D_i$ ($0\le i<m$) is a cycle in $\DD$ with $D_i=f_{\#}^i(D_0)$ and $D_0=f_{\#}^m(D_0)$. Since $f$ is expanding in a neighborhood of $\JJJ_f$ under the orbifold metric and $\partial E\subset \JJJ_f$, for each $1\le i\le m$, there is an annulus $A_i\subset D_i\sm \PPP_f$ with $\partial D_i\subset\partial A_i$, such that $\ol{A_i^1}\subset A_i\cup\partial D_i$, where $A_i^1$ is the component of $f^{-1}(A_{i+1})$ (set $A_m=A_0$) with $\partial A_i^1\supset\partial D_i$. With a similar argument, we may assign an annulus $A_{D}$ for every periodic element $D$ of $\DD$.

 If $D'\in\DD$ is not $f_{\#}$-periodic but $f_{\#}(D')=D$ is periodic, we assign an annulus $A_{D'}\subset D'\sm \PPP_f$ with $\partial D'\subset\partial A_{D'}$, such that $\ol{A_{D}^1}\subset A_{D'}\cup\partial D'$, where $A_{D}^1$ is the component of $f^{-1}(A_{D})$ with $\partial D'\subset A_{D}^1$. Repeat this assignment, we assign an annulus $A_{D}$ for each element $D\in\DD$. Moreover $A_D$ can be chosen with the property that $A_D\cap f^{-1}(E)=\emptyset$ since any component of $f^{-1}(E)\sm E$ has no intersection with $E$. 

Let $V$ be the union of $\wt{E}$ and $A_{D}$ for all $D\in\DD$. Then $V$ is a finitely connected domain. Moreover, the component $V^1$ of $f^{-1}(V)$ which contains $E$ is compactly contained in $V$ by the construction of $A_{D}$. It is clear that the orbit of every critical points of $f$ in $V^1$ stays in $V^1$ and each component of $\cbar\sm V^1$ contains at most one component of $\cbar\sm V$. Therefore $f:V^1\to V$ is a rational-like map. 

To simplify the notations, we denote $$L:=\bigcap\limits_{n\geq1}f^{-n}(V).$$ Now we want to prove that $E=L$. It is obvious that $E\subset L$. To obtain $L\subset E$, we only need to prove that $L\sm E=\emptyset$.  Assume $L\sm E\neq\emptyset$. Let $Y$ be a component of $L\sm E$. Because $\pa E\subset\JJJ_f$ and $\pa L\subset\JJJ_f$, so as long as $Y$ contains a point $z\in\FFF_f$, it would contain the Fatou component $U$ which contains $z$. Since $(f^{-1}(E)\sm E)\cap V=\emptyset$ and $U\subset L\subset V$, we have $f^k(U)\subset L\sm E$ for all $k\geq 0$. Note that there exists an integer $k\geq 0$ such that $f^k(U)$ is periodic and hence $f^k(U)\cap\PPP_f\neq \emptyset$. On the other hand, $(L\sm E)\cap \PPP_f\subset (V\sm E)\cap \PPP_f=\emptyset$. This is a contradiction and hence $Y\subset\JJJ_f$. 

If $Y$ is not an empty set, there exists $n\geq 1$ such that $f^n(Y)\cap E\neq\emptyset$ and $f^{n-1}(Y)\cap E=\emptyset$ \cite[Theorem 4.10]{milnor2006dynamics}. Hence $f^{n-1}(Y)\subset f^{-1}(E)\sm E$. Since any component of $f^{-1}(E)\sm E$ has positive distance from $E$,  we conclude that $f^{n-1}(\ol{Y})\cap E=\emptyset$. Note that $f(E)=E$ and $\ol{Y}\cap E\neq \emptyset$. Hence $f^{n-1}(\ol{Y})\cap E\neq\emptyset$. This is a contradiction. So $Y$ is an emptyset, which implies that $L\sm E=\emptyset$. The proof is complete.   
\end{proof}

\begin{proof}[Proof of Theorem \ref{thm:renorm}]
Since $U\in\UU_{\G}$ is periodic with period $p\geq 1$, there is a rational map $g$ which is the combinatorial renormalization of $f$ on $U$ by Theorem \ref{thm:combinatorial}. As in the proof of Theorem \ref{thm:combinatorial}, let $\AAA$ be the multi-annulus isotopic to $\G$ rel $\PPP_f$, $F$ be the standard form of $f$ with respect to $\G$ and $W$ be the component of $\cbar\sm\AAA$ isotopic to $U$ rel $\PPP_f$. Then $W^p$, the component of $F^{-p}(W)$ parallel to $W$, is contained in $W$ and $\pa W\subset\pa W^p$. Denote by $W^{np}$ the component of $F^{-np}(W)$ with the same property and 
$$K_U=\bigcap_{n\geq 1}\ol{W^{np}}. $$

Let $h$ be the semi-conjugacy from $F$ to $f$. Now we are going to prove Theorem \ref{thm:renorm} by checking that $h(K_U)$ satisfies the conditions in Lemma \ref{lem:renorm}. Since $F^p(K_U)=K_U$ and $h\circ F=f\circ h$, we have $f^p(h(K_U))=h(K_U)$. 

Denote by $\G_U\subset \G$ the completely stable multicurve generated by curves in $\pa U$. Then by definition each curve in $\G_U$ is coiling in $\G$ since each component of $\pa U$ is coiling in $\G$. Denote by $\AAA_U$ the collection of annuli in $\AAA$ isotopic to $\G_U$. Since $\G_U$ is a completely stable multicurve and each curve in it is coiling in $\G$, Lemma \ref{lem:key} is still true if we replace $\AAA_0$ by $\AAA_U$ in its statement. Hence for any two distinct components $E_1$ and $E_2$ of $\cbar\sm F^{-np}(\AAA_U)$, $h(E_1)\cap h(E_2)=\emptyset$. On the other hand, different components of $F^{-np}(K_U)$ are contained in different components of $\cbar\sm F^{-np}(\AAA_U)$. Hence for any components $K$ of $f^{-np}(h(K_U))$, either $K\cap h(K_U)=\emptyset$ or $K=h(K_U)$. So $h(K_U)$ satisfies the conditions in Lemma \ref{lem:renorm}. Hence there is a neighborhood $V$ of $h(K_U)$ and a component $V^p$ of $f^{-p}(V)$ such that $f^p: V^p\to V$ is a renormalization. Moreover, the straightening map of $f^p:V^p\to V$ is holomorphically conjugate to the combinatorial renormalization $g$ of $f$ on $U$ since they are both Thurston equivalent to the branched covering $G$ which is constructed in the proof of Theorem \ref{thm:combinatorial}. The proof of Theorem \ref{thm:renorm} is complete. 
\end{proof}

\subsection{Proof of Theorem \ref{thm:submulticurve}}\label{sec:domain}
Let $f$ be a PCF rational map and $\G$ be a completely stable multicurve containing coiling curves. As we have seen in \S \ref{sec:combinatorial}, $\G$ gives a combinatorial decomposition of $f$ and induces a dynamics $f_{\G}$ on the collection $\UU_{\G}$ of components of $\cbar\sm\G$. Theorem \ref{thm:renorm} shows that if there is a periodic component $U\in\UU_{\G}$ such that each component of $\pa U$ is a coiling curve of $\G$, then $U$ induces a renormalization of $f$. However, there are examples such that any periodic component in $\UU_{\G}$ is not bounded by coiling curves of $\G$ (see Figure \ref{fig:falsegeneral}). On the other hand, if $\G$ contains a Cantor sub-multicurve $\G'$, then $\G'$ is a completely stable multicurve such that each curve of $\G'$ is coiling in $\G'$. Hence every periodic piece of $f_{\G'}$ is bounded by coiling curves of $\G'$ and Theorem \ref{thm:submulticurve} is a direct corollary of Theorem \ref{thm:renorm}. In this section, we deal the problem under the following assumption. 
\vskip 0.24cm 

\noindent\centerline {\bf Assumption: $\G$ contains no Cantor multicurves. }

\begin{figure}[htbp]
\centering
\includegraphics[width=8cm]{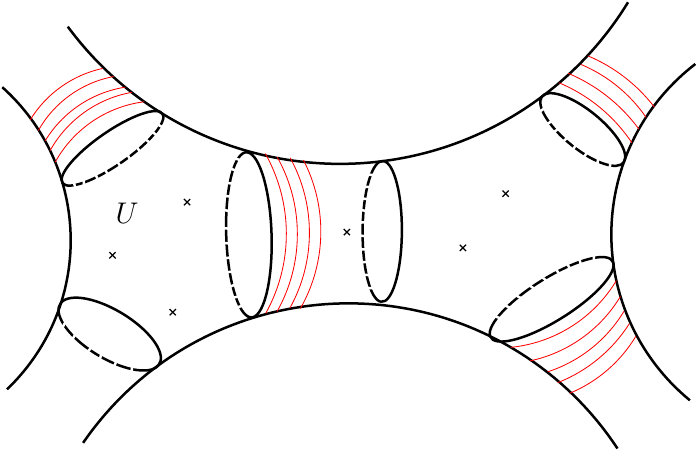}
\caption{Every piece in this picture is not bounded by coiling curves (marked black with some red isotopic curves nearby) in $\G$.}
\label{fig:falsegeneral}
\end{figure}

\vskip 0.24cm
In this case we cannot find a completely stable submulticurve $\G'$ such that each curve in $\G'$ is coiling in $\G'$. 
Fortunately, we can prove that there exists a submulticurve $\G'$ of $\G$ such that some periodic component $U\in\UU_{\G'}$ is bounded by coiling curves of $\G'$. Recall that a cycle containing a periodic curve is defined in \S \ref{sec:dynamicsoncurves}. 

\begin{lem}\label{lem:uniquecycle}
If $\G$ contains no Cantor multicurves, then for any periodic curve $\g\in\G$, the cycle containing $\g$ is unique. 
\end{lem}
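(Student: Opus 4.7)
The plan is a proof by contradiction: assume that some periodic curve $\g\in\G$ carries two distinct cycles $\OO_1$ and $\OO_2$ of lengths $p$ and $q$, and deduce that the completely stable multicurve $\Lambda_{\g}\subset\G$ generated by $\g$ is itself a Cantor multicurve, violating the standing assumption on $\G$. I organize the pullback dynamics on $\Lambda_{\g}$ as a finite directed graph $\mk{G}$ whose vertices are the isotopy classes rel $\PPP_f$ of the curves in $\Lambda_{\g}$, with an edge $\alpha\to\beta$ whenever $f^{-1}(\beta)$ contains a component isotopic to $\alpha$; a cycle containing $\g$ is then precisely a directed cycle through $\g$ in $\mk{G}$. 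From $\OO_1\neq\OO_2$ I would first extract two distinct directed walks of a common length $N$ from $\g$ to $\g$: if $p=q$ the cycles themselves do the job at $N=p$; if $p<q$, one concatenates $\OO_1$ with itself $q$ times and $\OO_2$ with itself $p$ times, producing two walks of length $N=pq$ that disagree at step $p$, since the first walk sits at $\g$ whereas the second sits at the vertex $\beta_p\neq\g$ (the elements of $\OO_2$ being pairwise non-isotopic rel $\PPP_f$).

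Next I lift these two walks to two distinct components of $f^{-N}(\g)$ both isotopic to $\g$. Given a walk $(\g=\alpha_0,\alpha_1,\ldots,\alpha_N=\g)$ in $\mk{G}$ I construct curves $c_N=\g,\,c_{N-1},\ldots,c_0$ inductively, choosing $c_i$ to be a component of $f^{-1}(c_{i+1})$ isotopic to $\alpha_i$; such a $c_i$ exists because an isotopy from $\alpha_{i+1}$ to $c_{i+1}$ can be lifted via $f$ to an isotopy whose endpoint is a component of $f^{-1}(c_{i+1})$ isotopic to $\alpha_i$. Two different walks yield different $c_0$, because the isotopy-class itinerary $([c_0],[f(c_0)],\ldots,[f^N(c_0)])$ is determined by $c_0$ alone. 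Consequently $\kappa_N(\g,\Lambda_{\g})\ge 2$, and the self-replicating bound used in the proof of Lemma~\ref{lem:unique} (each essential component of $f^{-N}(\g)$ isotopic to $\g$ itself pulls back, via the pre-stability of $\Lambda_{\g}$, to at least that many such components in the next period) upgrades this to $\kappa_{mN}(\g,\Lambda_{\g})\ge 2^m$. Thus $\g$ is coiling in $\Lambda_{\g}$.

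Finally, I propagate coiling to every curve of $\Lambda_{\g}$. The inequality $\kappa_{n+1}(\alpha,\Lambda_{\g})\ge\kappa_n(\g,\Lambda_{\g})$ already recorded in \S\ref{sec:dynamicsoncurves}, applied to any essential curve $\alpha\in f^{-1}(\g)$, transfers coiling from $\g$ to $\alpha$; iterating and using that by definition every $\beta\in\Lambda_{\g}$ is isotopic to some component of some $f^{-k}(\g)$, I conclude that every curve of $\Lambda_{\g}$ is coiling in $\Lambda_{\g}$. Hence $\Lambda_{\g}$ is a Cantor multicurve contained in $\G$, the desired contradiction, so $\g$ admits only one cycle. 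I expect the main technical hurdle to be the lifting step in the second paragraph: verifying both that the inductive lift can always be carried out and that distinct walks genuinely produce distinct terminal components $c_0$ (rather than several walks collapsing onto the same curve); once that is in hand, the remainder mirrors the template already established in Lemma~\ref{lem:unique}.
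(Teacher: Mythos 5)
Your argument is correct and follows the same strategy as the paper: two distinct cycles through $\g$ force $F^{-p_1p_2}(\g)$ to contain at least two components isotopic to $\g$, whence $\kappa_{kp_1p_2}(\g,\cdot)\ge 2^k$, so the multicurve $\Lambda_\g$ generated by the two cycles is a Cantor multicurve, a contradiction. The paper's proof is terser and simply asserts the ``at least two components'' step, whereas you supply the walk-extraction and isotopy-lifting details that justify it; these details are accurate and consistent with how the paper handles similar points elsewhere (e.g.\ in Lemma~\ref{lem:unique}).
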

\begin{proof}
If there are two cycles $\OO_1=\{\g_0=\g,\g_1,\cdots,\g_{p_1-1}\}$ and $\OO_2=\{\alpha_0=\g,\alpha_1,\cdots,\alpha_{p_2-1}\}$ containing $\g$, then $F^{-p_1p_2}(\g)$ has at least two components isotopic to $\g$. By definition, $\kappa_{kp_1p_2}(\g,\OO_1\cup\OO_2)\geq 2^k$. So $\kappa_{k\cdot p_1\cdot p_2}(\g,\OO_1\cup\OO_2)\to\infty$ as $k\to\infty$. Thus the completely stable multicurve generated by $\OO_1\cup\OO_2$ is a Cantor multicurve. This is a contradiction. 
\end{proof}
When $\G$ contains no Cantor multicurves, the number of curves in $\OO_{\g}$ is called the {\bf period} of $\g$. Recall that for each periodic curve $\g\in\G$, $\Lambda_{\g}$ is the completely stable multicurve generated by $\g$ as follows: $\beta\in\Lambda_{\g}$ if there exists an integer $k\geq 1$ such that $F^{-k}(\g)$ contains a curve isotopic to $\beta$ rel $\PPP$. Since $\G$ contains no Cantor multicurves, $\Lambda_{\g}$ contains no Cantor multicurves. The above argument shows that for any periodic curve $\alpha\in\Lambda_{\g}$ and any integer $n\geq 1$, $f^{-n}(\OO_{\alpha})$ has one and only one component, denoted by $\alpha^n$, isotopic to $\alpha$ rel $\PPP_f$. 

\begin{proof}[Proof of Theorem \ref{thm:submulticurve}]
Let $\G$ be a completely stable multicurve of $f$ containing no Cantor multicurves but containing a coiling curve. By Lemma \ref{lem:aperiodic-coiling}, $\G$ contains periodic coiling curves. Since $\G$ contains no Cantor multicurves, the period of each periodic curve is well-defined by Lemma \ref{lem:uniquecycle}. Suppose that $\g\in\G$ is a periodic coiling curve with period $p\geq 1$. To simplify the notations, we deform $f$ to a marked branched covering $F$ as follows. 

Let $\theta$ be a homeomorphism of $\cbar$ such that it is isotopic to the identity rel $\PPP_f$ and maps $\alpha$ to $\alpha^1$ for all periodic curve $\alpha\in\Lambda_{\g}$. Set $F=f\circ\theta$. Then $\PPP_F=\PPP_f$ and $F$ is Thurston equivalent to $f$. Moreover $F^p(\g)=\g$. Choose an orientation for $\g$. If $F^p$ reverses the orientation on $\g$, then $F^{2p}$ preserves the orientation on $\g$. Note that any completely stable multicurve under $F$ remains completely stable under $F^2$. Thus we only need to consider the case that $F^p$ is orientation-preserving on $\g$. To simplify the notation, we assume that $p=1$ in the following discussion. 

\begin{lem}\label{lem:oneside}
If $F: \g\to\g$ is orientation-preserving, then for each curve $\alpha\in\G\sm\{\g\}$, all curves in $\{F^{-n}(\alpha)\}_{n\geq 1}$ isotopic to $\g$ lies in the same component of $\cbar\sm\g$. 
\end{lem}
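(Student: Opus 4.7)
The plan is to combine the local orientation-preservation of $F$ near $\g$ with the uniqueness of $\g$-isotopic preimage components, which follows from the absence of Cantor multicurves in $\G$. Write $D^+$ and $D^-$ for the two components of $\cbar\sm\g$. Since $F$ is orientation-preserving both on the sphere and on $\g$, at each point of $\g$ the map $F$ carries a one-sided neighborhood on the $D^+$-side of $\g$ into a one-sided neighborhood on the $D^+$-side of $F(\g)=\g$, and symmetrically for $D^-$.

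The heart of the argument is the following key statement: for every Jordan curve $\beta\subset\cbar\sm\PPP_f$ isotopic to $\g$ rel $\PPP_f$, the preimage $F^{-1}(\beta)$ contains exactly one component $\beta^*$ isotopic to $\g$, and $\beta^*\subset D^+$ if and only if $\beta\subset D^+$ (similarly for $D^-$). Uniqueness comes from applying the remark after Lemma~\ref{lem:uniquecycle} to the periodic curve $\g$ itself, so that $F^{-1}(\g)$ has only $\g$ as its $\g$-isotopic component, together with homotopy lifting along an isotopy $H_t$ from $\g$ to $\beta$. For the side assignment, when $\beta\subset D^+$ I would choose $H_t\subset\ol{D^+}$ with $H_t\cap\g=\emptyset$ for $t>0$; the lift $\beta_t^*$ starts at $\g$ at $t=0$ and, by the local side preservation above, moves off $\g$ only into $D^+$, and $\beta_t^*$ cannot later return to $\g$ since that would force some point of $H_t$ to lie on $F(\g)=\g$.

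Given this key statement, the lemma reduces by tracing forward orbits. For any $\td\beta\in F^{-n}(\alpha)$ isotopic to $\g$, let $k\geq 1$ be the smallest index with $F^k(\td\beta)\not\sim\g$; such $k$ exists because $\alpha\not\sim\g$. Then $\td\beta,F(\td\beta),\ldots,F^{k-1}(\td\beta)$ are all isotopic to $\g$, and iterating the key statement places them all on the same side of $\g$. So $\td\beta$ shares its side with the first-appearance curve $F^{k-1}(\td\beta)$, which is a $\g$-isotopic $F$-preimage of the non-$\g$-isotopic curve $\sigma:=F^k(\td\beta)\in F^{-(n-k)}(\alpha)$.

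The main obstacle is this final ``first-appearance'' reduction: to show that for every $\sigma$ arising in this way, all $\g$-isotopic components of $F^{-1}(\sigma)$ lie on a common side of $\g$ determined by $\sigma$, and that this common side is consistent across the different $\sigma$'s. I expect to handle it by localizing to the maximal annulus $N$ in the isotopy class of $\g$ in $\cbar\sm\PPP_f$ and to the annular component $N'$ of $F^{-1}(N)$ containing $\g$: the map $F|_{N'}\colon N'\to N$ is a cyclic covering of annuli sending each half-annulus of $N'\sm\g$ to the corresponding half-annulus of $N\sm\g$. Once one verifies that every $\g$-isotopic component of $F^{-1}(\sigma)$ necessarily lies in $N'$, its side is rigidly determined by where $\sigma$ meets $F(N')$, and consistency across different $\sigma$'s in the preimage tree of $\alpha$ should then follow by induction on depth, using the key statement above at each step.
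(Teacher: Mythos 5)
Your ``key statement'' --- that a curve $\beta$ isotopic to $\g$ rel $\PPP_f$ has a unique $\g$-isotopic preimage $\beta^*$ in $F^{-1}(\beta)$, lying on the same side of $\g$ as $\beta$ --- is correct, and your proof of it (lift an isotopy $H_t$ with $H_t\cap\g=\emptyset$ for $t>0$, read off the initial side from local orientation preservation, and note the lift never re-crosses $\g$ because $H_t$ never meets $F(\g)=\g$) is sound. It is exactly the cross-level step in the paper's own proof (its final paragraph), and you correctly observe that uniqueness relies on the no-Cantor hypothesis via $\kappa_n(\g,\Lambda_\g)=1$.

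The genuine gap is the piece you yourself flag: the consistency of first-appearance curves, which is the same-level statement that does the real work in this lemma. Your sketched resolution has several problems. First, a $\g$-isotopic component $\tau$ of $F^{-1}(\sigma)$ is only \emph{isotopic} to $\g$ rel $\PPP_f$; there is no reason it lies inside your annulus $N'$, and without first deforming $F$ to a standard form adapted to an invariant multi-annulus (Step~1 of \S\ref{sec:combinatorial}, which your proposal does not set up) the containment $\tau\subset N'$ need not hold. Second, your sketch does not invoke the no-Cantor hypothesis at this stage, yet it is indispensable: without $\kappa_n(\g,\Lambda_\g)=1$, the set $F^{-n}(\g)$ can contain $\g$-isotopic curves on both sides of $\g$, and the consistency you want is simply false. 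Third, even granting the containment, the ``induction on depth'' does not close: a first-appearance curve $\tau$ with $F(\tau)=\sigma$ at depth $m+1$ is not related by your key statement to a first-appearance curve for $F(\sigma)$ at depth $m$, so nothing forces different branches of the preimage tree to agree.

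For comparison, the paper closes this gap by a direct contradiction argument. After reducing to periodic $\alpha$, it supposes the $\g$-isotopic curves in $F^{-n}(\alpha)$ lie on both sides of $\g$, picks the two closest such curves $\delta_1,\delta_2$, and shows that the annulus $B$ between $\g$ and $\alpha$ has two disjoint preimage components $B_1\subset C_1$ and $B_2\subset C_2$ (where $C_i$ is the annulus between $\delta_i$ and $\g$), each of whose boundaries must contain a $\g$-isotopic component of $F^{-n}(\g)$. That yields two distinct $\g$-isotopic curves in $F^{-n}(\g)$, contradicting Lemma~\ref{lem:unique}. Some argument of this shape --- one that explicitly manufactures a second $\g$-isotopic preimage of $\g$ from the assumption of curves on both sides, thereby using the no-Cantor hypothesis --- is what your proposal is missing.
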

\begin{proof}
We first prove that if the conclusion is true for all periodic curves, then it is true for all curves in $\G\sm\{\g\}$. In fact, if $\alpha$ is not periodic, then by Lemma \ref{lem:pre-periodic} there is a period curve $\beta\in\G$, such that some components of $F^{-k}(\beta)$ are isotopic to $\alpha$ rel $\PPP_f$ for some integer $k\geq 1$ since $\G$ is pre-stable. So if some components of $F^{-n}(\alpha)$ are isotopic to $\g$ rel $\PPP_f$, then some components of $F^{-n-k}(\beta)$ are isotopic to $\g$ rel $\PPP_f$. Denote by $A$ the smallest annulus, may be degenerate, containing $\alpha$ and the components of $F^{-k}(\beta)$ which are isotopic to $\alpha$. Then $A\cap\PPP_f=\emptyset$. If by assumption $F^{-n-k}(\beta)$ lies in the same component of $\cbar\sm\G$ for all $n\geq 1$, then the components of $F^{-n}(A)$ isotopic to $\g$ lies in the same component of $\cbar\sm\g$. Otherwise, there is a component $A^n$ of $F^{-n}(A)$ containing $\g$ and then $F^{n}(\g)\subset A$. On the other hand, $F^n(\g)=\g$ and $\g$ is not isotopic to $\alpha$ rel $\PPP_f$. This is a contradiction. This implies that the conclusion is true for $\alpha$. 

\begin{figure}[htbp]
\centering
\includegraphics[width=5cm]{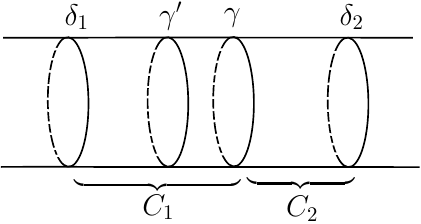}
\caption{$\g'$ is another component of $F^{-n}(\g)$.}
\label{fig:Lambda}
\end{figure}

Now we assume that $\alpha$ is periodic and some component of $F^{-n}(\alpha)$ is isotopic to $\g$ rel $\PPP_f$. Then the number of curves in $F^{-n}(\alpha)$ tends to infinity as $n$ tends to infinity by Lemma \ref{lem:periodcoiling}. If the curves in $F^{-n}(\alpha)$ which are isotopic to $\g$ rel $\PPP_f$ are separated by $\g$, we can choose two components $\delta_1$ and $\delta_2$ of them such that the annulus $A$ bounded by $\delta_1 $ and $\delta_2$ is disjoint from any essential components of $F^{-n}(\alpha)$ and $\g\subset A$. Denote by $B$ the annulus bounded by $\g$ and $\alpha$. Then $F^{-n}(B)$ has two components $B_1$ and $B_2$ which have disjoint closure such that $B_1$ is contained in the annuli $C_1$, which is bounded by $\delta_1$ and $\g$, and $B_2$ is contained in $C_2$, which is bounded by $\g$ and $\delta_2$. Since there are no essential components of $F^{-n}(\alpha)$ contained in $A$, it follows that both $\pa B_1$ and $\pa B_2$ contain components of $F^{-n}(\g)$ isotopic to $\g$ rel $\PPP_f$ and they are disjoint with each other. This implies that $F^{-n}(\g)$ has at least two components isotopic to $\g$ rel $\PPP_f$. This contradicts the fact that $\Lambda_{\g}$ is not a Cantor multicurve. So for any fixed $n\geq 1$, $F^{-n}(\alpha)$ which is isotopic to $\g$ rel $\PPP_f$ lies in one and the same component $D$ of $\cbar\sm\g$. 

Since $F(\g)=\g$ and $F$ is orientation-preserving on $\g$, some curves in $F^{-n-1}(\alpha)$ are isotopic to $\g$ rel $\PPP_f$ and lie in $D$. By the above discussion, all curves in $F^{-n-1}(\alpha)$ isotopic to $\g$ rel $\PPP_f$ lies in $D$. So for all $n\geq 1$, all curves in $\{F^{-n}(\alpha)\}_{n\geq 1}$ isotopic to $\g$ lies in one and the same component of $\cbar\sm\g$. 
\end{proof}
We may assume that there is a periodic curve $\alpha\in\G$ such that the components of $F^{-n}(\alpha)$ isotopic to $\g$ rel $\PPP_f$ lie in the left side of $\g$. For each curve $\beta\in\G$, denote by $\CC_{n}(\g,\beta)$ the curves in $F^{-n}(\beta)$ isotopic to $\g$ rel $\PPP_F$. Set 
$$\G^*=\{\beta\in\G: \CC_n(\g,\beta) \text{ lie in the left side of }\g\}. $$
Then $\G^*\neq\emptyset$ is completely stable and each curve in $\Lambda_{\g}$ is coiling in $\G^*$. Recall that $\CC_n(\g,\G^*)$ is the collection of curves in $F^{-n}(\G^*)$ which is isotopic to $\g$ rel $\PPP_f$. Then all curves in $\CC_n(\g,\G^*)\sm \{\g\}$ lie in the same component of $\cbar\sm \g$. 

Since $\kappa_n(\g,\Lambda_{\g})=1$ for all $n\geq 1$, both of the components of $\cbar\sm\Lambda_{\g}$ having $\g$ as a boundary component is fixed under the map $f_{\Lambda_{\g}}$. Let $U_{\g}$ be the one lying in the right side of $\g$. Then it is disjoint from $\CC_n(\g,\G^*)$. Denote by $\UUU_{\g}$ the union of all the components of $\cbar\sm\Lambda_{\g}$ in the grand orbit of $U_{\g}$ under $f_{\Lambda_{\g}}$ and 
$$
\G'=\{\beta\in\G^*:\, \beta\text{ is disjoint from }\UUU_{\g}\}.
$$
Because $\Lambda_{\g}$ is exactly the collection of the boundary components of $\UUU_{\g}$ and $\UUU_{\g}$ is an open set, any curve in $\Lambda_{\g}$ is disjoint from $\UUU_{\g}$. Hence $\Lambda_{\g}\subset\G'$. 
\begin{lem}\label{lem:one-side}
The following statements hold.
\begin{itemize}
\item[(1)] $\G'$ is completely stable.
\item[(2)] $U_{\g}\in\UU_{\G'}$ is fixed by $f_{\G'}$.
\item[(3)] Each curve in $\Lambda_{\g}$ is a coiling curve of \,$\G'$.
\end{itemize}
\end{lem}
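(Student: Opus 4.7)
The plan is to exploit the forward-and-backward invariance of the grand orbit $\UUU_{\g}$ (equivalently, of its complement in $\UU_{\Lambda_{\g}}$) under $f_{\Lambda_{\g}}$ for parts (1) and (2), and to combine the left/right dichotomy defining $\G^*$ with the dynamical fact that both components of $\cbar\sm\Lambda_{\g}$ adjacent to $\g$ are fixed by $f_{\Lambda_{\g}}$ for part (3). Let $U'$ denote the component of $\cbar\sm\Lambda_{\g}$ adjacent to $\g$ other than $U_{\g}$; since $U_{\g}$ and $U'$ are distinct fixed points of $f_{\Lambda_{\g}}$, they lie in different grand orbits, and in particular $U'$ is not one of the components making up $\UUU_{\g}$.

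For part (1), to verify stability, I take $\alpha\in\G'$ and any essential component $\beta'\subset F^{-1}(\alpha)$; complete stability of $\G$ gives $\beta'$ isotopic to some $\beta\in\G$, and complete stability of $\G^*$ places $\beta$ in $\G^*$. To see $\beta$ is disjoint from $\UUU_{\g}$, I may assume $\beta\notin\Lambda_{\g}$ (otherwise $\beta\in\Lambda_{\g}\subset\G'$ already) and let $V_{\beta},V_{\alpha}\in\UU_{\Lambda_{\g}}$ be the components containing the isotopy classes of $\beta,\alpha$, so that $f_{\Lambda_{\g}}(V_{\beta})=V_{\alpha}$. Since $\alpha\in\G'$ forces $V_{\alpha}$ outside $\UUU_{\g}$, backward invariance of the complement gives $V_{\beta}$ outside $\UUU_{\g}$, so $\beta\in\G'$. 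Pre-stability is the exactly dual argument using forward invariance of the complement.

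For part (2), every curve in $\G'\sm\Lambda_{\g}$ lies outside $\UUU_{\g}\supset U_{\g}$, so no such curve enters $U_{\g}$; combined with $\pa U_{\g}\subset\Lambda_{\g}\subset\G'$ this makes $U_{\g}$ a component of $\cbar\sm\G'$. Let $U_{\g}^{1,\Lambda}$ be the unique component of $\cbar\sm F^{-1}(\Lambda_{\g})$ parallel to $U_{\g}$, so $F(U_{\g}^{1,\Lambda})=U_{\g}$. I claim the parallel component $U_{\g}^{1,\G'}$ coincides with $U_{\g}^{1,\Lambda}$ up to simple-type pieces: an essential preimage of a curve in $\G'\sm\Lambda_{\g}$ sitting inside $U_{\g}^{1,\Lambda}$ would be carried by $F$ into $U_{\g}\subset\UUU_{\g}$, yet its image must lie in $\G'\sm\Lambda_{\g}$ and hence outside $\UUU_{\g}$ --- a contradiction. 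Thus $f_{\G'}(U_{\g})=F(U_{\g}^{1,\G'})=U_{\g}$.

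Part (3) is the main obstacle. The key step is to show $\kappa_n(\g,\{\beta\})=0$ for every $\beta\in\G^*\sm\G'$, whence $\kappa_n(\g,\G')=\kappa_n(\g,\G^*)\to\infty$ by the hypothesis that $\g$ is coiling in $\G^*$. Fix such a $\beta$, lying inside a component $V_{\beta}$ of $\UUU_{\g}$, and suppose $\beta'\subset F^{-n}(\beta)$ is an essential component isotopic to $\g$. Then $\beta'$ lies in a component $V^n$ of $\cbar\sm F^{-n}(\Lambda_{\g})$ parallel to some $V'\in\UU_{\Lambda_{\g}}$ with $f_{\Lambda_{\g}}^n(V')=V_{\beta}$; backward invariance places $V'$ in $\UUU_{\g}$, and for $V^n$ to contain a curve isotopic to $\g$ we need $\g\in\pa V'$, so $V'\in\{U_{\g},U'\}$; since $U'$ is not in $\UUU_{\g}$, we conclude $V'=U_{\g}$, placing $\beta'$ on the right side of $\g$ and contradicting that $\beta\in\G^*$ requires such preimages to lie on the left. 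Thus $\g$ is coiling in $\G'$, and for any essential $\alpha\subset F^{-1}(\g)$ --- automatically isotopic to a curve in $\Lambda_{\g}\subset\G'$ --- the standard lifting estimate $\kappa_{n+1}(\alpha,\G')\geq\kappa_n(\g,\G')$ transfers coiling to $\alpha$; iterating along the inductive generation of $\Lambda_{\g}$ from $\g$ shows every curve in $\Lambda_{\g}$ is coiling in $\G'$. The subtlety is that the side constraint imposed by $\G^*$ and the grand-orbit constraint defining $\G'$ close together only because $U_{\g}$ and $U'$ are distinct fixed points.
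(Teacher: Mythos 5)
Your overall strategy tracks the paper's own proof quite closely: part (1) uses backward/forward invariance of the grand orbit $\UUU_{\g}$ exactly as the paper does via the sets $\UUU_{\g}^n$; part (2) is the same argument; and for part (3) the key reduction $\kappa_n(\g,\G')=\kappa_n(\g,\G^*)$ is also the paper's. The one genuine gap is in part (3): you write that $\beta'$ ``lies in a component $V^n$ of $\cbar\sm F^{-n}(\Lambda_{\g})$ parallel to some $V'\in\UU_{\Lambda_{\g}}$,'' which tacitly assumes $V^n$ is of complex type. But since $\beta\subset\UUU_{\g}$ is a curve interior to a piece, the preimage $\beta'$ may perfectly well sit in an \emph{annular-type} component of $\cbar\sm F^{-n}(\Lambda_{\g})$, which is not parallel to anything in $\UU_{\Lambda_{\g}}$, and your argument does not touch this case. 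The paper handles it separately: in that case both boundary curves of the annular component lie in $F^{-n}(\Lambda_{\g})$ and are isotopic to $\beta'\sim\g$, which gives two distinct curves in $F^{-n}(\Lambda_{\g})$ isotopic to $\g$, contradicting $\kappa_n(\g,\Lambda_{\g})=1$ (Lemma~\ref{lem:unique}). Once you add that subcase your proof is complete.

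Two smaller remarks. Your complex-type step hinges on the claim that if $V^n$ (parallel to $V'$) contains a curve isotopic to $\g$ then $\g\in\pa V'$; this is correct, but it deserves a one-line justification (the annulus between $\g$ and $\beta'$ is disjoint from $\PPP_f$, cannot contain any $\lambda\in\Lambda_{\g}\sm\{\g\}$ without forcing $\lambda\sim\g$, hence lies in a single component of $\cbar\sm\Lambda_{\g}$ adjacent to $\g$, so $V'$ must be that component). And when you deduce ``placing $\beta'$ on the right side of $\g$'' from $V'=U_{\g}$, you are implicitly using $\kappa_n(\g,\Lambda_{\g})=1$ to conclude that $\g$ itself (not some other curve isotopic to $\g$) is the relevant boundary curve of $V^n$, so $V^n$ literally lies on the right of $\g$; this is fine but worth flagging since the same lemma is what closes the annular gap above.
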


\begin{proof}
(1) For each $n\ge 0$, let $\UUU_{\g}^n$ be the union of all the complex type components of $\cbar\sm f^{-n}(\Lambda_{\g})$ parallel to some component of $\UUU_{\g}$ rel $\PPP_f$. Then each component of $f(\UUU_{\g}^{n+1})$ is a component of $\UUU^n_{\g}$. Conversely, each complex type component of $f^{-1}(\UUU^n_{\g})$ is a component of $\UUU^{n+1}_{\g}$.

For any $\beta\in\G'$, $\beta$ is disjoint from $\UUU_{\g}$. Thus $f^{-1}(\beta)$ is disjoint from $\UUU_{\g}^1$. Therefore each essential curve in $f^{-1}(\beta)$ is isotopic to a curve in $\G'$ rel $\PPP_f$. So $\G'$ is a stable multicurve.

Conversely, since $\G^*$ is pre-stable, there is a curve $\alpha\in\G^*$ such that $f^{-1}(\alpha)$ contains a curve $\beta^1$ isotopic to $\beta\in\G'$ rel $\PPP_f$. Either $\alpha\subset\UUU_{\g}$ or $\alpha$ is disjoint from $\UUU_{\g}$. In the latter case, $\alpha\in\G'$. In the former case, $\beta^1\subset\UUU_{\g}^1$. So $\beta^1$ cannot be isotopic to any curve in $\G'\sm\Lambda_{\g}$ rel $\PPP_f$ and hence $\beta\in\Lambda_{\g}$. Noticing that $\Lambda_{\g}$ is pre-stable, there is a curve $\alpha'\in\Lambda_{\g}\subset\G'$ such that $f^{-1}(\alpha')$ contains a curve isotopic to $\beta$ rel $\PPP_f$. Therefore $\G'$ is pre-stable.

(2) By the definition of $\G'$, the component $U_{\g}$ of $\cbar\sm\Lambda_\g$ is also a component of $\cbar\sm\G'$. Recall that $U_{\g}$ is a fixed component of $f_{\Lambda_{\g}}$, i.e., $f(U_{\g}^1)=U_{\g}$, where $U_{\g}^{1}$ is the component of $\cbar\sm f^{-1}(\Lambda_{\g})$ parallel to $U_{\g}$ rel $\PPP_f$. As above, for any $\beta\in\G'$, $f^{-1}(\beta)$ is disjoint from $\UUU_{\g}^1$. Thus $U^1_{\g}$ is a component of $\cbar\sm f^{-1}(\G')$. So $U_{\g}\in\UU_{\G'}$ is fixed by $f_{\G'}$.

\begin{figure}[htbp]
\centering
\includegraphics[width=11cm]{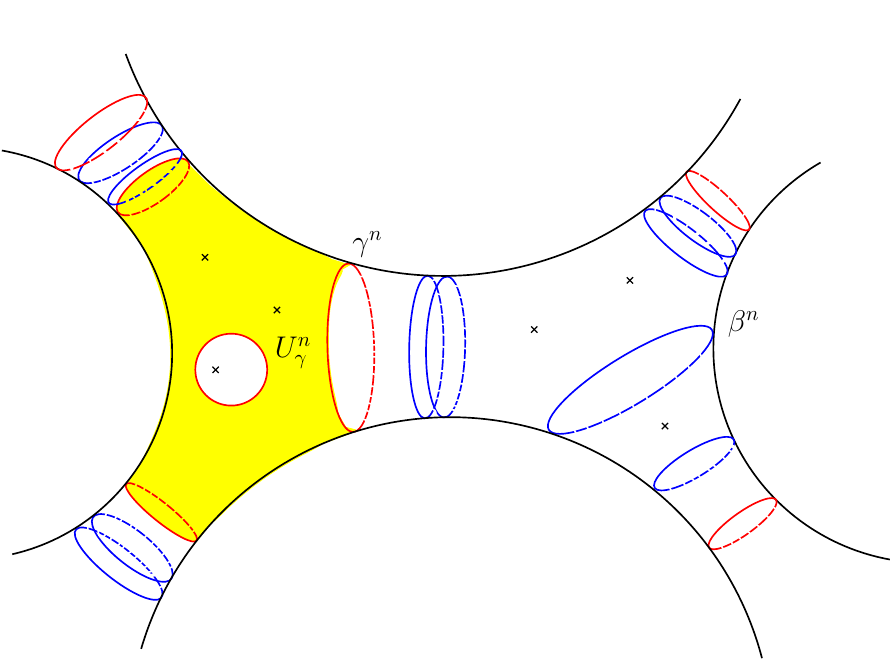}
\caption{The curve $\g$ is a coiling curve of $\G'$, which is isotopic to $\g^n$ rel $\PPP_f$. The curve $\beta$ is a curve in $\G'\sm\Lambda_{\g}$, which is isotopic to $\beta^n$ and makes $\g$ coiling in $\G'$. All of the red curves are components of $f^{-n}(\g)$ and all blue curves are components of $f^{-n}(\beta)$ for some integer $n\geq 1$. }
\label{fig:Lambda}
\end{figure}

(3) Note that each curve in $\Lambda_{\g}$ is isotopic to a curve in $f^{-k}(\g)$ rel $\PPP_f$ for some integer $k\ge 1$ and for any essential curve $\alpha\in f^{-k}(\g)$, $\kappa_{n+k}(\alpha,\G)\geq\kappa_n(\g,\G)$. Thus we only need to show that $\g$ is a coiling curve of $\G'$.

For any curve $\alpha\in\G^*\cap\UUU_{\g}$, each essential curve $\delta$ in $f^{-n}(\alpha)$ is contained either in $\UUU_{\g}^{n}$ or in an annular type component $A$ of $\cbar\sm f^{-n}(\Lambda_{\g})$. In the former case $\delta$ is not isotopic to $\g$ rel $\PPP_f$ since $\UUU^n_{\g}$ is disjoint from $\CC_n(\g,\G^*)$. In the latter case, let $\delta_0$ and $\delta_1$ be the two components of $\partial A$. Then both of them are isotopic to the essential curve $\delta$ which is contained in $A$ rel $\PPP_f$ and contained in $f^{-n}(\Lambda_{\g})$. If $\delta$ is isotopic to $\g$ rel $\PPP_f$, then $f^{-n}(\Lambda_{\g})$ would contain two distinct curves isotopic to $\g$ rel $\PPP_f$. This is impossible by Lemma \ref{lem:unique}. In summary, for each curve $\alpha\in\G^*\sm\G'$, $f^{-n}(\alpha)$ contains no curves isotopic to $\g$ rel $\PPP_f$. So $\kappa_n(\g,\G')=\kappa_n(\g,\G^*)\to\infty$ as $n\to\infty$. In other words, $\g$ is a coiling curve of $\G'$. 
\end{proof}
Now we get a completely stable multicurve $\G'$ and a periodic component $U$, which is the component $U_{\g}$ in the Lemma \ref{lem:one-side}, such that each component of $\pa U$ is a coiling curve in $\G'$. Hence by Theorem \ref{thm:renorm}, $U$ induces a renormalization of $f$. This completes the proof of Theorem \ref{thm:submulticurve}. 
\end{proof}

\section{Rational Maps with coiling curves}\label{sec:example}
In this section, by tuning and disk-annulus surgery, we will construct rational maps with completely stable multicurves which contain coiling curves but no Cantor multicurves. A byproduct is a sufficient condition for a Fatou domain to be a Jordan domain (refer to Theorem \ref{thm:coiledJordan}). 

\subsection{Tuning}
Tuning operation between polynomials was first introduced by Douady and Hubbard \cite{douady1985dynamics}. Pilgrim studied the tuning of PCF rational maps with PCF polynomials in his thesis \cite{pilgrim1995cylinders}. We will introduce the tuning on a fixed Fatou domain of PCF rational maps for simplicity. 

Let $R$ be a PCF rational map with a fixed Fatou domain $D$ and $\phi$ be a B\"{o}ttcher coordinate of $D$, i.e., $\phi$ is a conformal map from $D$ onto the unit disk $\mathbb{D}$ such that $\phi\circ R\circ\phi^{-1}(z)=z^d$, where $d:=\deg R|_D$. Then $\phi^{-1}$ can be extended continuously to $\overline{\mathbb{D}}$ since $\partial D$ is locally connected \cite[Theorem 19.7]{milnor2006dynamics}.

Let $P$ be a monic PCF polynomial with degree $\deg P=d$ such that $\PPP_P$ contains the origin. Let $\chi(z)=|z|z/(1-|z|)$. Then $\chi:\,\mathbb{D}\to\mathbb{C}$ is a homeomorphism. Thus $\chi^{-1}\circ P\circ\chi$ is a branched covering of $\mathbb{D}$, which can be continuously extended to $\overline{\mathbb{D}}$ with boundary value $z\mapsto z^d$. Thus $\phi^{-1}\circ\chi^{-1}\circ P\circ\chi\circ\phi$ is a branched covering of $D$ whose boundary value coincides with $R$.

The {\bf formal tuning} of $R$ with $P$ in $D$ is defined by
$$
G=
\begin{cases}
R & \text{on }\cbar\sm D, \\
\phi^{-1}\circ\chi^{-1}\circ P\circ\chi\circ\phi & \text{on }D.
\end{cases}
$$
Since $\PPP_P$ contains the origin, we have $\PPP_G=(\phi^{-1}\circ\chi^{-1}(\PPP_P))\cup\PPP_R$. Thus $G$ is a PCF branched covering. If $G$ has no Thurston obstructions, then it is Thurston equivalent to a rational map $g$, which is called the {\bf tuning} of $R$ with $P$ in $D$. When $\#\PPP_P=2$, the tuning of $R$ with $P$ is trivial and is Thurston equivalent to a $R$ itself. 

Here we remark that the B\"{o}ttcher coordinate $\phi$ is unique up to rotations with angles $2k\pi/(d-1)$ ($1\le k<d-1$). Thus the definition of formal tuning has $d-1$ choices when $d>2$.

\subsection{Coiled Fatou domains}\label{sec:coiled}
Let $(f,\PPP)$ be a marked rational map. 
A {\bf pseudo-multicurve} $\G\subset\cbar\sm \PPP$ is a finite collection of essential or peripheral curves that are pairwise disjoint and pairwise non-isotopic \cite{dudko2022canonical}. We would like to mention that for any pseudo-multicurve $\G$, there might exist components of $\cbar\sm\G$ that are not complex type. Therefore the definition of $\UU^1$ in \S \ref{sec:combinatorial} needs to be replaced by the collection of components of $\cbar\sm f^{-1}(\G)$ which parallel to some components of $\cbar\sm\G$ rel $\PPP_f$. By the same argument as before, we can prove the following result: 
\begin{thm}\label{thm:prenorm}
Let $f$ be a PCF rational map and $\G$ be a completely stable pseudo-multicurve. Suppose $U$ is a component of $\,\cbar\sm\G$ which is periodic with period $p\geq 1$ under $f_{\G}$. If every curve in $\pa U$ is a coiling curve in $\G$, then there exists a finitely connected domain $V$, isotopic to $U$ rel $\PPP_f$, and a component $V^p$ of $f^{-p}(V)$ such that $f^p:V^p\to V$ is a renormalization. 
\end{thm}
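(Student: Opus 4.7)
The plan is to adapt the proof of Theorem~\ref{thm:renorm} to the pseudo-multicurve setting, the only substantive change being the extension of the standard-form construction of \S\ref{sec:combinatorial} to allow peripheral curves. First I would choose a multi-annulus $\AAA_\G\subset\cbar\sm\PPP_f$ isotopic to $\G$ rel $\PPP_f$: each essential curve in $\G$ corresponds to an annulus bounded by two isotopic essential Jordan curves, while each peripheral curve around a marked point $a\in\PPP_f$ corresponds to a thin annulus nested around $a$ in $\cbar\sm\PPP_f$. Using pre-stability of $\G$ and the same isotopy deformation as in Step~1 of \S\ref{sec:combinatorial}, I would deform $f$ to a branched covering $F$ Thurston equivalent to $f$ rel $\PPP_f$ such that the essential components of $F^{-1}(\AAA_\G)$ are contained in $\AAA_\G$ and $\pa\AAA_\G\subset\pa F^{-1}(\AAA_\G)$. (Peripheral components of $F^{-1}(\AAA_\G)$ are automatically controlled because their isotopy classes are determined by their enclosed marked point.)

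Next, let $W$ be the component of $\cbar\sm\AAA_\G$ parallel to $U$ rel $\PPP_f$, and let $W^{np}$ denote the nested sequence of pullbacks satisfying $\pa W\subset\pa W^{np}\subset W$. Set $K_U=\bigcap_{n\geq 1}\overline{W^{np}}$. Modifying $F$ to be holomorphic in a neighborhood of its attracting cycles (as in Step~5 of \S\ref{sec:combinatorial}) and applying Theorem~\ref{thm:semiconjugacy} yields a semi-conjugacy $h$ between $F$ and $f^p$. Let $L:=h(K_U)$; then $f^p(L)=L$. By Lemma~\ref{lem:renorm}, to conclude it suffices to show that every component of $f^{-p}(L)$ either coincides with $L$ or is disjoint from $L$.

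The crucial step is establishing this disjointness property, and here is where the coiling hypothesis enters exactly as in the proof of Theorem~\ref{thm:renorm}. Let $\G_U\subset\G$ be the completely stable pseudo-multicurve generated by the curves in $\pa U$, and let $\AAA_U\subset\AAA_\G$ be the associated collection of annuli. Since every curve in $\pa U$ is coiling in $\G$, every curve in $\G_U$ is coiling in $\G$, so for any $M\geq 1$ there is $n\geq 1$ with at least $M+1$ components of $F^{-n}(\AAA_\G)$ essentially contained in each component of $\AAA_U$. I would then reproduce the argument of Lemma~\ref{lem:empty} verbatim to show that the set $T=\{z\in\cbar:h^{-1}(z)\text{ crosses a component of }\AAA_U\}$ is empty, and deduce the analog of Lemma~\ref{lem:key}: distinct components of $\cbar\sm F^{-n}(\AAA_U)$ have disjoint $h$-images. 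Via $h\circ F=f^p\circ h$ this gives the required disjointness for components of $f^{-p}(L)$, and Lemma~\ref{lem:renorm} produces a neighborhood $V$ of $L$ inside $h(W)$ such that $f^p:V^p\to V$ is a rational-like map with $V$ isotopic to $U$ rel $\PPP_f$.

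The main obstacle is verifying that the topological separator argument underlying Lemmas~\ref{lem:top}--\ref{lem:empty} survives the presence of peripheral curves. I expect this to be routine rather than deep: Lemma~\ref{lem:top} is stated for an arbitrary finite disjoint union of Jordan curves, so it applies equally to peripheral components of $\AAA_\G$ and to essential ones; and Lemma~\ref{lem:empty} uses only that the number of non-isotopic essential preimage curves accumulating inside each component of $\AAA_U$ tends to infinity, a property which holds for peripheral coiling curves in $\G_U$ since coiling is defined via $\kappa_n$ uniformly across essential and peripheral curves. Thus the proof transfers with only cosmetic changes, and no new ideas beyond those already deployed in Theorem~\ref{thm:renorm} are required.
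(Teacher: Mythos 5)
Your proposal is correct and follows essentially the same route the paper intends: the paper itself presents Theorem~\ref{thm:prenorm} with only the remark ``By the same argument as before, we can prove the following result,'' meaning exactly the adaptation of the proof of Theorem~\ref{thm:renorm} — standard form, $K_U$, semi-conjugacy, the Lemma~\ref{lem:empty}/\ref{lem:key} separation argument using the coiling hypothesis, and Lemma~\ref{lem:renorm} — that you describe. Your observation that Lemma~\ref{lem:top} and the counting in Lemma~\ref{lem:empty} are agnostic to whether the curves are essential or peripheral is precisely the reason the transfer is routine.
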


Based on this theorem, we can prove Theorem \ref{thm:coiledJordan}. 
\begin{proof}[Proof of Theorem \ref{thm:coiledJordan}]
We use the notations $\alpha,\beta,a$ and $D$ in Definition \ref{def:coilingdomain} before Theorem \ref{thm:coiledJordan}. We may assume that $\beta\subset D$. Denote by $\Lambda$ the pseudo-multicurve generated by $\alpha$. Then $\beta\in\Lambda$ and is coiling in $\Lambda$. Moreover the component of $\cbar\sm\beta$ which is disjoint from $\alpha$ has no intersection with $f^{-n}(\Lambda)$ for all $n\geq 1$. Taking $U$ in Theorem \ref{thm:prenorm} as the component of $\cbar\sm\Lambda$ containing the point $a$, then there is a simply connected domain $V$ whose boundary is isotopic rel $\PPP_f$ to $\beta$ and a component $V^1$ of $f^{-1}(V)$ such that $f:V^1\to V$ is a renormalization. Moreover, $V\cap\PPP_f=V^1\cap\PPP_f=\{a\}$. Hence $V^1$ is simply connected and $f:V^1\to V$ is a polynomial-like map whose post-critical set is a singleton. So $f:V^1\to V$ is hybrid equivalent to the map $g(z)=z^k$, where $k=\deg_af$. 

Denote $$\mc{K}=\bigcap_{n\geq 1}f^{-n}(V). $$
Then $\phi(\mc{K})=\ol{\mathbb{D}}$, where $\phi$ is the straightening map from $f: V^1\to V$ to $g(z)=z^k$. Hence $\mc{K}$ is a topologically disc with connected interior since $\phi$ is quasiconformal on $V^1$. Hence the interior of $\mc{K}$ is a Fatou domain containing $a$. So $D=\mc{K}\sm\pa\mc{K}$ is a Jordan domain. 

Now we are going to prove that for any other Fatou domain $\Omega$ of $f$, $\ol{D}\cap\ol{\Omega}=\emptyset$. Since $\pa D\cap\PPP_f=\emptyset$, the closure of $D$ is disjoint from the closure of any components of $f^{-n}(D)$ for all $n\geq 1$. Note that there is an integer $n\geq 0$ such that $f^n(\Omega)$ is periodic. If a Fatou domain $\Omega$ does not lie in any backward orbit of $D$ and $\ol{D}\cap\ol{\Omega}\neq\emptyset$, then $\ol{D}\cap\ol{f^n(\Omega)}\supset f^n(\ol{D}\cap\ol{\Omega})\neq\emptyset$. This implies $\ol{D}$ intersects the closure of some periodic Fatou domain. Hence we only need to check that $\ol{D}\cap\ol{\Omega}=\emptyset$ for each periodic Fatou domain $\Omega$ which is not $D$. 

As in \S \ref{sec:combinatorial}, denote by $\AAA$ the multi-annulus isotopic to $\Lambda$. Let $F$ be the standard form of $f$ corresponding to the multicurve $\Lambda$ and $h$ is the semi-conjugacy from $F$ to $f$. Suppose $A(\beta)\subset\AAA$ is the annulus isotopic to $\beta$ rel $\PPP_f$. Denote by $W_0$ the piece of $\cbar\sm\AAA$ which contains $a$ and $W_1$ the other piece of $\cbar\sm\AAA$ which intersects $\ol{A(\beta)}$. Then the annulus $A$ bounded by $h(W_0)$ and $h(W_1)$ has positive modulus since $\beta$ is a coiling curve by Lemma \ref{lem:key}. 
Let $\Omega$ be a periodic Fatou domain. Then the corresponding attracting periodic point $a_{\Omega}$ does not lie in $\ol{A}$ since $\ol{A}\cap\PPP_f=\emptyset$. So $h^{-1}(a_{\Omega})\notin\ol{\AAA}$. Because $F(\pa \AAA)\subset \pa \AAA$, thus $h^{-1}(\Omega)\cap\ol{\AAA}=\emptyset$. In other words, any periodic Fatou domain of $f$ has no intersection with $\ol{A}$. Therefore $A$ separates $\ol{D}$ from the closure of all other periodic Fatou domains. Hence the closure of a coiled Fatou domain is disjoint from the closure of any other Fatou domain. 
\end{proof}

\subsection{Special coiled Fatou domains}
In this section, we prove that a coiled Fatou domain coiled by a PCF polynomial (see Definition \ref{def:coilingdomain}) which is injective on its Hubbard tree can tune any PCF polynomial. 
\begin{thm}\label{thm:tunable}
Let $g$ be a PCF rational map with a fixed Fatou domain $D$ coiled by a PCF polynomial $Q$ and $d\geq 2$ be the degree of $g$ on $D$. Suppose $P$ is any monic PCF polynomial with degree $d$ and $\PPP_P$ contains the origin. If $Q$ is injective on its Hubbard tree, then the formal tuning $F$ of $g$ with $P$ in $D$ is Thurston equivalent to a rational map $f$.
\end{thm}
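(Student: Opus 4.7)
The plan is to apply Thurston's theorem (Theorem~\ref{thm:thurston}): since $F$ is a PCF branched covering of $\cbar$ whose orbifold is hyperbolic (it inherits the super-attracting cycle of $P$ inside $D$), it suffices to show that $F$ admits no Thurston obstruction. I will argue by contradiction, assuming an obstruction $\G$ exists, and invoke Lemma~\ref{lem:irreducible} to reduce to the case where $\G$ is irreducible with $\lambda(M_\G)\ge 1$.

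First I would set up the geometry. By Theorem~\ref{thm:coiledJordan}, $D$ is a Jordan domain whose closure is disjoint from every other Fatou domain of $g$. This Jordan structure lets me decompose each curve $\gamma\in\G$, up to isotopy rel $\PPP_F$, as lying either in $D$, in $\cbar\sm\overline{D}$, or in an annulus parallel to $\partial D$, giving $\G=\G_{\text{in}}\sqcup\G_{\text{out}}\sqcup\G_\partial$. Via the conjugacy $\psi=\chi\circ\phi$ the restriction $F|_D$ is conjugate to $P$, so the preimages in $D$ of a curve in $\G_{\text{in}}$ correspond to preimages under the polynomial $P$ viewed as a self-map of $\cbar$ (with $\infty$ adjoined to $\PPP_P$). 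Since $P$ is a rational map, it carries no Thurston obstruction, so the in-to-in block of the transition matrix has spectral radius strictly less than $1$. Dually, on $\cbar\sm\overline{D}$ we have $F\equiv g$, and rationality of $g$ bounds the out-to-out block strictly below $1$ as well.

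The main work is to control the coupling between the inside and outside pieces, which arises because $F^{-1}(D)=g^{-1}(D)$ has components outside $D$ that pull curves of $\G_{\text{in}}$ back into $\cbar\sm\overline{D}$ and conversely. Here I would invoke the hypothesis on $Q$. The combinatorial renormalization of $F$ on the component $U$ of $\cbar\sm\alpha$ disjoint from $D$ is unchanged from that of $g$, namely $Q$, because the tuning alters $F$ only inside $D\subset\cbar\sm U$. The injectivity of $Q$ on its Hubbard tree $H_Q$ means $Q\colon H_Q\to H_Q$ is a homeomorphism, and I would use this to show that any cycle of curves in $\G$ mixing inside and outside must, upon iterated pullback via the renormalization $F^p$, produce infinitely many non-isotopic essential curves in $U$ parallel to a single arc of $H_Q$; this contradicts injectivity. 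Combined with the previous paragraph, this caps $\lambda(M_\G)$ strictly below $1$, contradicting $\lambda(M_\G)\ge 1$.

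The main obstacle will be formalizing the translation of Hubbard-tree injectivity of $Q$ into the non-existence of mixed irreducible Thurston obstructions of $F$. Concretely, one must carefully track isotopy classes of essential curves in $U$ under iterated pullback by the renormalization $F^p$, identify $H_Q$ inside $U$ as an invariant combinatorial object, and argue that injectivity of $Q$ on $H_Q$ rules out the Levy-type accumulation that a persistent mixed obstruction would produce. This is likely to require an application of Theorem~\ref{thm:semiconjugacy} to the combinatorial renormalization on $U$, together with the Jordan-domain geometry provided by Theorem~\ref{thm:coiledJordan}.
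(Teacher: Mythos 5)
Your high-level plan (apply Thurston's theorem, reduce to irreducible multicurves via Lemma~\ref{lem:irreducible}, exploit the Hubbard-tree injectivity to rule out mixed obstructions) matches the paper's strategy, and your intuition that the key difficulty is the coupling between inside and outside curves is right. But the central step is a gap, not a sketch.

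First, the decomposition $\G=\G_{\text{in}}\sqcup\G_{\text{out}}\sqcup\G_\partial$ is not available: $\pa D$ lies in the Julia set, not in $\PPP_F$, so a curve $\g\in\G$ can cross $\pa D$ in an essential way that cannot be isotoped away rel $\PPP_F$. The paper avoids this issue entirely by never partitioning curves according to position relative to $D$; instead it picks two specific Jordan curves $\alpha$ and $\beta$ in $\cbar\sm\PPP_F$ (where $\beta$ is peripheral in $D$ around the new post-critical orbit, and $\alpha$ is the coiling curve from Definition~\ref{def:coilingdomain}) and measures the geometric intersection numbers $k(\alpha,\g)$ and $k(\beta,\g)$. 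The case analysis is then by whether these numbers vanish, which is a well-defined isotopy invariant, not a topological classification of curves into three piles.

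Second, and more seriously, your handling of the mixed case is not an argument. You assert that a mixed irreducible obstruction would ``produce infinitely many non-isotopic essential curves in $U$ parallel to a single arc of $H_Q$,'' and then declare this contradicts injectivity; but no mechanism for this production is given, and it is not clear why injectivity on $H_Q$ forbids it. You yourself flag this (``The main obstacle will be formalizing\ldots''), so you know the proof is not there. The paper's actual mechanism is a quantitative counting argument: from the invariant tree $T$ with $F|_{\theta(T)}$ injective and $F(\theta(T))\subset T$ one gets $\sum_i k(T,\delta_i)\le k(T,\g)$ over all essential preimages $\delta_i$ of $\g$, which forces $k(T,\cdot)$ to be constant along any irreducible $\G$ and forces each $\g\in\G$ to have exactly one essential preimage in $\G$. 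Hence $\lambda(M_\G)\ge 1$ would force a Levy cycle $\{\g_i\}$. The contradiction then comes from the fact that $F^{-1}(\alpha)$ carries curves isotopic to \emph{both} $\alpha$ and $\beta$, so pulling back along a degree-one map $\g_i'\to\g_{i+1}$ gives $k(\beta,\g_i)+k(\alpha,\g_i)\le k(\alpha,\g_{i+1})$, which is impossible around a cycle when all $k(\alpha,\cdot)$ and $k(\beta,\cdot)$ are positive. This two-step interplay between the tree bound and the $\alpha,\beta$ intersection growth is the whole proof of the mixed case, and it is exactly what your proposal leaves unspecified. Invoking Theorem~\ref{thm:semiconjugacy} or Theorem~\ref{thm:coiledJordan} here, as you suggest, is not how the paper closes the gap and would not obviously do so.
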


Refer to \cite[Definition 4.1]{orsay} for the definition of Hubbard trees.

\begin{proof}
By the definition of tuning, $\PPP_F=(\phi^{-1}\circ\chi^{-1}(\PPP_P))\cup\PPP_g$. Without loss of generality, we may assume that $\#(\PPP_P\sm\{\infty\})\geq 2$. Let $\beta\subset D\sm\PPP_F$ be a Jordan curve such that the annular component of $D\sm\beta$ is disjoint from $\PPP_F$. Then $\beta$ is an essential curve and $F^{-1}(\beta)$ contains a unique essential curve, which is isotopic to $\beta$ rel $\PPP_F$.

By the definition of coiled Fatou domain coiled by the polynomial $Q$, there is an essential curve $\alpha\subset\cbar\sm\PPP_F$ disjoint from $\beta$, such that $F^{-1}(\alpha)$ contains at least two essential curves, one of which is isotopic to $\alpha$ rel $\PPP_F$ and another is isotopic to $\beta$ rel $\PPP_F$. Denote by $U$ the component of $\cbar\sm\alpha$ disjoint from $\beta$.

Since $Q$ is injective on its Hubbard tree and is the combinatorial renormalization of $g$ on $U$, there is a tree $T\subset U$ containing $\PPP_F\cap U$ and a homeomorphism $\theta$ of $\cbar$ isotopic to the identity relative to a neighborhood of $\PPP_F$, such that $F$ is injective on $\theta(T)$ and $F(\theta(T))\subset T$.

By Lemma \ref{lem:irreducible}, we only need to prove that for any irreducible multicurve $\G$ of $F$, its leading eigenvalue $\lambda(M_{\G})<1$. For each curve $\g\in\cbar\sm\PPP_F$, denote
$$
k(\alpha, \g)=\min_{\g'\sim\g}\#(\alpha\cap\g'),\,\text{ and }\,k(\beta,\g)=\min_{\g'\sim\g}\#(\beta\cap\g'),
$$
where the minimum is taken over all the curves $\g'$ isotopic to $\g$ rel $\PPP_F$. There are following three cases: 

{\bf Case 1:} If there exists a curve $\g\in\G$ such that $k(\beta,\g)=0$, then for any curve $\delta$ in $F^{-1}(\g)$, $k(\beta,\delta)=0$ since $F^{-1}(\beta)$ contains a curve isotopic to $\beta$ rel $\PPP_F$.
By the irreducibility, $k(\beta,\g)=0$ for all $\g\in\G$. Thus the transition matrix of $\G$ under $F$ is equal to the transition matrix of $\G$ under $g$ or $Q$. Hence $\lambda(M_{\G})<1$.

\vskip 0.24cm
{\bf Case 2:} If there exists a curve $\g\in\G$ such that $k(\alpha,\g)=0$, then $k(\beta,\delta)=0$ for any curve $\delta$ contained in $F^{-1}(\g)$ since $F^{-1}(\alpha)$ contains a curve isotopic to $\beta$ rel $\PPP_F$. So we also have $\lambda(M_{\G})<1$ from the above discussion.
\vskip 0.24cm

{\bf Case 3:} If for each curve $\g\in\G$, both $k(\beta,\g)$ and $k(\alpha,\g)$ are non-zero. Denote
$$
k(T,\g)=\min_{\g'\sim\g}\#(T\cap\g'),
$$
where the minimum is taken over all the curves $\g'$ isotopic to $\g$ rel $\PPP_F$. Then $k(T,\g)\neq 0$ for all $\g\in\G$. For each $\g\in\G$, assume $\#(T\cap\g)=k(T,\g)$. Let $\delta_i$ ($1\le i\le n$) be all the curves in $F^{-1}(\g)$. Since $F$ is injective on $\theta(T)$ and $F(\theta(T))\subset T$, we obtain
$$
\sum_{i=1}^n k(T,\delta_i)\le\sum_{i=1}^n\#(\theta(T)\cap\delta_i)\le\#(T\cap\g)=k(T,\g).
$$
Combining this inequality with the irreducibility of $\G$, we obtain that $k(T,\g)$ is a constant for $\g\in\G$, and for any $\g\in\G$, $F^{-1}(\g)$ contains exactly one curve isotopic to a curve in $\G$ rel $\PPP_F$. If $\lambda(M_{\G})\ge 1$, then $\G$ contains a Levy cycle $\{\g_i\}$ ($1\le i\le p$) in $\G$, i.e., $F^{-1}(\g_{i+1})$ contains a curve $\g'_i$ isotopic to $\g_i$ rel $\PPP_F$ for $1\le i\le p$ (set $\g_{p+1}=\g_1$), and $\deg F|_{\g_i'}=1$.

Now assume $\#(\alpha\cap\g_{i+1})=k(\alpha,\g_{i+1})$. Since $\deg F|_{\g_i'}=1$, we have
$$
k(\beta,\g_i)+k(\alpha,\g_i)\le\#(F^{-1}(\alpha)\cap\g'_i)\le\#(\alpha\cap\g_{i+1})=k(\alpha,\g_{i+1}).
$$
This is impossible since $\{\g_i\}$ is a cycle and both $k(\beta,\g_i)$ and $k(\alpha,\g_i)$ are non-zero for all $1\le i\le p$. Thus $\lambda(M_{\G})<1$. So the formal tuning $F$ of $g$ with $P$ has no Thurston obstructions. By Theorem \ref{thm:thurston}, $F$ is Thurston equivalent to a rational map $f$.
\end{proof}

\subsection{The construction of special coiled Fatou domains}
Now we use disc-annulus surgery to construct a PCF rational map satisfying the conditions of Theorem \ref{thm:tunable}. We begin with a PCF rational map $R$ with a fixed Fatou domain $U$ such that $R^{-1}(U)$ has another component $V\neq U$. Let $Q$ be a monic PCF polynomial with $\deg Q=\deg R|_U$ such that $\#\PPP_Q\ge 3$, $\PPP_Q$ contains the origin and $Q$ is injective on its Hubbard tree $T$. Recall that the formal tuning of $R$ with $Q$ in $U$ is defined as
$$
G_0=
\begin{cases}
R & \text{on }\cbar\sm U, \\
\phi^{-1}\circ\chi^{-1}\circ Q\circ\chi\circ\phi & \text{on }U,
\end{cases}
$$
where $\phi$ is a B\"{o}ttcher coordinate of $U$ and $\chi(z)=|z|z/(1-|z|)$.

Pick a Jordan curve $\alpha\subset U$ separating $\phi^{-1}\circ\chi^{-1}(T)$ from $\partial U$. Then $R^{-1}(\alpha)$ contains a unique curve $\alpha_0$ in $U$, which also separats $\phi^{-1}\circ\chi^{-1}(T)$ from $\partial U$, and a unique curve $\beta_0$ in $V$, which separates the unique pre-fixed point $c\in V$ from $\partial V$. Pick a Jordan curve $\beta_1\subset V$ disjoint from $\beta_0$ and separating the point $c$ from $\beta_0$. Denote by
\begin{itemize}
\item $U(\alpha)$: the domain bounded by $\alpha$ in $U$,
\item $V(\beta_0)$: the domain bounded by $\beta_0$ in $V$,
\item $V(\beta_1)$: the domain bounded by $\beta_1$ in $V(\beta_0)$, and
\item $A(\beta_0,\beta_1)$: the annulus bounded by $\beta_0$ and $\beta_1$ in $V(\beta_0)$.
\end{itemize}
Define a branched covering $G:\,\cbar\to\cbar$ such that
\begin{itemize}
\item[(a)] (tuning) $G=G_0$ on $\cbar\sm V(\beta_0)$,
\item[(b)] (disk-annulus surgery) $G:\, A(\beta_0,\beta_1)\to U(\alpha)$ is a branched covering whose critical values are contained in $\phi^{-1}\circ\chi^{-1}(\PPP_Q)$,
\item[(c)] (new fixed critical point) $G:\, V(\beta_1)\to\cbar\sm\overline{U(\alpha)}$ is a branched covering with exactly one critical point at $c\in V$ and $G(c)=c$.
\end{itemize}
Then $G$ is a PCF branched covering with $\PPP_G=\phi^{-1}\circ\chi^{-1}(\PPP_Q)\cup\PPP_{R}\cup\{c\}$. Note that $\alpha$ is an essential curve in $\cbar\sm\PPP_G$. $G^{-1}(\alpha)$ contains exactly one essential curve $\alpha_0$ isotopic to $\alpha$ rel $\PPP_G$. From $G^{-1}(\alpha)=R^{-1}(\alpha)\cup\beta_1$, we know that $\deg G=\deg R+\deg G|_{\beta_1}$.

\begin{figure}[htbp]
\centering
\includegraphics[width=10cm]{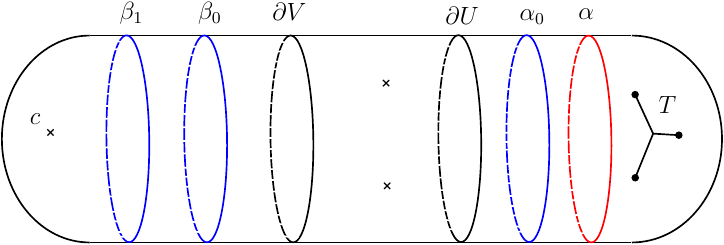}
\caption{Definition of $G$. }
\label{fig: G}
\end{figure}

\begin{lem}\label{lem:existence}
Assume that $G_0$ has no Thurston obstructions. Then $G$ is Thurston equivalent to a rational map $g$, which satisfies the conditions in Theorem \ref{thm:tunable}.
\end{lem}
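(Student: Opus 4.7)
The plan is to follow the strategy of Theorem \ref{thm:tunable}: first show that $G$ has no Thurston obstructions, then apply Thurston's theorem to obtain the rational map $g$, and finally verify that $g$ has the coiled Fatou domain structure required by Theorem \ref{thm:tunable}. The essential combinatorial data of $G$ are the essential curve $\alpha\subset U(\alpha)$ with $\alpha_0\subset G^{-1}(\alpha)$ isotopic to $\alpha$ rel $\PPP_G$ (self-similarity from the tuning on $U$); the curve $\beta_1\subset G^{-1}(\alpha)$, which bounds $V(\beta_1)$ and is peripheral rel $\PPP_G$ around the new fixed critical point $c$; and the tree $T:=\phi^{-1}\circ\chi^{-1}(T_Q)\subset U(\alpha)$, where $T_Q$ is the Hubbard tree of $Q$. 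Since $G|_U$ is the formal tuning by $Q$ and $Q$ is injective on $T_Q$, there is a homeomorphism $\theta$ of $\cbar$ isotopic to the identity rel a neighborhood of $\PPP_G$ such that $G$ is injective on $\theta(T)$ and $G(\theta(T))\subset T$. Write $\beta$ for a small peripheral curve around $c$, so $\beta\sim\beta_1$ rel $\PPP_G$, and observe that construction (c) provides a component of $G^{-1}(\beta)$ isotopic to $\beta$.

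To rule out Thurston obstructions, I take an arbitrary irreducible multicurve $\G$ of $(G,\PPP_G)$ and mirror the three-case analysis of Theorem \ref{thm:tunable} using the intersection numbers $k(\alpha,\g)$ and $k(\beta,\g)$. If $k(\beta,\g)=0$ for some $\g\in\G$, then the pre-stability of $\beta$ combined with the irreducibility of $\G$ propagates this to every curve of $\G$. Any essential preimage of a curve in $\G$ lying in $V(\beta_1)$ would be peripheral around $c$, hence non-essential; and on $\cbar\sm V(\beta_1)$ the map $G$ differs from $G_0$ only through the annulus-surgery on $A(\beta_0,\beta_1)$, which a careful bookkeeping shows yields a transition matrix dominated by the $G_0$-transition matrix of $\G$ regarded as a multicurve rel $\PPP_{G_0}$. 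Since $G_0$ has no Thurston obstructions, one has $\lambda(M_{\G})<1$. The case $k(\alpha,\g)=0$ is analogous: curves in $\G$ live on one side of $\alpha$ up to isotopy, reducing the transition matrix either to one controlled by $Q$ (on $U(\alpha)$, since $G|_U$ is conjugate to $Q$) or to one controlled by $R$ (on $\cbar\sm\ol{U(\alpha)}$), neither of which supports an obstruction. In the remaining case, both $k(\alpha,\g)>0$ and $k(\beta,\g)>0$ on $\G$; the injectivity of $G$ on $\theta(T)$ together with $G(\theta(T))\subset T$ forces $k(T,\cdot)$ to be constant on $\G$ and each $\g$ to have a unique essential preimage in $\G$, and an assumed Levy cycle $\{\g_i\}$ then yields, via $\alpha_0,\beta_1\subset G^{-1}(\alpha)$ and the degree-one property of Levy cycles, the cyclic inequality $k(\beta,\g_i)+k(\alpha,\g_i)\le k(\alpha,\g_{i+1})$, which is impossible with both $k(\beta,\g_i)$ and $k(\alpha,\g_i)$ positive.

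By Theorem \ref{thm:thurston}, $G$ is Thurston equivalent to a PCF rational map $g$ unique up to holomorphic conjugation. The Thurston equivalence transports $V(\beta_1)$ to a topological disc around a super-attracting fixed critical point $c_g$ of $g$, whose interior is a fixed Fatou domain $D$. The relations $\alpha_0,\beta_1\subset G^{-1}(\alpha)$ with $\beta_1\sim\beta$ transport to $g$: the image $\alpha_g$ of $\alpha$ is essential in $\cbar\sm\PPP_g$, and $g^{-1}(\alpha_g)$ contains an isotopic curve together with a curve peripheral around $c_g$, realizing Definition \ref{def:coilingdomain}. Moreover, by the uniqueness clause in Theorem \ref{thm:combinatorial}, the combinatorial renormalization of $g$ on the component of $\cbar\sm\alpha_g$ disjoint from $c_g$ is holomorphically conjugate to the rational map Thurston equivalent to $G|_{U(\alpha)}$; since the latter is the formal tuning of $R$ on $U$ with $Q$, this renormalization is precisely $Q$. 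Hence $D$ is coiled by $Q$ and, together with the hypothesis that $Q$ is injective on its Hubbard tree, $g$ meets the requirements of Theorem \ref{thm:tunable}. The main obstacle is Case 3 of the obstruction analysis, where the tree argument must rule out Levy cycles compatible with the two coiling contributions $\alpha_0\sim\alpha$ and $\beta_1\sim\beta$ in $G^{-1}(\alpha)$; this is handled exactly as in Theorem \ref{thm:tunable} via the injectivity of $Q$ on $T_Q$.
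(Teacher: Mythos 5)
Your overall strategy is right, and your Case 3 (both $k(\alpha,\g)$ and $k(\beta,\g)$ positive) coincides with the paper's tree-and-Levy-cycle argument. However, your case split is genuinely different from the paper's, and your Case~1 has a real gap. The paper splits only on whether $k(\alpha,\g)=0$: in the $k(\alpha,\g)=0$ case all curves of $\G$ lie on one side of $\alpha$ (by irreducibility) and the $G$- and $G_0$-transition matrices literally agree; in the $k(\alpha,\g)\neq 0$ case it applies the Hubbard-tree argument \emph{without any hypothesis on $k(\beta,\g)$}, getting a Levy cycle $\{\g_i\}$ and the inequality $\#((\beta_0\cup\beta_1)\cap\g'_i)+k(\alpha,\g_i)\le k(\alpha,\g_{i+1})$. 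Summing around the cycle forces $\#((\beta_0\cup\beta_1)\cap\g'_i)=0$, so each $\g'_i$ lies in $\cbar\sm\overline{V(\beta_0)}$ where $G=G_0$, turning $\{\g_i\}$ into a Levy cycle of $G_0$ --- contradiction. In your scheme, the sub-case $k(\alpha,\g)>0$ but $k(\beta,\g)=0$ is assigned to Case~1, not Case~3, and that is where the gap lies.

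Concretely, your Case~1 assertion that, when every $\g\in\G$ can be isotoped off a peripheral curve $\beta$ around $c$, a ``careful bookkeeping'' shows $M_\G^G$ is dominated by the $G_0$-transition matrix, is not justified and I do not believe it holds in the generality claimed. A curve $\g$ with $k(\beta,\g)=0$ may still satisfy $k(\alpha,\g)>0$, hence $\g$ meets $U(\alpha)$ and $\cbar\sm\overline{U(\alpha)}$; its $G$-preimages then run through the surgery region, with arcs in $A(\beta_0,\beta_1)$ (pulled back from $\g\cap U(\alpha)$ by the surgery covering) and arcs in $V(\beta_1)$ (pulled back from $\g\cap(\cbar\sm\overline{U(\alpha)})$ by the new branched map). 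Under $G_0$ the corresponding arcs in $V(\beta_0)$ come instead from $R^{-1}(\g\cap U(\alpha))$, with different multiplicity and different connectivity; the degrees of $G|_{A(\beta_0,\beta_1)}$ and $G_0|_{V(\beta_0)}$ onto $U(\alpha)$ are not equal (indeed $\deg G|_{A(\beta_0,\beta_1)}=\deg R|_{\beta_0}+\deg G|_{\beta_1}>\deg R|_{V(\beta_0)}$). There is no evident entrywise domination $M_\G^G\le M_\G^{G_0}$, and you give none. The remedy is to do what the paper does: treat all of $k(\alpha,\g)\neq 0$ by the tree argument, and let the resulting cyclic inequality in $\#((\beta_0\cup\beta_1)\cap\g'_i)$ (rather than in $k(\beta,\g_i)$) kill the Levy cycle by reducing it to one for $G_0$. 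Your Case~2 and the concluding identification of the coiled Fatou domain (via the uniqueness clause of Theorem~\ref{thm:combinatorial}) are essentially correct, though ``controlled by $R$'' for curves outside $\overline{U(\alpha)}$ should read ``controlled by $G_0$''.
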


\begin{proof}
By Lemma \ref{lem:irreducible}, we only need to prove that for any irreducible multicurve $\G$ of $G$, its leading eigenvalue $\lambda(M_{\G})<1$. Assume $k(\alpha,\g)=\#(\alpha\cap\g)$ for every $\g\in\G$. If there is a curve $\g\in\G$ such that $k(\alpha,\g)=0$, then $k(\alpha,\delta)=0$ for any curve $\delta$ in $G^{-1}(\g)$ since the curve $\alpha_0$ in $G^{-1}(\alpha)$ is isotopic to $\alpha$ rel $\PPP_G$. Note that $\G$ is irreducible. 
Thus $k(\alpha,\g)=0$ for each $\g\in\G$. Therefore either each curve in $\G$ is contained in $U(\alpha)$, or each curve in $\G$ is disjoint from $U(\alpha)$. In the former case, the transition matrix of $\G$ under $G$ is equal to the transition matrix of $\G$ under $G_0$. Hence $\lambda(M_{\G})<1$ by the assumption that $G_0$ has no Thurston obstructions. In the latter case, for each $\g\in\G$, $G^{-1}(\g)$ is disjoint from $A(\beta_0,\beta_1)$. Thus the transition matrix of $\G$ under $G$ is also equal to the transition matrix of $\G$ under $G_0$, which also implies $\lambda(M_{\G})<1$.

Now we assume $k(\alpha,\g)\neq 0$ for each $\g\in\G$. Using the same argument as in the proof of Theorem \ref{thm:tunable}, we know that if $\lambda(M_{\G})\ge 1$, then $\G$ contains a Levy cycle $\{\g_i\}$ since $Q$ is injective on its Hubbard tree. Moreover,
$$
\#((\beta_0\cup\beta_1)\cap\g'_i)+k(\alpha,\g_i)\le\#(G^{-1}(\alpha)\cap\g'_i)\le\#(\alpha\cap\g_{i+1})=k(\alpha,\g_{i+1}).
$$
Thus $\#((\beta_0\cup\beta_1)\cap\g'_i)=0$ and hence $G|_{\g'_i}=G_0|_{\g'_i}$. This is a contradiction since $G_0$ has no Thurston obstructions. In summary, we have $\lambda(M_{\G})<1$.

By Theorem \ref{thm:thurston}, $G$ is Thurston equivalent to a PCF rational map $g$ by a pair of orientation-preserving homeomorphisms $(\phi_0,\phi_1)$ of $\cbar$. Let $D$ be the fixed Fatou domain of $g$ containing the fixed critical point $\phi_0(c)$. Then $D$ is coiled by the polynomial $Q$. Thus the rational map $g$ satisfies the conditions in Theorem \ref{thm:tunable}.
\end{proof}
\begin{cor}\label{cor:example}
Applying Theorem \ref{thm:tunable} to the rational map $g$ in Lemma \ref{lem:existence}, the rational map $f$ in Theorem \ref{thm:tunable} has a completely stable multicurve containing coiling curves but no Cantor multicurves.
\end{cor}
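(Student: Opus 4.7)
The plan is to exhibit $\G=\{\alpha,\beta\}$, where $\alpha$ and $\beta$ are the essential and peripheral curves (respectively) appearing in the coiled Fatou domain structure of $g$, as the required completely stable multicurve of $f$. Since the coiled Fatou domain $D$ of $g$ is coiled by the polynomial $Q$ (with $\#\PPP_Q\ge 3$) and Theorem~\ref{thm:tunable} allows tuning $g$ with $P$ satisfying $\#(\PPP_P\sm\{\infty\})\ge 2$, the curve $\beta$ (peripheral around the fixed critical point $a\in D$ in $g$) becomes essential in $\cbar\sm\PPP_f$ after tuning, as at least two post-critical points of $f$ now lie on the $a$-side of $\beta$.

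The first step is to verify that $\G$ is completely stable. By the coiled Fatou domain hypothesis, $f^{-1}(\alpha)$ contains an essential component isotopic to $\alpha$ together with $k\ge 1$ essential components isotopic to $\beta$. For $f^{-1}(\beta)$, the polynomial-like dynamics of $f|_{D_f}$, conjugate to $P$, produces exactly one essential component in $D_f$ isotopic to $\beta$. The remaining components of $f^{-1}(\beta)$ lie in the preimages of $D_f$ outside $D_f$, which in the Lemma~\ref{lem:existence} construction correspond to the auxiliary pre-pre-periodic Fatou domains $R^{-1}(V)\sm V$ of $R$; these contain no post-critical points of $R$, and neither the disc-annulus surgery nor the tuning introduces post-critical points of $f$ into them, so any Jordan curve contained in such a domain is non-essential in $\cbar\sm\PPP_f$. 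Hence $\G$ is completely stable.

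The second step is to iterate the isotopy-counting recurrence. Denoting $a_n=\kappa_n(\alpha,\G)$ and $b_n=\kappa_n(\beta,\G)$, the analysis of $f^{-1}(\alpha)$ and $f^{-1}(\beta)$ above yields $a_{n+1}=a_n$ and $b_{n+1}=ka_n+b_n$; the crucial vanishing of the $\beta$-to-$\alpha$ transition uses the topological observation that no essential preimage of $\beta$ is isotopic to $\alpha$, since $\alpha$ sits inside the ``$U$-region'' while the essential preimages of $\beta$ sit inside $D_f$. Starting from $a_0=b_0=1$ we get $a_n=1$ and $b_n=1+kn\to\infty$, so $\beta$ is coiling in $\G$ and $\alpha$ is not. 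Consequently $\G$ itself is not Cantor; the sub-multicurve $\{\alpha\}$ fails to be stable (its $\sim\beta$ preimages are essential but not isotopic to $\alpha$) and so is not Cantor; and the sub-multicurve $\{\beta\}$ is stable with $\kappa_n(\beta,\{\beta\})=1$, so $\beta$ is not coiling in it and it is not Cantor either. Therefore $\G$ is a completely stable multicurve of $f$ containing the coiling curve $\beta$ but no Cantor sub-multicurve.

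The main obstacle, which already surfaces in the stability step, is verifying that preimages of $\beta$ outside $D_f$ are non-essential. This depends on the topological arrangement set up by Lemma~\ref{lem:existence}: the auxiliary Fatou domains $R^{-1}(V)\sm V$ are disjoint from $\PPP_R$, and both the disc-annulus surgery (confined to $V(\beta_0)$) and the tuning of $g$ with $P$ (confined to $D$) leave them untouched, so they remain disjoint from $\PPP_f$; any simple closed curve contained in such a simply connected region disjoint from $\PPP_f$ is null-homotopic in $\cbar\sm\PPP_f$, as required.
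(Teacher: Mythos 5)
Your proof is correct and follows essentially the same approach as the paper's: you take the two‐curve multicurve $\G=\{\alpha,\beta\}$, show $\kappa_n(\alpha,\G)$ stays equal to $1$ while $\kappa_n(\beta,\G)$ grows linearly (the paper computes $\kappa_n(\beta_0,\G)=2n+1$, your $b_n=1+kn$ with $k=2$), and then rule out Cantor sub-multicurves by observing that $\alpha$ is never coiling and $\{\beta\}$ alone gives $\kappa_n(\beta,\{\beta\})=1$. You are in fact slightly more explicit than the paper in justifying that the non-essential preimages stay null-homotopic under further pullback (so the recurrence is exact) and in noting that $\{\alpha\}$ fails stability, but these are refinements of the same argument rather than a different route.
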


\begin{proof}
We only need to show that $F$ in Theorem \ref{thm:tunable} has a multicurve containing coiling curves but no Cantor multicurves. 

Note that $\alpha_0$ is the unique curve in $g^{-1}(\alpha)$ isotopic to $\alpha$ rel $\PPP_g$ and $\beta_i$ ($i=0,1$) be all the curves in $g^{-1}(\alpha)$ which are peripheral around the fixed point $a\in D$ rel $\PPP_g$. After tuning, $\alpha_0$ is still essential in $\cbar\sm\PPP_F$, and $\beta_i$ $(i=0,1)$ are essential and isotopic to each other in $\cbar\sm\PPP_F$. All the other else curves in $g^{-1}(\alpha)=F^{-1}(\alpha)$ are still non-essential in $\cbar\sm\PPP_F$. Thus $\G:=\{\alpha,\beta_0\}$ is a completely stable multicurve of $F$. The set $F^{-1}(\beta_0)$ contains exactly one essential curve, which is isotopic to $\beta_0$ rel $\PPP_F$. The set $F^{-1}(\alpha_0)$ contains one curve isotopic to $\alpha_0$ rel $\PPP_F$ and two curves isotopic to $\beta_0$ rel $\PPP_F$. Thus $\kappa_n(\beta_0,\G)=2n+1\to\infty$ as $n\to\infty$ but $\kappa_n(\alpha_0,\G)=1$ for all $n\ge 1$. On the other hand, for $\G_0:=\{\alpha_0\}, \G_1:=\{\beta_0\}$, $\kappa_n(\alpha_0,\G_0)=\kappa_n(\beta_0,\G_1)=1$ for all $n\geq 1$. Hence $\G$ contains no Cantor multicurves. Thus $\G$ is a completely stable multicurve of $F$ containing coiling curves but no Cantor multicurves.
\end{proof}

\subsection{Examples of rational maps with the special coiled Fatou domains}\label{sec:specific}
To get a rational map which contains coiling curves but no Cantor multicurves, we need to find branched covering maps $G$ such that they have no Thurston obstructions. In this part, we provide some explicit examples of rational maps which are Thurston equivalent to $G$ and then automatically they have no Thurston obstructions. Then the resulting maps satisfy the conditions of Theorem \ref{thm:tunable}.

\vskip 0.24cm
\noindent
{\bf Example 1}. We consider
$$
R(z)=-\frac{27}{4}z^2(z-1)  \text{\quad and\quad} Q(z)=z^2-1.
$$
The cubic polynomial $R$ has two critical points $0$, $2/3$ and one critical orbit $2/3\mapsto 1\mapsto 0\mapsto 0$. Then $R$ has a unique fixed Fatou domain $U$ containing $0$ and $R^{-1}(U)=U\cup V$, where $V$ is the Fatou domain containing $1$. Since $R$ is a real polynomial and has a Fatou domain containing $2/3$ (which is different from $U$ and $V$), we have $\overline{U}\cap \overline{V}=\emptyset$ and hence all bounded Fatou domains of $R$ have pairwise disjoint closures. By \cite{shen2021primitive}, we obtain a cubic polynomial $g_0$ which is the tuning of $R$ with $Q$ in $U$ and
$$
g_0(z)=\frac{1+\nu}{\nu}z^2\left(z-\mu\right)+\nu \text{\quad with } \mu=\frac{1+\nu+\nu^2}{1+\nu},
$$
where $\nu=-0.1219358974084060...$ is chosen such that $g_0$ has two finite critical points $0$, $c_0=2\mu/3$ and one critical orbit $c_0\mapsto 1\mapsto 0\mapsto \nu\mapsto 0$. See Figure \ref{fig:exam1} for the Julia sets of $R$ and $g_0$.

\begin{figure}[htbp]
\setlength{\unitlength}{1mm}
\centering
\includegraphics[width=13cm]{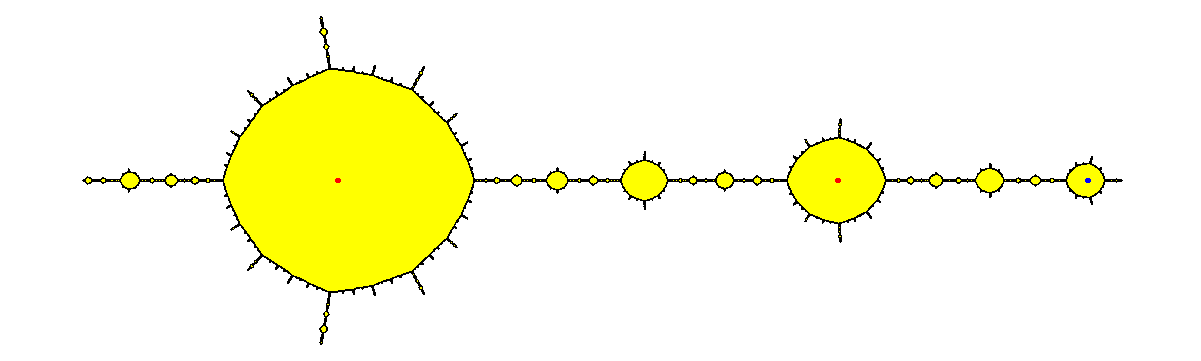}
\includegraphics[width=13cm]{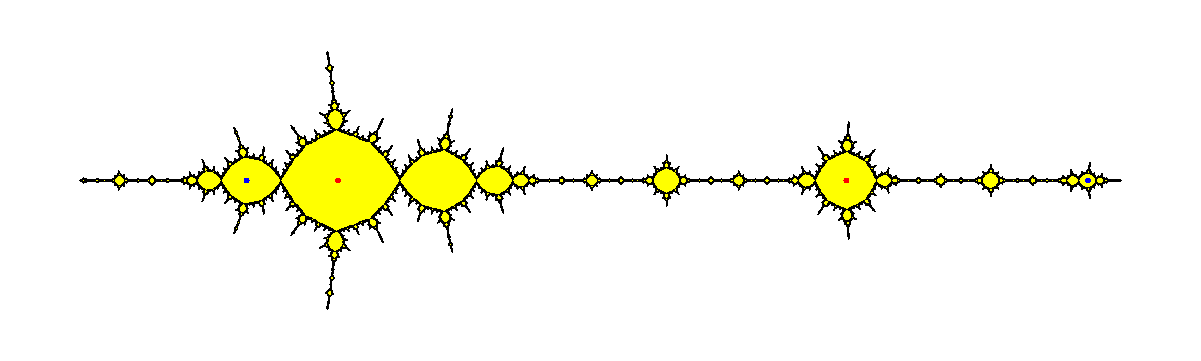}
\caption{Julia sets $\JJJ_R$, $\JJJ_{g_0}$ in Example 1. The critical orbits are marked. The ranges of these pictures are all $[-0.5,1.3]\times [-0.25,0.25]$.}
\label{fig:exam1}
\end{figure}

By performing a disk-annulus surgery and applying Lemma \ref{lem:existence}, we obtain a rational map $g$ with a coiling Fatou domain. The map $g$ has the form
$$
g(z)=-\frac{d(z-a)(z-b)(z-c)^3}{bc^3(z^2-q z+d)} \text{\quad with } q=\frac{d(3ab+ac+bc)}{abc},
$$
where
\begin{equation*}
\begin{split}
a&=- 0.1203582660251960...,  \quad b=0.1369645575161714..., \\
c&=\hskip0.33cm 0.9975907956140505...,  \quad d=1.0001239392656081...
\end{split}
\end{equation*}
are chosen such that $g$ has $8$ critical points: $\infty$ and $c$ are double, $0$, $1$, $c_1$ and $c_2$ are simple; and $4$ critical orbits:
$\infty\mapsto \infty$,
$c_2\mapsto 1\mapsto 1$,
$c\mapsto 0\mapsto a\mapsto 0$ and
$c_1\mapsto a\mapsto 0\mapsto a$.
See Figure \ref{fig:exam1-g} for the Julia set of $g$, where the Fatou domain of $g$ containing $1$ (colored green) is a coiled Fatou domain of $g$.

\begin{figure}[htbp]
\includegraphics[width=13cm]{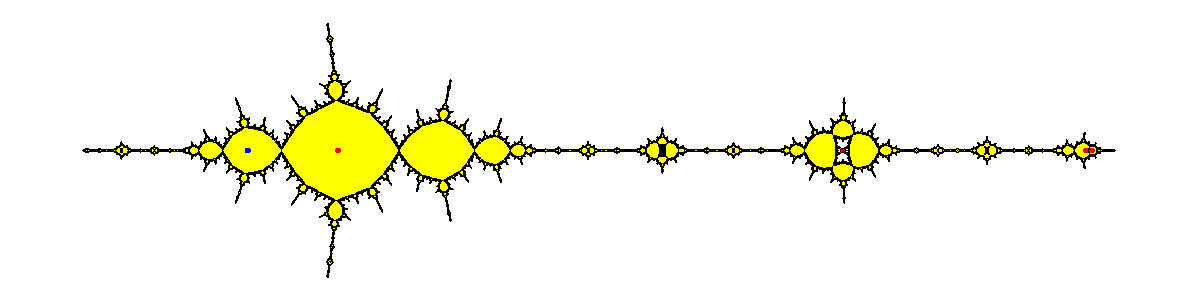}
\setlength{\unitlength}{1mm}
\setlength{\fboxsep}{0pt}
\centering
\fbox{\includegraphics[width=4.5cm]{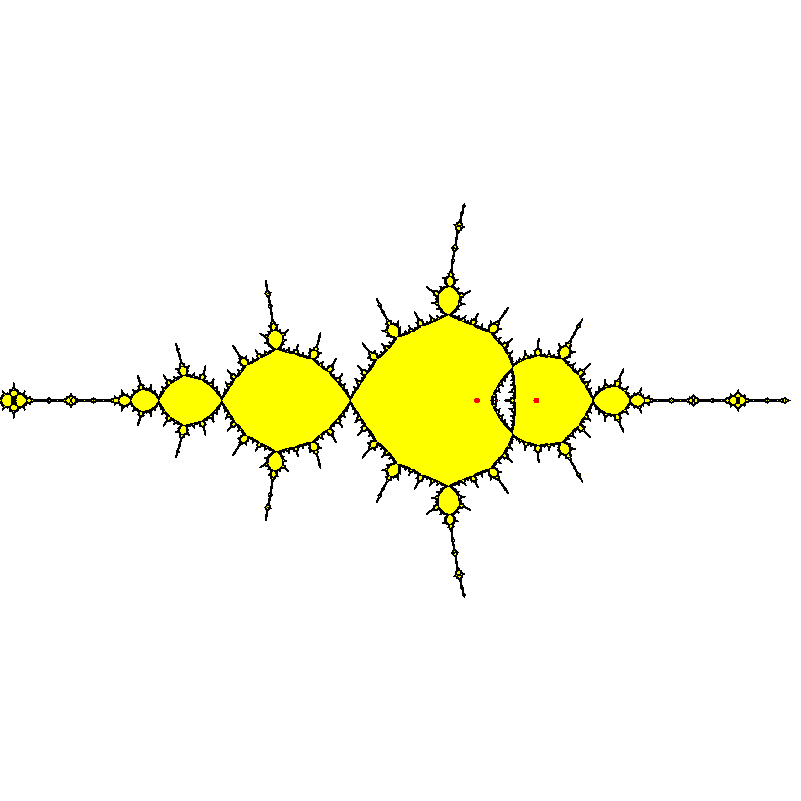}}\quad
\fbox{\includegraphics[width=4.5cm]{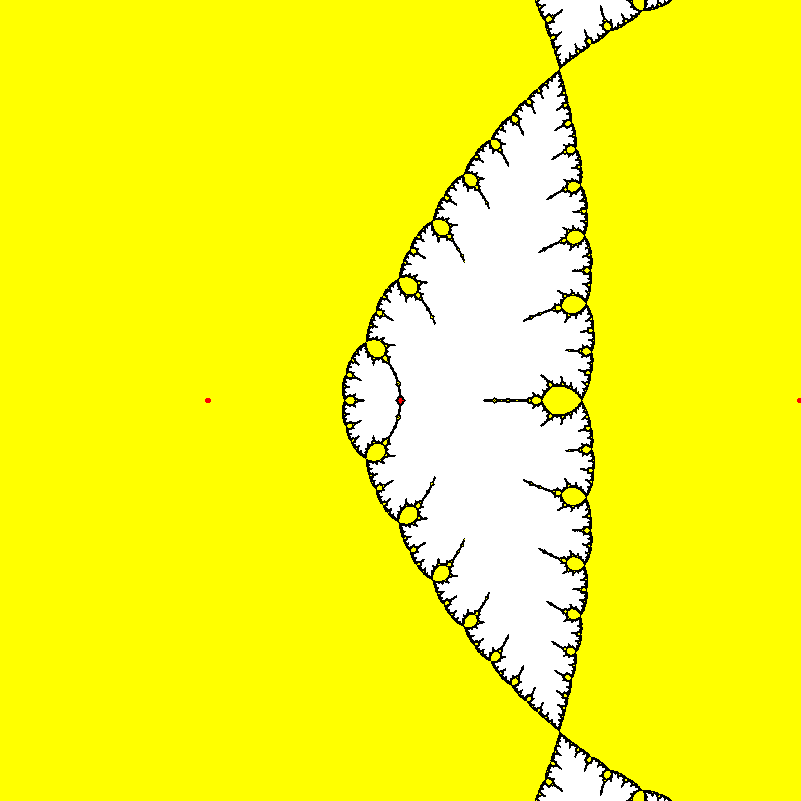}}\quad
\fbox{\includegraphics[width=4.5cm]{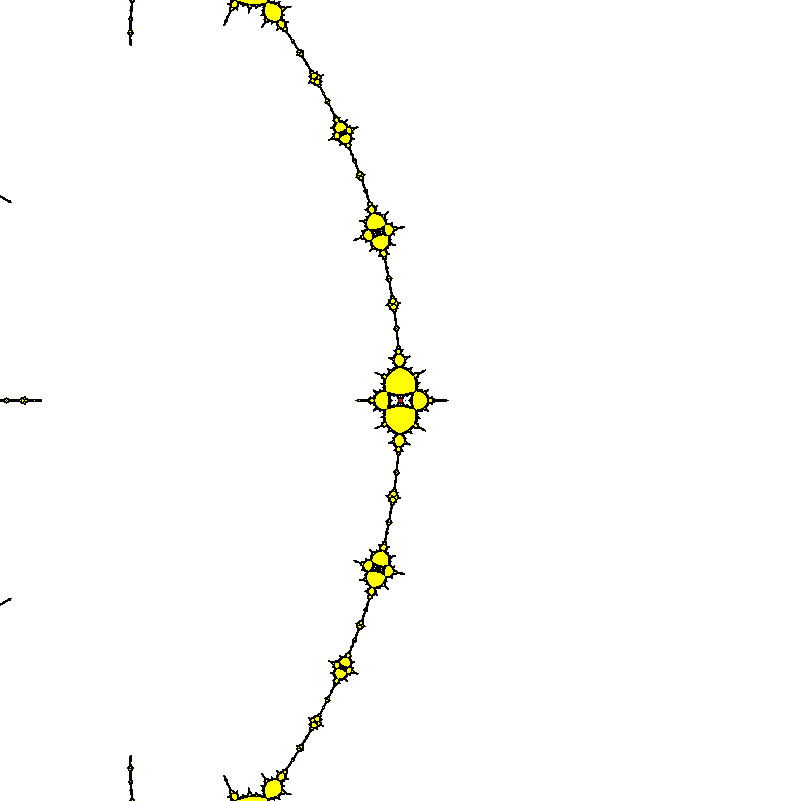}} \\ \medskip
\fbox{\includegraphics[width=4.5cm]{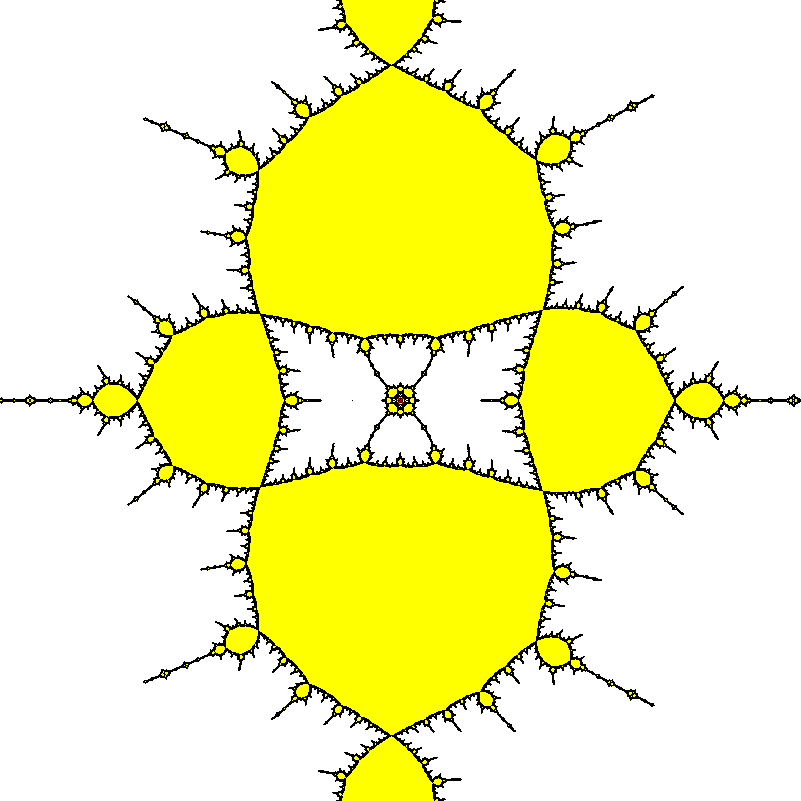}}\quad
\fbox{\includegraphics[width=4.5cm]{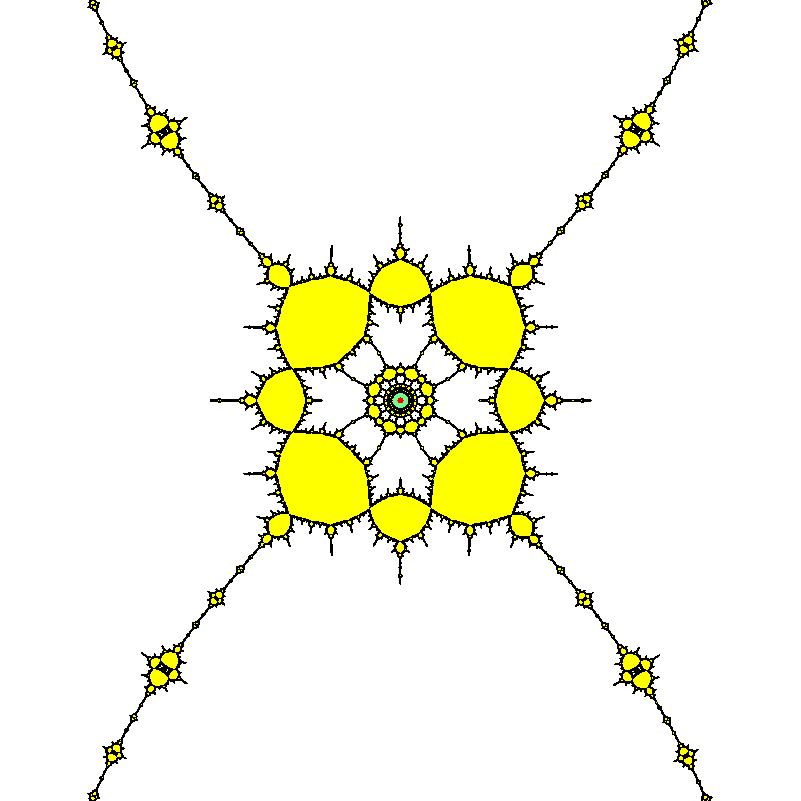}}\quad
\fbox{\includegraphics[width=4.5cm]{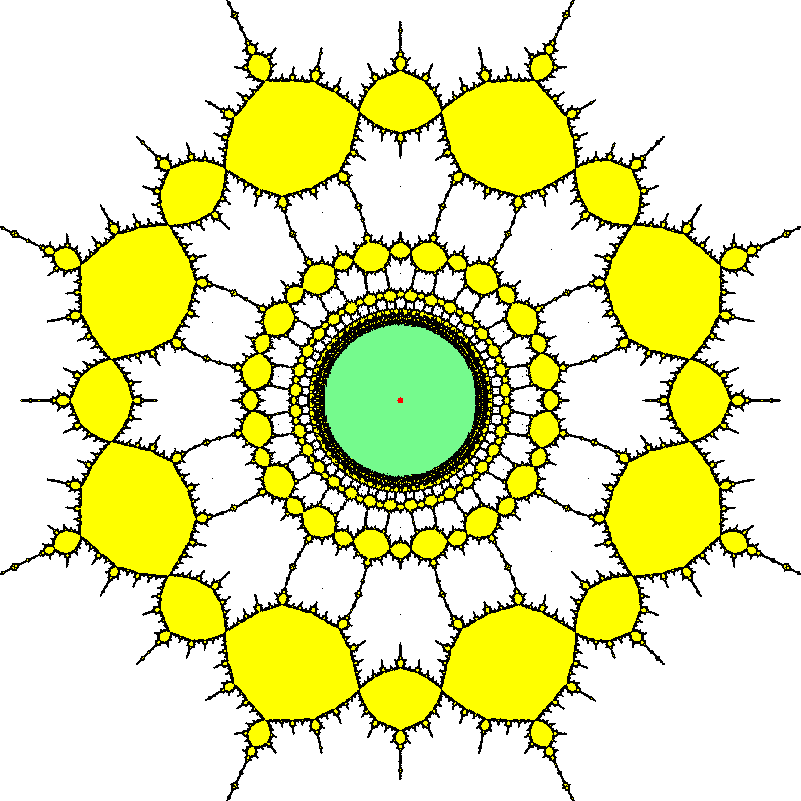}}
\caption{The Julia set $\JJJ_g$ and its successive zoom in near the fixed point $1$ of $g$ in Example 1. The widths of these pictures are $10^{-k}$, where $k=1,2,\cdots, 6$ respectively.}
\label{fig:exam1-g}
\end{figure}

\vskip 0.24cm
\noindent
{\bf Example 2}.
We consider
$$
R(z)=\frac{2z^2}{z^4+1}\text{\quad and\quad}
Q(z)=z^2-1.
$$
The rational map $R$ has $6$ simple critical points $\infty$, $0$, $\pm 1$, $\pm i$ and two critical orbits $\infty\mapsto 0$ and $\pm i\mapsto -1\mapsto 1\mapsto 1$.
Then $R$ is a PCF rational map with a fixed Fatou domain $U$ containing $0$ and $R^{-1}(U)=U\cup V$, where $V$ is the Fatou domain containing $\infty$.
After tuning $R$ with $Q$ (such tuning exists by \cite{cui2018hyperbolic}) we obtain a quartic rational map
$$
g_0(z)=\frac{z^2-\nu^2}{\nu(\nu-1)z^4-(2\nu^2-2\nu-1)z^2-\nu},
$$
where $\nu=-0.4287815744562657...$ is chosen such that the even map $g_0$ has $6$ critical points $\infty$, $0$, $\pm 1$, $\pm c_0=\pm i\sqrt{1-2\nu^2}$ and two critical orbits $\infty\mapsto 0\mapsto \nu\mapsto 0$ and $\pm c_0\mapsto -1\mapsto 1\mapsto 1$. See Figure \ref{fig:exam2} for the Julia sets of $R$ and $g_0$.

\begin{figure}[htbp]
\setlength{\unitlength}{1mm}
\centering
\includegraphics[width=6.5cm]{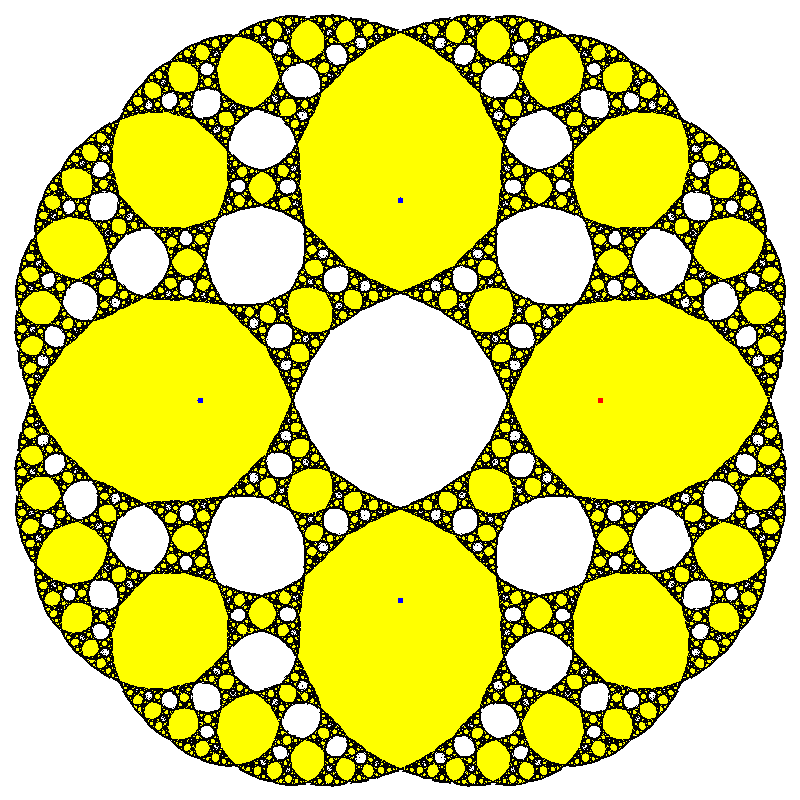} \qquad
\includegraphics[width=6.5cm]{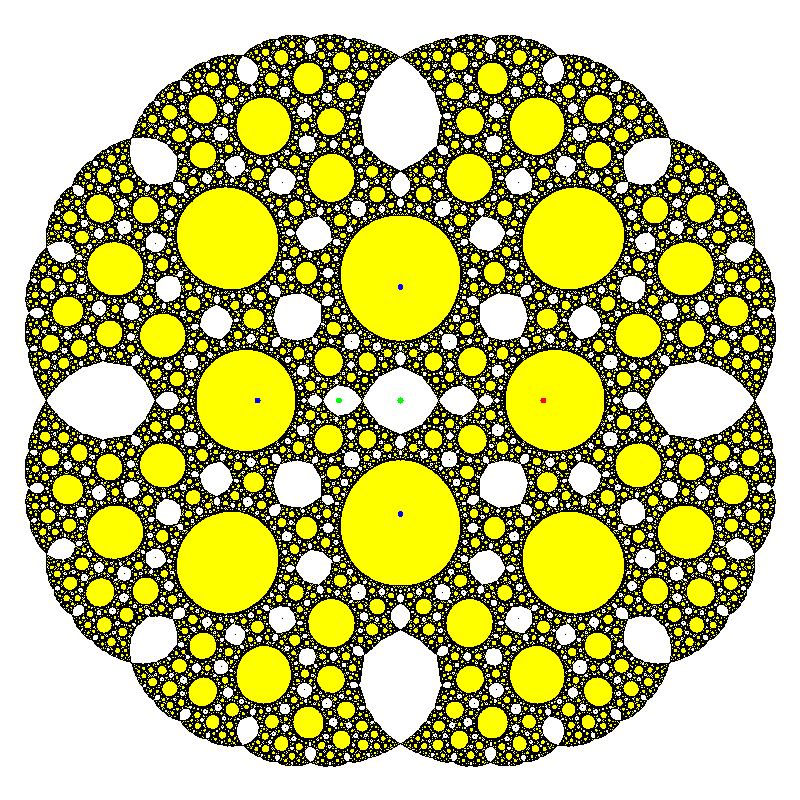}
\caption{Julia sets $\JJJ_R$ and $\JJJ_{g_0}$ in Example 2. The critical orbits are marked.}
\label{fig:exam2}
\end{figure}

\begin{figure}[htbp]
\centering
\includegraphics[width=6.5cm]{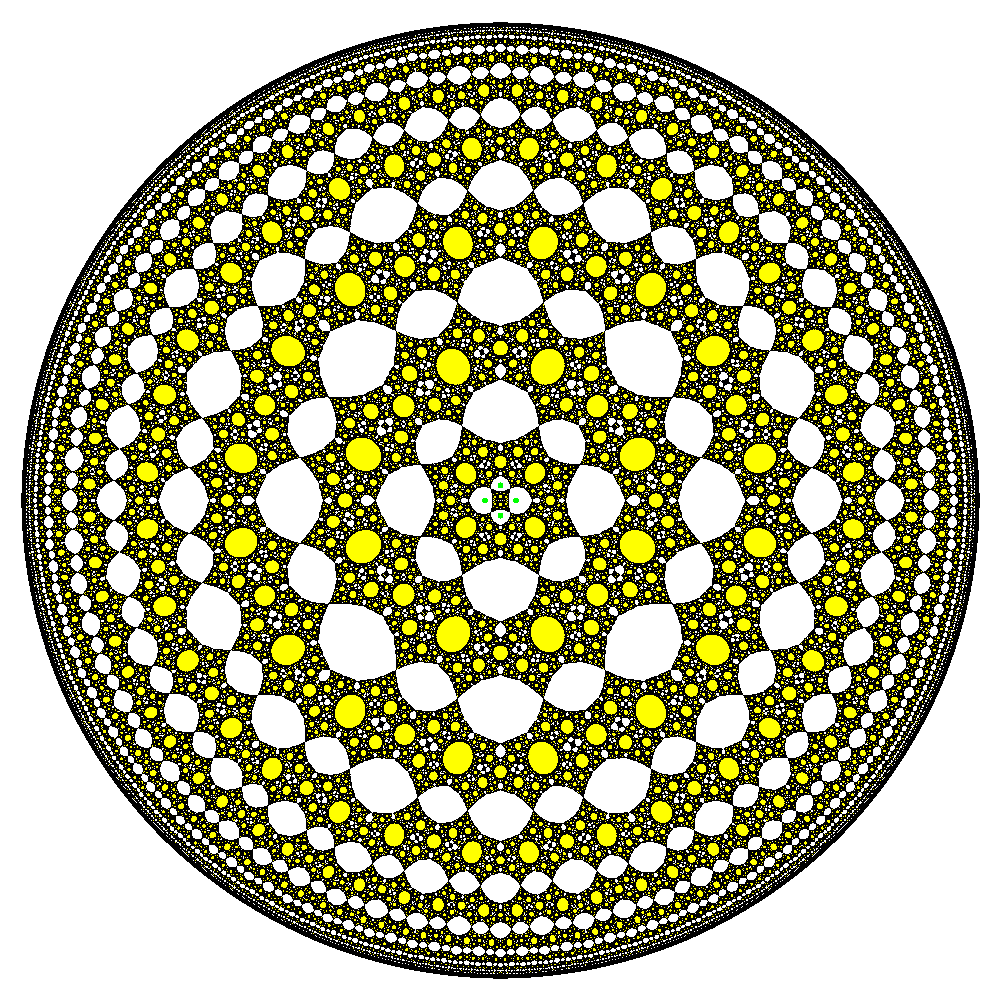} \qquad
\includegraphics[width=6.5cm]{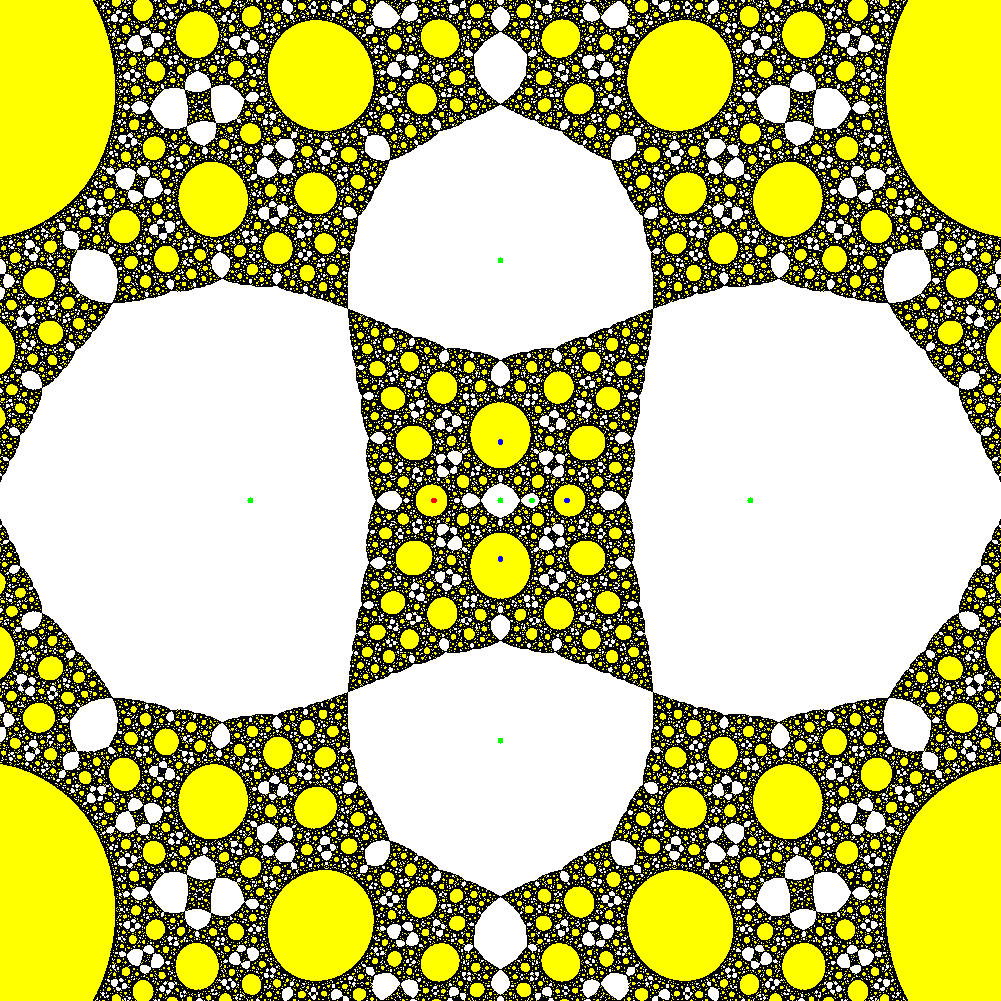}
\caption{The Julia set $\JJJ_g$ in Example 2 and its zoom in near the origin. The widths of these two pictures are $64$ and $4$ respectively.}
\label{fig:g0}
\end{figure}

By performing a disk-annulus surgery and applying Lemma \ref{lem:existence}, we obtain a rational map $g$ with a coiling Fatou domain. It has the form
$$
g(z)=\frac{c(z^2-1)^2(z^2-a^2)}{z^4+bz^2-ac},
$$
where
\begin{equation*}
\begin{split}
a&=\hskip0.33cm 0.1266022073620638...,  \quad b=-0.0469758128977771..., \\
c&=-0.0327926126839635...
\end{split}
\end{equation*}
are chosen such that $g$ has $10$ simple critical points $\infty$, $0$, $\pm 1$, $\pm c$, $\pm c_1$, $\pm c_2$ and $4$ critical orbits
$\infty\mapsto \infty$,
$\pm c\mapsto -c_1\mapsto c_1\mapsto c_1$,
$\pm 1\mapsto 0\mapsto a\mapsto 0$ and
$\pm c_2\mapsto a\mapsto 0\mapsto a$.
See Figure \ref{fig:g0} for the Julia set of $g$, where the unbounded domain is a coiled Fatou domain of $g$.

\bibliography{renorm}{}
\bibliographystyle{abbrv}
\end{document}